\newcommand{\ga}{\alpha}
\newcommand{\gb}{\beta}
\newcommand{\gd}{\delta}
\newcommand{\gep}{\epsilon}
\renewcommand{\gg}{\gamma}
\newcommand{\gl}{\lambda}
\newcommand{\gs}{\sigma}
\newcommand{\gD}{\Delta}
\newcommand{\gO}{\Omega}
\newcommand{\cA}{\mathcal{A}}
\newcommand{\cB}{\mathcal{B}}
\newcommand{\cD}{\mathcal{D}}
\newcommand{\cF}{\mathcal{F}}
\newcommand{\cG}{\mathcal{G}}
\newcommand{\cJ}{\mathcal{J}}
\newcommand{\cL}{\mathcal{L}}
\newcommand{\1}{1}
\newcommand{\C}{\mathbb{C}}
\newcommand{\N}{\mathbb{N}}
\newcommand{\R}{\mathbb{R}}
\newcommand{\p}{\partial}
\newtheorem{theorem}{Theorem}[section]
\newtheorem{lemma}[theorem]{Lemma}
\newtheorem{corollary}[theorem]{Corollary}
\newtheorem{definition}[theorem]{Definition}
\theoremstyle{remark}
\newtheorem{remark}[theorem]{Remark}
\newtheorem{remarks}[theorem]{Remarks}
\newcommand{\erf}{\mathop{\operator@font erf}\nolimits}
\newcommand{\erfc}{\mathop{\operator@font erfc}\nolimits}
\newcommand{\sign}{\mathop{\operator@font sign}\nolimits}
\newif\if@golden  \@goldentrue
\newcommand{\f@ctor}{1}
\newlength{\aiv@width}  \setlength{\aiv@width}{210mm}
\newlength{\aiv@height} \setlength{\aiv@height}{297mm}
\newlength{\tmp@width}  \setlength{\tmp@width}{\aiv@width}
\newlength{\tmp@height} \setlength{\tmp@height}{\aiv@height}
\if@golden\setlength{\textwidth}{33pc}
  \else\setlength{\textwidth}{36pc}\fi
\relax\setlength{\textwidth}{29pc}\or
\or\setlength{\textwidth}{33pc}\fi
\relax\setlength{\textwidth}{31pc}\or
\or\setlength{\textwidth}{35pc}\fi\fi
\relax\renewcommand{\f@ctor}{53}
  \or\renewcommand{\f@ctor}{46}\or\renewcommand{\f@ctor}{43}\fi
\relax\renewcommand{\f@ctor}{51}\or
  \renewcommand{\f@ctor}{45}\or\renewcommand{\f@ctor}{42}\fi\fi
\relax \renewcommand{\f@ctor}{46}
  \or\renewcommand{\f@ctor}{43}\or\renewcommand{\f@ctor}{43}\fi
\relax\renewcommand{\f@ctor}{43}
  \or\renewcommand{\f@ctor}{40}\or\renewcommand{\f@ctor}{40}\fi\fi\fi
\let\comp\circ
\newcommand{\op}{\ensuremath^\circ}
\newcommand{\cGo}{\ensuremath\cG\op}
\newcommand{\sgl}{\ensuremath\sqrt{2\gl}}
\newcommand{\sn}{\ensuremath\{1,2,\dotsc,n\}}
\newcommand{\da}{\ensuremath\downarrow}
\newcommand{\ua}{\ensuremath\uparrow}
\newcommand{\onel}[1][k]{\ensuremath1_{l_{#1}}}
\newcommand{\cond}{\ensuremath\,\big|\,}
\newcommand{\Co}{\ensuremath C_0(\cG)}
\newcommand{\Cii}{\ensuremath C_0^2(\cG)}
\newcommand{\kn}{\ensuremath k=1,\dotsc,n}
\newcommand{\Ieqref}[1]{\textup{\tagform@{I.\ref{I_#1}}}}
\newcommand{\IIeqref}[1]{\textup{\tagform@{II.\ref{II_#1}}}}
\newcommand{\pW}{\ensuremath p^w}
\newcommand{\rW}{\ensuremath r^w}
\newcommand{\RW}{\ensuremath R^w}
\newcommand{\SW}{\ensuremath S^w}
\newcommand{\TW}{\ensuremath H^w}
\newcommand{\cFW}{\ensuremath \cF^w}
\newcommand{\We}{\ensuremath W^e}
\newcommand{\ReW}{\ensuremath R^e}
\newcommand{\reW}{\ensuremath r^e}
\newcommand{\peW}{\ensuremath p^e}
\newcommand{\TeW}{\ensuremath H^e}
\newcommand{\SeW}{\ensuremath S^e}
\newcommand{\seW}{\ensuremath s^e}
\newcommand{\Ws}{\ensuremath W^s}
\newcommand{\RsW}{\ensuremath R^s}
\newcommand{\rsW}{\ensuremath r^s}
\newcommand{\psW}{\ensuremath p^s}
\newcommand{\TsW}{\ensuremath H^s}
\newcommand{\SsW}{\ensuremath S^s}
\newcommand{\cFsW}{\ensuremath \cF^s}
\newcommand{\Wg}{\ensuremath W^g}
\newcommand{\Rg}{\ensuremath R^g}
\newcommand{\rg}{\ensuremath r^g}
\newcommand{\pg}{\ensuremath p^g}
\newcommand{\Sg}{\ensuremath S^g}
\newcommand{\hgO}{\ensuremath{\hat\gO}}
\newcommand{\hP}{\ensuremath{\hat P}}
\newcommand{\hE}{\ensuremath{\hat E}}
\newcommand{\hR}{\ensuremath{\hat R}}
\newcommand{\hcA}{\ensuremath{\hat \cA}}
\newcommand{\hX}{\ensuremath{\hat X}}
\newlength{\BCs@ze}
\newlength{\BCsh@ft}
\DeclareFixedFont\MT{OMS}{cmsy}{m}{n}{\BCs@ze}    
\newcommand{\BigCart}{\ensuremath\mathop{\raisebox{\BCsh@ft}{{\MT\char"02}}}}
\numberwithin{equation}{section}
\date{February 22, 2011}
\title[Brownian Motions on Star Graphs]{%
Construction of the Paths of Brownian Motions\\
on Star Graphs}
\author[V.~Kostrykin]{Vadim Kostrykin}
\address{Vadim Kostrykin\newline
Institut f\"ur Mathematik\newline
Johannes Gutenberg--Universit\"at\newline
D--55099 Mainz, Germany}
\email{kostrykin@mathematik.uni-mainz.de}
\author[J.~Potthoff]{J\"urgen Potthoff}
\address{J\"urgen Potthoff\newline
Institut f\"ur Mathematik, Universit\"at Mannheim\newline
D--68131 Mann\-heim, Germany}
\email{potthoff@math.uni-mannheim.de}
\author[R.~Schrader]{Robert Schrader}
\address{Robert Schrader\newline
Institut f\"{u}r Theoretische Physik\newline
Freie Universit\"{a}t Berlin, Arnimallee~14\newline
D--14195 Berlin, Germany}
\email{schrader@physik.fu-berlin.de}
\subjclass[2010]{60J65, 60J45, 60H99, 58J65, 35K05, 05C99}
\keywords{Metric graphs, Brownian motion, Feller processes,
Feller's theorem}
\begin{document}
\begin{abstract}
Pathwise constructions of Brownian motions which satisfy all possible boundary
conditions at the vertex of star graphs are given.
\end{abstract}

\maketitle
\thispagestyle{empty}

\section{Introduction and Preliminaries} \label{sect_1}
In the recent years there was a growing interest in \emph{metric graphs} because of
their wide range of important applications, see, e.g., the articles
in~\cite{ExKe10} and the references given there. The simplest metric graphs
are \emph{star} or \emph{single vertex graphs}: They can be defined as a set
having a finite collection of subsets isomorphic to $\R_+$, called \emph{external edges},
where the points corresponding to the origin of $\R_+$ under these isomorphims are
identified and form the \emph{vertex} of the graph. One may visualize a star
graph as a finite number of rays in $\R^2$ going out from the origin.

On the other hand, in his pioneering articles~\cite{Fe52, Fe54, Fe54a} Feller
investigated the problem of characterization and construction of all Brownian
motions on intervals. This problem found a complete solution in the work of It\^o and
McKean~\cite{ItMc63, ItMc74}. In particular, in~\cite{ItMc63} It\^o and McKean solved
the problem of construction of the paths of all Brownian motions on the semi-line
$\R_+$ employing the theory of the local time of Brownian motion~\cite{Le48}, and
the theory of (strong) Markov processes~\cite{Bl57, Dy61, Dy65a, Dy65b, Hu56}.

Therefore it is natural to investigate Feller's problem on metric graphs, and
in particular on star graphs. In fact, the Walsh process introduced by Walsh
in~\cite{Wa78} as a generalization of the skew Brownian motion~\cite{ItMc74}
is the most basic example of a Brownian motion on a star graph, and in the
present article it will serve --- together with its local time at the vertex --- as
the main building block of our constructions. The Brownian motions constructed
here on star graphs are then the basic building blocks of Brownian motions on
general, finite metric graphs in the article~\cite{BMMG} of the present authors.

The main ideas for the construction of Brownian motions with boundary
conditions at the vertex compatible with Feller's theorem (see below,
theorem~\ref{thm_Feller}) are those which can be found in the above mentioned work
by It\^o and McKean (cf.\ also~\cite[Chapter~6]{Kn81}): The reflecting
Brownian motion in the case of $\R_+$ is replaced by a Walsh process~{\cite{Wa78}}
(cf.\ also, e.g., \cite{BaPi89}) on the single vertex graph, and then
slowing down and the killing of this process on the scale of its local time at the vertex
are used to construct processes implementing the various forms of the Wentzell boundary
condition. We provide a number of arguments which --- at least on
a technical level --- are rather different from those found in the standard
literature. For example, whenever possible, we use arguments based on Dynkin's formula
to derive the domain of the generator (i.e., the boundary conditions at the vertex). This
approach appears to be much simpler and more intuitive than the one with standard
arguments~\cite{ItMc63, ItMc74, Kn81} for the semi-line $\R_+$, which is based on
rather tricky calculation of heat kernels with the help of L\'evy's theorem.
Moreover, for the case of killing, instead of using the standard first passage time
formula for the hitting time of the vertex we use a first passage time formula for
the lifetime of the process. In the opinion of the authors this leads to much
simpler computations of the transition kernels than those in~\cite{ItMc63, ItMc74,
Kn81} for a Brownian motion on $\R_+$. In all cases we also derive explicit expressions
of the analogues of the \emph{quantum mechanical scattering matrix} on star
graphs.

The article is organized as follows. In several subsections of the present section
we set up our notation and discuss some preparatory results. In section~\ref{sect2}
we recall the construction of a Walsh process on a metric graph. In
section~\ref{sect3} we construct a Walsh process on the single vertex graph with an
elastic boundary condition at the vertex, while in section~\ref{sect4} we construct
a Walsh process with a sticky boundary condition at the vertex. The most general
Brownian motion on a star graph is obtained in section~\ref{sect5}.

\subsection{Brownian on a Star Graph and Feller's Theorem}   \label{ssect1.1}
From now we shall consider a fixed star graph $\cG$ with vertex $v$ and $n\in\N$
external edges $l_1$, \dots, $l_n$. $\cG$ is equipped with the natural metric
$d$ which is induced by the metric that each external edge inherits from being
isomorphic to $\R_+$. Thus $(\cG,d)$ is a locally compact, complete metric space,
and we shall always consider $\cG$ as equipped with its Borel $\gs$--algebra.
$\cG^\circ$ denotes the set $\cG\setminus\{v\}$ which we also call --- by abuse
of language --- the \emph{open interior} of $\cG$. It is the disjoint union of $n$
copies $l_k^\circ$, $k=1$, \dots, $n$, of the interval $(0,+\infty)$. Every
$\xi\in\cG^\circ$ is in one-to-one correspondence with its \emph{local coordinates}
$(k,x)$, where $k\in\{1,\dotsc,n\}$ is the index of the edge $\xi$ belongs to, and
$x>0$ denotes the distance of $\xi$ to $v$. For simplicity we shall often write
$\xi=(k,x)$.

The following definition of a Brownian motion on $\cG$ is the generalization of
the definition of a Brownian motion on the half line $\R_+$ as
given by Knight in~\cite[Chapter~6]{Kn81}.

\begin{definition}	\label{def_BM}
A \emph{Brownian motion} $X=(X_t,\,t\in\R_+)$ on $\cG$ is a diffusion process on
$\cG$, such that $X$ with absorption at $v$ is equivalent to a Brownian motion
on the half line $\R_+$ with absorption at the origin.
\end{definition}

\begin{remarks}	\label{rem_BM}
By a diffusion process we mean a strong Markov process (e.g., in the sense of~\cite{BlGe68}),
which a.s.\ has c\`adl\`ag paths and a.s.\ the paths are continuous on $[0,\zeta)$, where
$\zeta$ is its lifetime. Moreover, in definition~\ref{def_BM} we have --- as we shall
usually do without any danger of confusion --- identified the external leg
$l_k$, $k=1$, \dots, $n$, on which the process starts with the corresponding copy
of $\R_+$. Throughout we shall assume without loss of generality that the filtration
of a Brownian motion on $\cG$ satisfies the ``usual conditions'', i.e., it is right
continuous and complete relative to the underlying family $(P_\xi,\,\xi\in\cG)$
of probability measures.
\end{remarks}

$\Co$ denotes the Banach space of continuous functions on $\cG$ which vanish at infinity
equipped with the sup-norm.

Define $\Cii$ as the subspace of $\Co$ consisting of those
functions $f\in\Co$ which are twice continuously differentiable on $\cGo$, such that
$f''$ extends from $\cG^\circ$ to $\cG$ as a function in $\Co$. The following
lemma states some of the properties of functions in $\Cii$. It can be proved
with the help of applications of the fundamental theorem of calculus and the
mean value theorem, and the proof is omitted here.

\begin{lemma}	\label{lemC02}
Suppose that $f\in\Cii$, $k\in\{1,\dotsc,n\}$. Then the limit of $f'(\xi)$ as
$\xi$ converges to $v$ along the open edge $l_k^\circ$ exists. The directional derivatives
$f^{(i)}(v_k)$, $i=1$, $2$, of first and second order of $f$ at the vertex $v$ in
direction of the edge $l_k$ exist, and the equalities
\begin{equation*}
	f^{(i)}(v_k) = \lim_{\xi\to v,\,\xi\in l_k^\circ} f^{(i)}(\xi),\qquad
											i=1,\,2,
\end{equation*}
hold true. Moreover, $f'$ vanishes at infinity.
\end{lemma}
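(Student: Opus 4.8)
The plan is to work one edge at a time. Fix $k$ and identify the edge $l_k$ with $[0,\infty)$, the vertex $v$ corresponding to the origin; write $f_k(x):=f(k,x)$ for the restriction of $f$ to this edge. By the definition of $\Cii$ the function $f_k$ is continuous on $[0,\infty)$, twice continuously differentiable on $(0,\infty)$, satisfies $f_k(x)\to 0$ as $x\to\infty$, and its second derivative $f_k''$ admits a continuous extension to $[0,\infty)$ that likewise vanishes at infinity. All three assertions thereby reduce to statements about the single function $f_k$.

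For the existence of $\lim_{x\to 0^+}f_k'(x)$ I would invoke the fundamental theorem of calculus: for $0<a<b$ one has $f_k'(a)=f_k'(b)-\int_a^b f_k''(t)\,dt$. Since $f_k''$ extends continuously to the compact interval $[0,b]$, the integral converges to $\int_0^b f_k''(t)\,dt$ as $a\to 0^+$, so $\lim_{a\to 0^+}f_k'(a)$ exists; call its value $L$. This proves the first claim and identifies the candidate $f^{(1)}(v_k)=L$. The two equalities of the second claim then follow from the mean value theorem applied in succession. For $i=1$: the MVT for $f_k$ on $[0,x]$ gives $\xi_x\in(0,x)$ with $(f_k(x)-f_k(0))/x=f_k'(\xi_x)$, and letting $x\to 0^+$ forces $\xi_x\to 0^+$, so the difference quotient defining the one-sided directional derivative tends to $L=\lim_{\xi\to v}f'(\xi)$. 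For $i=2$: the extended $f_k'$ is continuous on $[0,x]$ (with value $L$ at the origin) and differentiable on $(0,x)$, so the MVT for $f_k'$ yields $\eta_x\in(0,x)$ with $(f_k'(x)-L)/x=f_k''(\eta_x)$, and letting $x\to 0^+$ gives $f^{(2)}(v_k)=\lim_{\xi\to v}f''(\xi)$.

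The final assertion, that $f_k'\to 0$ at infinity, is the only step that is not a one-line application of the fundamental theorem of calculus or the mean value theorem, and is where I expect the genuine work; it is an interpolation-type estimate bounding the first derivative in terms of the function and the second derivative. Given $\epsilon>0$, choose $X$ so large that $|f_k(t)|<\epsilon$ and $|f_k''(t)|<\epsilon$ for all $t\ge X$, which is possible since both quantities vanish at infinity. For $x\ge X$, the MVT on $[x,x+1]$ produces $c\in(x,x+1)$ with $f_k'(c)=f_k(x+1)-f_k(x)$, whence $|f_k'(c)|\le 2\epsilon$; then the fundamental theorem of calculus gives $f_k'(x)=f_k'(c)-\int_x^{c}f_k''(t)\,dt$, and since $c-x<1$ we obtain $|f_k'(x)|\le 2\epsilon+\epsilon=3\epsilon$. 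As $\epsilon$ was arbitrary, $f_k'(x)\to 0$ as $x\to\infty$.

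Since the edge index $k\in\{1,\dotsc,n\}$ was arbitrary, the three assertions hold on every edge, completing the proof. The only conceptual subtlety is the interpretation of the second-order directional derivative $f^{(2)}(v_k)$ at the vertex as the one-sided derivative of the (continuously extended) first derivative $f_k'$; with this reading the mean value theorem argument above applies verbatim, and the main obstacle is really confined to the interpolation estimate of the last paragraph, all other steps being routine.
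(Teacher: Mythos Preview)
Your proof is correct and follows precisely the approach the paper indicates: the paper omits the proof but states that it ``can be proved with the help of applications of the fundamental theorem of calculus and the mean value theorem,'' which is exactly what you do---FTC for the existence of $\lim_{x\to 0^+}f_k'(x)$ and the vanishing at infinity, MVT for identifying the directional derivatives with these limits. Your interpolation argument for $f'\to 0$ at infinity is the standard one and fills in the only step that is not entirely routine.
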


\begin{remark}	\label{remC02}
By definition we have that for every $f\in\Cii$ and all $j$, $k=1$, \dots, $n$,
$f''(v_j) = f''(v_k)$, and henceforth we shall simply write $f''(v)$ for this
quantity. On the other hand, in general $f'(v_j)\ne f'(v_k)$ for $j\ne k$.
\end{remark}

It is not hard to show that every Brownian motion on a star graph is a Feller process.
A convenient way to prove this is to show that its resolvent maps $C_0(\cG)$ into itself
by arguments similar to those in~\cite[Section~3.6]{ItMc74},
and to observe that the path properties imply for all $\xi\in\cG$,
$f\in\Co$, $P_t f(\xi)$ converges to $f(\xi)$ as $t$ decreases to $0$, where
$(P_t,\,t\in\R_+)$ denotes the semigroup generated by the Brownian motion.
Then one can use well-known arguments (for example, a complete proof can be found
in~\cite{NFSG}) to conclude the Feller property in its usual form, e.g., \cite{ReYo91}.

The analogue of Feller's theorem~\cite[Theorem~6.2]{Kn81} for a Brownian
motion on the single vertex graph $\cG$ reads as follows:

\begin{theorem}	\label{thm_Feller}
Assume that $X$ is a Brownian motion on $\cG$. Then there exist constants
$a$, $b_k$, $c\in[0,1]$, $k=1$, \dots, $n$, with
\begin{subequations}    \label{eq1.1}
\begin{equation}    \label{eq1.1a}
    a + c + \sum_{k=1}^n b_k = 1,\quad a\ne 1,
\end{equation}
such that the domain $\cD(A)$ of the generator $A$ of $X$ in $\Co$ consists exactly of those
$f\in\Cii$ for which
\begin{equation}    \label{eq1.1b}
    a f(v) + \frac{c}{2}\,f''(v) = \sum_{k=1}^n b_k f'(v_k)
\end{equation}
\end{subequations}
holds true. Moreover, for $f\in\cD(A)$, $Af = 1/2 f''$.
\end{theorem}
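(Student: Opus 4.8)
The plan is to characterize the domain of the generator by applying Dynkin's formula, which is the approach the authors announce in the introduction. Since $X$ is already known to be a Feller process, its generator $A$ generates the semigroup $(P_t)$, and Dynkin's formula states that for a suitable stopping time $\tau$ and $f\in\cD(A)$,
\begin{equation*}
    E_\xi\big[f(X_\tau)\big] - f(\xi) = E_\xi\Big[\int_0^\tau Af(X_s)\,ds\Big].
\end{equation*}
First I would establish that on the open interior $\cGo$ the process behaves exactly like one-dimensional Brownian motion on each edge, so that away from the vertex $A$ acts as $\tfrac12 f''$ and $\cD(A)\subseteq\Cii$; this uses that $X$ with absorption at $v$ is equivalent to Brownian motion on $\R_+$ with absorption at the origin (Definition~\ref{def_BM}), together with lemma~\ref{lemC02} to guarantee that functions in $\cD(A)$ have the requisite smoothness and one-sided derivatives $f'(v_k)$ at the vertex. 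The content of the theorem is therefore concentrated in the single scalar boundary condition~\eqref{eq1.1b} relating $f(v)$, $f''(v)$, and the weighted sum of the directional derivatives $f'(v_k)$.

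To extract that boundary condition, I would apply Dynkin's formula with the exit time $\tau_\gep$ from a small neighborhood of the vertex, or more precisely examine the behaviour of the process started at the vertex and run until it exits a ball of radius $\gep$. The idea is to test $P_t f(v) - f(v)$ as $t\da 0$ against the known excursion structure of the Walsh process at $v$: the process spends time at the vertex measured by its local time, may be killed there (contributing the $a\,f(v)$ term, an absorption/killing rate), may be held there on the local-time scale (contributing the stickiness term $\tfrac{c}{2}f''(v)$), and otherwise is pushed out along edge $l_k$ with weight $b_k$ (contributing $b_k f'(v_k)$). Concretely, one decomposes the infinitesimal change of $E_v[f(X_t)]$ into the part coming from diffusion in the interior and the part coming from the vertex, and matches coefficients. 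The constraint $a+c+\sum_k b_k = 1$ with $a\ne 1$ then emerges as a normalization of these three mutually exclusive behaviours (killing, stickiness, and the total outward flux), and the nonnegativity $a,b_k,c\in[0,1]$ follows because each represents a probability or a nonnegative rate. The final claim $Af=\tfrac12 f''$ for $f\in\cD(A)$ is immediate from the interior analysis once the boundary condition is shown to hold.

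The main obstacle I anticipate is the rigorous identification of the three boundary coefficients and the proof that $f''(v)$ is the correct quantity appearing in the sticky term. The difficulty is that $f''(v)$ is a second-order quantity, so one must control the $t\da 0$ asymptotics of $E_v[f(X_t)]$ to second order at the vertex and show that the contributions organize into precisely the linear combination in~\eqref{eq1.1b}; this requires a careful analysis of the time the process spends near $v$ on the scale of its local time, and an argument that no other boundary terms (e.g.\ jump or drift contributions) can appear, which is where the diffusion hypothesis and the equivalence with absorbed Brownian motion on $\R_+$ must be used decisively. A secondary technical point is verifying that every $f\in\Cii$ satisfying~\eqref{eq1.1b} actually lies in $\cD(A)$ (the reverse inclusion), which I would handle by a range/uniqueness argument: show that $(\gl - \tfrac12 \tfrac{d^2}{dx^2})$ with this boundary condition has the full resolvent range in $\Co$, so that the operator defined by the boundary condition is a closed extension coinciding with the Feller generator by the Hille--Yosida/maximality of dissipative generators.
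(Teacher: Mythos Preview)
The paper does not give its own proof of this theorem: immediately after the statement it says that the proof in Knight~\cite{Kn81} for $\cG=\R_+$ ``is readily modified for a general star graph $\cG$'', and alternatively refers to the general metric-graph version in the companion paper~\cite{BMMG}. So there is no in-paper argument to compare against, and the Dynkin-formula remark in the introduction that you cite refers to a different task --- verifying, in sections~\ref{sect2}--\ref{sect5}, that each \emph{explicitly constructed} process (Walsh, elastic, sticky, general) satisfies its intended boundary condition --- not to proving the classification theorem~\ref{thm_Feller} itself.

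Your sketch has a genuine gap of this kind. You propose to read off the coefficients $a,b_k,c$ by testing $P_t f(v)-f(v)$ ``against the known excursion structure of the Walsh process at $v$'': local time, killing on the local-time scale, stickiness, and edge weights. But at this point in the argument $X$ is an \emph{arbitrary} Brownian motion on $\cG$ in the sense of definition~\ref{def_BM}; you do not yet know that $v$ is regular, that a local time exists, or that the process decomposes into Walsh-type excursions with well-defined edge probabilities. Those structural facts are precisely what the later sections \emph{construct} to realize each boundary condition; assuming them here to derive the boundary condition is circular. Dynkin's formula with the exit time $\tau_\gep$ will indeed give you $Af(v)$ as a limit of $\bigl(E_v[f(X_{\tau_\gep})]-f(v)\bigr)/E_v[\tau_\gep]$, but to turn that into~\eqref{eq1.1b} you must show that the numerator, as a functional of $f$, depends only on $(f(v),f'(v_1),\dots,f'(v_n),f''(v))$ with nonnegative coefficients independent of $f$, and your outline does not supply that step.

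The route the paper points to (Knight's argument) avoids this by working abstractly with the resolvent: one shows $\cD(A)\subset\Cii$, observes that $\cD(A)$ is a hyperplane in $\Cii$ (one linear constraint coming from the single vertex), and identifies the defining linear functional via a positivity/maximum-principle argument, which forces the signs and yields the normalization~\eqref{eq1.1a}. If you want to salvage a Dynkin-style proof, you would first need to establish --- without assuming Walsh structure --- enough about the exit distribution from $B_\gep(v)$ and the asymptotics of $E_v[\tau_\gep]$ for a general $X$ to extract the linear functional; that is essentially redoing Knight's analysis in disguise.
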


The proof in~\cite{Kn81} for the case where $\cG$ has only one external edge,
i.e., $\cG=\R_+$, is readily modified for a general star graph $\cG$. On the other
hand, theorem~\ref{thm_Feller} also follows from Feller's theorem in the case of
a general metric graph~\cite[Theorem~1.3]{BMMG}.

\subsection{Standard Brownian Motion on the Real Line}   \label{ssect1.2}
The construction of Brownian motions on a single vertex graph with infinitesimal generator
whose domain consists of functions $f$ which satisfy the
boundary conditions~\eqref{eq1.1} is quite similar to the construction carried out
for the half-line in~\cite{ItMc63}, \cite{ItMc74}, \cite{Kn81}. This in turn is
based on the properties of a standard Brownian motion on the real line, cf., e.g.,
\cite{Fe71, Hi80, IkWa89, KaSh91, ReYo91, Wi79}, and the works cited above. For the
convenience of the reader, and for later reference, we collect the pertinent
notions, tools and results here.

Let $(Q_x,\,x\in\R)$ denote a family of probability measures on a measurable space
$(\gO',\cA')$, and let $B=(B_t,\,t\in\R_+)$ denote a standard Brownian motion
defined on $(\gO',\cA')$ with $Q_x(B_0=x)=1$, $x\in\R$. It will be convenient
to assume throughout that $B$ exclusively has continuous paths. Whenever it is notationally
convenient, we shall also write $B(t)$ for $B_t$, $t\ge 0$. Furthermore, we may
suppose that there is a shift operator $\theta: \R_+\times\gO \to\gO$, such that for
all $s$, $t\ge 0$, $B_s\comp\theta_t=B_{s+t}$.

We shall always understand the Brownian family $(B, (Q_x,\,x\in\R))$ to be equipped
with a filtration $\cJ=(\cJ_t,\,t\ge 0)$ which is right continuous and complete for
the family $(Q_x,\,x\in\R)$. (For example, $\cJ$ could be chosen as the usual
augmentation of the natural filtration of $B$ (e.g., \cite[Sect.~2.7]{KaSh91} or
\cite[Sect.'s~I.4, III.2]{ReYo91}).)

For any $A\subset\R$, we denote by $H^B_A$ the hitting time of $A$ by $B$,
\begin{equation}    \label{eq1.2}
    H^B_A = \inf\{t>0,\,B_t\in A\},
\end{equation}
and we note that for all $A$ belonging to the Borel $\gs$--algebra $\cB(\R)$ of
$\R$, $H^B_A$ is a stopping time with respect to $\cJ$ (e.g.,
\cite[Theorem~III.2.17]{ReYo91}). In the case where $A=\{x\}$, $x\in\R$, we also
simply write $H^B_x$ for $H^B_{\{x\}}$. We shall also denote these stopping times by
$H^B(A)$ and $H^B(x)$, respectively, whenever it is typographically more convenient.
The following particular cases deserve special attention. Let $x\in\R$. Then we have
(e.g., \cite[Sect.~1.7]{ItMc74}, \cite[Sect.~2.6]{KaSh91}, \cite[Sect.'s~II.3,
III.3]{ReYo91})
\begin{equation}    \label{eq1.3}
    Q_0(H^B_x\in dt) = Q_x(H^B_0\in dt) = \frac{|x|}{t}\,g(t,x)\,dt,\qquad t>0,
\end{equation}
where $g$ is the Gau{\ss}-kernel
\begin{equation}    \label{eq1.4}
    g(t,x) = \frac{1}{\sqrt{2\pi t}}\,e^{-x^2/2t},\qquad t>0,\,x\in\R,
\end{equation}
and
\begin{equation}    \label{eq1.5}
    E^Q_0\bigl(e^{-\gl H^B_x}\bigr)
             = E^Q_x\bigl(e^{-\gl H^B_0}\bigr) = e^{-\sgl |x|},\qquad \gl>0.
\end{equation}
Moreover, for $a<x<b$ the law of $H^B_{\{a,b\}}$ under $Q_x$ is well-known (e.g.,
\cite[Problem~6, Sect.~1.7]{ItMc74}), and its expectation is given by
\begin{equation}    \label{eq1.6}
    E^Q_x\bigl(H^B_{\{a,b\}}\bigr) = (x-a)(b-x).
\end{equation}

Denote by $L^B=(L^B_t,\,t\ge 0)$ the local time of $B$ at zero, where we choose the
normalization as in, e.g., \cite{ReYo91} (and which thus differs by a factor $2$  from
the one used in, e.g., \cite{IkWa89, KaSh91}): for $x\in\R$, $P_x$--a.s.\
\begin{equation}    \label{eq1.7}
    L^B_t = \lim_{\gep\da 0}\,\frac{1}{2\gep}\,
            \gl\bigl(\bigl\{s\le t,\,|B_s|\le \gep\bigr\}\bigr), \qquad t\ge 0,
\end{equation}
and here $\gl$ denotes the Lebesgue measure. Thus, in terms of its $\ga$--potential
(cf.~\cite[Theorem~V.3.13]{BlGe68}) we have
\begin{equation}    \label{eq1.7a}
    u^\ga_{L^B}(x)
        = E_x\Bigl(\int_0^\infty e^{-\ga t}\,dL^B_t\Bigr)
        = \frac{1}{\sqrt{2\ga}}\,e^{-\sqrt{2\ga}|x|},\qquad \ga>0,\,x\in\R,
\end{equation}
which provides an efficient way to compare the various normalizations of the local time
used in the literature. Slightly informally we can write
\begin{equation}    \label{eq1.8}
    L^B_t = \int_0^t \gd_0(B_s)\,ds,
\end{equation}
where $\gd_0$ is the Dirac distribution concentrated at $0$. $L^B$ is adapted to $\cJ$,
and non-decreasing. Moreover, for every $x\in\R$, $P_x$--a.s.\ the paths of $L^B$ are
continuous, and $L^B$ is additive in the sense that
\begin{equation}    \label{eq1.9}
    L^B_{t+s} = L^B_t + L^B_s\comp\theta_t,\qquad s,t\in\R_+.
\end{equation}
Similarly as above, we shall occasionally take the notational freedom to rewrite
$L^B_t$ as $L^B(t)$.

We will need the following well-known result (e.g., \cite[Section~2.2,
Problem~3]{ItMc74}):

\begin{lemma}   \label{lem1.1}
The joint law of $|B_t|$ and $L_t$, $t>0$,  under $Q_0$ is given by
\begin{equation}    \label{eq1.10}
    Q_0\bigl(|B_t|\in dx,\,L^B_t\in dy\bigr)
        = 2\,\frac{x+y}{\sqrt{2\pi t^3}}\,e^{-(x+y)^2/2t}\,dx\,dy,\qquad x,y\ge 0.
\end{equation}
\end{lemma}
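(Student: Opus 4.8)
The plan is to reduce the computation to the joint law of a Brownian motion and its running maximum by means of P.~L\'evy's identity in law, and then to invoke the reflection principle. With the normalization of local time fixed in~\eqref{eq1.7}--\eqref{eq1.7a} (namely the one of~\cite{ReYo91}), Tanaka's formula reads $|B_t| = \int_0^t \sign(B_s)\,dB_s + L^B_t$, so that $|B|-L^B$ is a martingale. Under this hypothesis L\'evy's theorem (e.g.~\cite[Theorem~VI.2.3]{ReYo91}) asserts that the two-dimensional processes
\[
  \bigl(|B_t|,\,L^B_t\bigr)_{t\ge 0}
  \quad\text{and}\quad
  \bigl(S_t-B_t,\,S_t\bigr)_{t\ge 0},
  \qquad S_t:=\sup_{0\le s\le t}B_s,
\]
have the same law under $Q_0$. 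Hence it suffices to determine, for fixed $t>0$, the joint law of the pair $(S_t-B_t,\,S_t)$ under $Q_0$, and this is where the reflection principle enters.

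First I would record the classical joint law of $(B_t,S_t)$. By reflecting the path of $B$ at the first passage time of a level $b>0$, one obtains, for $b>0$ and $a\le b$, the identity $Q_0(S_t\ge b,\,B_t\le a)=Q_0(B_t\ge 2b-a)$; differentiating once in $a$ and once in $b$ (and using $\partial_u g(t,u)=-(u/t)\,g(t,u)$ with the Gau{\ss}-kernel~\eqref{eq1.4}) yields
\[
  Q_0\bigl(B_t\in da,\,S_t\in db\bigr)
    = \frac{2(2b-a)}{\sqrt{2\pi t^3}}\,e^{-(2b-a)^2/2t}\,da\,db,
    \qquad b>0,\ a\le b .
\]
This step is purely a reflection-principle computation and can alternatively be cited from~\cite[Sect.~III.3]{ReYo91}.

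Next I would change variables. Setting $x=S_t-B_t$ and $y=S_t$, i.e.\ $a=y-x$ and $b=y$, the admissible region $\{b>0,\ a\le b\}$ becomes $\{x,y>0\}$, the Jacobian of the transformation equals $1$, and $2b-a=x+y$. Substituting into the density above gives
\[
  Q_0\bigl(S_t-B_t\in dx,\,S_t\in dy\bigr)
    = \frac{2(x+y)}{\sqrt{2\pi t^3}}\,e^{-(x+y)^2/2t}\,dx\,dy,
    \qquad x,y>0 ,
\]
and by the L\'evy identity the left-hand side equals $Q_0(|B_t|\in dx,\,L^B_t\in dy)$, which is precisely~\eqref{eq1.10}. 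As a consistency check, integrating out $x$ via the substitution $u=x+y$ returns the marginal $Q_0(L^B_t\in dy)=\tfrac{2}{\sqrt{2\pi t}}\,e^{-y^2/2t}\,dy$, the half-normal law, in agreement with $L^B_t\overset{d}{=}S_t\overset{d}{=}|B_t|$.

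The only genuinely delicate point is the bookkeeping of the local-time normalization: L\'evy's theorem produces $S_t$ (rather than a constant multiple of it) exactly for the normalization making $|B|-L^B$ a martingale, and I would verify that this is the normalization fixed in the paper by checking that its $\ga$--potential at the vertex, $u^\ga_{L^B}(0)=1/\sqrt{2\ga}$ from~\eqref{eq1.7a}, is the one associated with Tanaka's formula. Once this identification is in place the remaining steps are routine, so the main obstacle is conceptual (matching normalizations and citing L\'evy's theorem in the correct form) rather than computational; alternatively one could quote the formula directly from~\cite[Sect.~2.2, Problem~3]{ItMc74} and rescale $y$ by the factor $2$ that separates the two conventions.
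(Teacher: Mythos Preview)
Your argument is correct. The paper, however, does not give a proof of this lemma at all: it is introduced with ``We will need the following well-known result (e.g., \cite[Section~2.2, Problem~3]{ItMc74})'' and then simply stated. So there is nothing to compare against; you have supplied a full proof where the authors only cite one.

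The route you chose --- L\'evy's identification of $(|B|,L^B)$ with $(S-B,S)$ followed by the reflection-principle density of $(B_t,S_t)$ and a linear change of variables --- is exactly the standard derivation, and your normalization check is on point: the paper's convention~\eqref{eq1.7a} is the Revuz--Yor/Tanaka normalization, so L\'evy's theorem applies with no rescaling. Your closing remark about quoting It\^o--McKean and adjusting by a factor of~$2$ is precisely what the paper is implicitly doing. One minor quibble: the admissible region after the change of variables is $\{x\ge 0,\,y\ge 0\}$ rather than $\{x,y>0\}$, since $a=b$ (i.e.\ $x=0$) and $b\downarrow 0$ are boundary cases with zero mass; this does not affect the density formula.
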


Let $K^B = (K^B_r,\,r\ge 0)$ denote the right continuous pseudo-inverse of $L$,
\begin{equation}    \label{eq1.11}
    K^B_r = \inf\bigl\{t\ge 0,\,L^B_t>r\bigr\},\qquad r\ge 0.
\end{equation}
Note that due to the a.s.\ continuity of $L^B$ we have a.s.\ $L^B_{K_r} = r$. In
appendix~B of~\cite{BMMG0} the present authors proved the following

\begin{lemma}   \label{lem1.2}
For any $r\ge 0$
\begin{equation}    \label{eq1.12}
    Q_0\bigl(K^B_r\in dt\bigr) = \frac{r}{t}\,g(t,r)\,dt,\qquad t>0,
\end{equation}
and
\begin{equation}    \label{eq1.13}
    E^Q_0\bigl(e^{-\gl K^B_r}\bigr) = e^{-\sgl r},\qquad \gl>0
\end{equation}
holds.
\end{lemma}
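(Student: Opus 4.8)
The plan is to compute the distribution function of $K^B_r$ directly from the joint law of Lemma~\ref{lem1.1} and then differentiate; the exponential formula \eqref{eq1.13} will afterwards drop out by comparison with the first--passage law \eqref{eq1.3}, reflecting the fact that the inverse local time is the same stable subordinator as the family of first--passage times.

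First I would translate the event $\{K^B_r<t\}$ into a statement about $L^B_t$ alone. Since $L^B$ is continuous, non-decreasing, and starts at $0$, one has $K^B_r<t$ precisely when $L^B_s>r$ for some $s<t$, and by continuity this occurs if and only if $L^B_t>r$. Thus $\{K^B_r<t\}=\{L^B_t>r\}$ is an exact identity, whence $Q_0(K^B_r<t)=Q_0(L^B_t>r)$.

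Next I would read off the marginal law of $L^B_t$ from \eqref{eq1.10}: integrating the joint density over $x\in[0,\infty)$ with the substitution $u=x+y$ gives $Q_0(L^B_t\in dy)=2\,g(t,y)\,dy$ for $y\ge 0$ (so $L^B_t$ has the half-Gaussian law of $|B_t|$). Hence $Q_0(K^B_r<t)=\int_r^\infty 2\,g(t,y)\,dy$. Differentiating in $t$ and using that the Gauß kernel solves the heat equation $\partial_t g=\tfrac12\,\partial_y^2 g$, I would obtain
\[
  \frac{d}{dt}\int_r^\infty 2\,g(t,y)\,dy
   =\int_r^\infty \partial_y^2 g(t,y)\,dy
   =-\,\partial_y g(t,r)
   =\frac{r}{t}\,g(t,r),
\]
where the last two equalities use $\partial_y g(t,y)=-(y/t)\,g(t,y)$, which also makes the boundary term at $y=\infty$ vanish. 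This is exactly \eqref{eq1.12}.

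Finally, for \eqref{eq1.13} I would simply observe that the density just found coincides, upon replacing $x$ by $r>0$, with the first--passage density $Q_0(H^B_x\in dt)=\tfrac{|x|}{t}\,g(t,x)\,dt$ of \eqref{eq1.3}; so $K^B_r$ has under $Q_0$ the law of $H^B_0$ under $Q_r$, and its Laplace transform is read off from \eqref{eq1.5} as $E^Q_0(e^{-\gl K^B_r})=e^{-\sgl r}$. The computation is essentially routine; the only points requiring a little care—hardly a genuine obstacle—are the exact event identity of the first step and the justification of differentiating under the integral sign, which follows from the explicit Gaussian bounds. One may also check that $\int_r^\infty 2\,g(t,y)\,dy\to 1$ as $t\to\infty$, confirming that $K^B_r<\infty$ almost surely so that \eqref{eq1.12} is indeed a probability density.
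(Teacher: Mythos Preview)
Your proof is correct. The event identity $\{K^B_r<t\}=\{L^B_t>r\}$ is exactly the relation the paper records as~\eqref{eq_set_K}, the marginal computation from Lemma~\ref{lem1.1} is straightforward, and the heat-equation trick for differentiating the tail integral is clean and valid given the Gaussian decay. Deducing~\eqref{eq1.13} by identifying the resulting density with the first-passage density~\eqref{eq1.3} is efficient.

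As for comparison: the paper does not actually prove Lemma~\ref{lem1.2} here. It states the result and defers the argument to appendix~B of the companion paper~\cite{BMMG0}, so there is no in-text proof to compare against. Your argument is self-contained, uses only ingredients already present in this paper (Lemma~\ref{lem1.1}, formulae~\eqref{eq1.3}--\eqref{eq1.5}), and would serve perfectly well as an inline proof. One minor stylistic point: since equation~\eqref{eq_set_K} is stated later in the paper (in subsection~\ref{ssect1.5}) for a general local time, you might simply cite it rather than re-deriving the event identity, though your justification via continuity and monotonicity of $L^B$ is of course fine.
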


Moreover, we shall make use of the following lemma, which is similar to results
in Section~6.4 of~\cite{KaSh91}, and which is proved in appendix~B
of~\cite{BMMG0}, too.

\begin{lemma}   \label{lem1.3}
Under $Q_0$, $L^B\bigl(H^B_{\{-x,+x\}}\bigr)$, $x>0$, is exponentially distributed
with mean~$x$.
\end{lemma}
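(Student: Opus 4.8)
The plan is to reduce the statement to a computation about reflected Brownian motion and then to read off the distribution of $L^B\bigl(H^B_{\{-x,+x\}}\bigr)$ from its Laplace transform, which I will produce by an exponential martingale. First I would observe that $H^B_{\{-x,+x\}}=\inf\{t>0:|B_t|=x\}$, so it is natural to pass to the reflected Brownian motion $R_t:=|B_t|$. By Tanaka's formula one has $R_t=\gb_t+L^B_t$, where $\gb_t=\int_0^t\sign(B_s)\,dB_s$ is a standard Brownian motion (Lévy's characterization) and the increasing continuous process $L^B$ is exactly the semimartingale local time of $B$ at $0$; this is the object normalized in~\eqref{eq1.7}, and the measure $dL^B$ is carried by $\{t:R_t=0\}$. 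In these terms the target variable is $L^B(H)$ with $H:=\inf\{t>0:R_t=x\}$, and I want to prove $E^Q_0\bigl(e^{-\mu L^B(H)}\bigr)=(1+\mu x)^{-1}$ for every $\mu>0$.

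The point of this target is that $(1+\mu x)^{-1}$ is precisely the Laplace transform of the exponential law with mean~$x$, so once it is established, uniqueness of Laplace transforms gives the claim. Heuristically the exponential law is forced by excursion theory: the event $\{L^B(H)>s\}$ is the event that $R$ stays in $[0,x)$ until local time $s$, i.e.\ that no excursion of $B$ away from $0$ has reached level $x$ before local time $s$; by the additivity~\eqref{eq1.9} and the strong Markov property this probability is multiplicative in $s$, hence of the form $e^{-\rho s}$. Rather than computing the excursion rate $\rho$ by hand, I would capture it cleanly through a martingale.

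The key step is to guess the right martingale. For $\mu>0$ put $\phi(r)=1+\mu r$ and
$Y_t:=e^{-\mu L^B_t}\,\phi(R_t)=e^{-\mu L^B_t}\,(1+\mu R_t)$. Applying It\^o's formula and using $dR_t=d\gb_t+dL^B_t$, $d\langle R\rangle_t=dt$, and the ordinary chain rule for the bounded-variation term $e^{-\mu L^B_t}$, one gets
$dY_t=e^{-\mu L^B_t}\phi'(R_t)\,d\gb_t+e^{-\mu L^B_t}\bigl(-\mu\,\phi(R_t)+\phi'(R_t)\bigr)\,dL^B_t+\tfrac12 e^{-\mu L^B_t}\phi''(R_t)\,dt$.
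The $dt$ term vanishes because $\phi$ is affine, and since $dL^B$ lives on $\{R=0\}$ the $dL^B$ term collects $\bigl(-\mu\,\phi(0)+\phi'(0)\bigr)\,dL^B_t=(-\mu+\mu)\,dL^B_t=0$. Thus $dY_t=\mu\,e^{-\mu L^B_t}\,d\gb_t$, so $Y$ is a continuous local martingale; on $[0,H]$ one has $0\le R_t\le x$ and $L^B_t\ge0$, whence $0\le Y_{t\wedge H}\le 1+\mu x$ is bounded and $Y^{H}$ is a genuine martingale.

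Finally I would apply optional stopping at $H$. Since $B$ is recurrent, $H<\infty$ almost surely; indeed $E^Q_0(H)=x^2<\infty$ by~\eqref{eq1.6} (with starting point $0$ and $a=-x$, $b=+x$). Bounded convergence gives $E^Q_0(Y_H)=\lim_{t\to\infty}E^Q_0(Y_{t\wedge H})=Y_0=1$, and as $R_H=x$ this reads $(1+\mu x)\,E^Q_0\bigl(e^{-\mu L^B(H)}\bigr)=1$, i.e.\ $E^Q_0\bigl(e^{-\mu L^B(H)}\bigr)=(1+\mu x)^{-1}$, as required. The only genuinely delicate point is the bookkeeping of the local-time (boundary) term in It\^o's formula — in particular checking that the normalization~\eqref{eq1.7} is exactly the one in Tanaka's formula, so that the coefficient of $dL^B$ cancels — together with the (routine) justification of optional stopping through the boundedness of $Y$ on $[0,H]$.
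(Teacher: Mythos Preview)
Your argument is correct. The paper itself does not provide a proof of this lemma in the body of the text; it defers to appendix~B of~\cite{BMMG0}, noting there that the method of~\cite[Section~6.4]{KaSh91} is used. Your route via Tanaka's formula and the exponential martingale $Y_t=e^{-\mu L^B_t}(1+\mu R_t)$ is a clean and self-contained way to obtain the Laplace transform $E^Q_0\bigl(e^{-\mu L^B(H)}\bigr)=(1+\mu x)^{-1}$ directly; the It\^o computation and the optional stopping are both handled correctly, and your remark that the normalization~\eqref{eq1.7} is precisely the one appearing in Tanaka's formula (so that the boundary term at $R=0$ cancels exactly) is the only point requiring care. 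Since the paper's proof is external, one cannot compare line by line, but the Karatzas--Shreve reference suggests a closely related martingale/local-time computation, so your approach is in the same spirit even if the packaging differs.
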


\subsection{First Passage Time Formula for Single Vertex Graphs} \label{ssect1.3}
In this subsection we set up some additional notation which will be used throughout
this article. Also we record a special form of the well-known first passage time formula,
e.g., \cite{Ra56,ItMc74}.

Let $X$ be a Brownian motion on $\cG$ in the sense of definition~\ref{def_BM}
defined on a family $\bigl(\gO,\cA, \cF=(\cF_t,\,t\ge 0), (P_\xi,\,\xi\in\cG)\bigr)$
of filtered probability spaces. Let $H_v$ be the hitting time of the vertex $v$.
It follows from definition~\ref{def_BM} that for all $\xi\in\cG$, $P_\xi(H_v<+\infty)=1$.
For $\gl>0$, set
\begin{equation}    \label{eq1.14}
    e_\gl(\xi) = E_\xi\bigl(\exp(-\gl H_v)\bigr) = e^{-\sgl d(\xi,v)},\qquad \xi\in\cG,
\end{equation}
where $E_\xi(\,\cdot\,)$ denotes expectation with respect to $P_\xi$. The last
equality follows from formula~\eqref{eq1.5}.

Recall that we denote the natural metric on $\cG$ by $d$. We introduce another
symmetric map $d_v$ from $\cG\times\cG$ to $\R_+$ defined by
\begin{equation}    \label{eq1.15}
    d_v(\xi,\eta) = d(\xi,v) + d(v,\eta),\qquad \xi,\,\eta\in\cG,
\end{equation}
which is the ``distance from $\xi$ to $\eta$ via the vertex $v$''. Observe that if
$\xi$, $\eta\in\cG$ do not belong to the same edge, then $d_v(\xi,\eta)=d(\xi,\eta)$ holds.

Next we define two heat kernels on $\cG$ by
\begin{align}
    p(t,\xi,\eta)   &= \sum_{k=1}^n \onel(\xi)\, g\bigl(t,d(\xi,\eta)\bigr)\,\onel(\eta),
                    \label{eq1.16}\\
    p_v(t,\xi,\eta) &= \sum_{k=1}^n \onel(\xi)\, g\bigl(t,d_v(\xi,\eta)\bigr)\,\onel(\eta),
                    \label{eq1.17}
\end{align}
with $t>0$, $\xi$, $\eta\in\cG$. $g$ is the Gau{\ss}-kernel defined in
equation~\eqref{eq1.4}. Hence, in local coordinates $\xi=(k,x)$, $\eta=(m,y)$, $x$, $y\ge
0$, $k$, $m\in\sn$, these kernels read
\begin{align}
    p\bigl(t,(k,x),(m,y)\bigr)
        &= \frac{1}{\sqrt{2\pi t}}\,e^{-(x-y)^2/2t}\,\gd_{km}\label{eq1.18}\\
    p_v\bigl(t,(k,x),(m,y)\bigr)
        &= \frac{1}{\sqrt{2\pi t}}\,e^{-(x+y)^2/2t}\,\gd_{km}\label{eq1.19}.
\end{align}
The \emph{Dirichlet heat kernel} $p^D$ on $\cG$ is then given by
\begin{equation}    \label{eq1.20}
    p^D(t,\xi,\eta) = p(t,\xi,\eta) - p_v(t,\xi,\eta),\qquad t>0,\,\xi,\,\eta\in\cG.
\end{equation}
It is the transition density of a strong Markov process with state space
$\cG\op\cup\{\gD\}$ which on every edge of $\cG\op$ is equivalent to a Brownian
motion until the moment of reaching the vertex when it is
killed, and $\gD$ denotes a universal cemetery state for all stochastic processes
considered adjoined to $\cG$ as an isolated point. (Observe that this process is
\emph{not} a Brownian motion on $\cG$ in the sense of
definition~\ref{def_BM}.)

The \emph{Dirichlet resolvent} $R^D = (R^D_\gl,\,\gl>0)$ on $\cG$ is defined by
\begin{equation}    \label{eq1.21}
    R^D_\gl f(\xi)
        = E_\xi\Bigl(\int_0^{H_v} e^{-\gl t} f(X_t)\,dt\Bigr),\qquad
            \gl>0,\,\xi\in\cG,\,f\in B(\cG).
\end{equation}
It is easy to see that $R^D_\gl$ has the following integral kernel on $\cG$
\begin{equation}    \label{eq1.22}
    r^D_\gl(\xi,\eta) = r_\gl(\xi,\eta) - r_{v,\gl}(\xi,\eta),\qquad \xi,\,\eta\in\cG,
\end{equation}
where for $\xi$, $\eta\in\cG$,
\begin{equation}    \label{eq1.22a}
    r_\gl(\xi,\eta) = \sum_{k=1}^n \onel(\xi)\,\frac{e^{-\sgl\,d(\xi,\eta)}}{\sgl}\,\onel(\eta),
\end{equation}
and
\begin{equation}    \label{eq1.22b}
\begin{split}
    r_{v,\gl}(\xi,\eta)
        &= \sum_{k=1}^n \onel(\xi)\, \frac{e^{-\sgl\, d_v(\xi,\eta)}}{\sgl}\,\onel(\eta)\\[1ex]
        &= \sum_{k=1}^n \onel(\xi)\, \frac{1}{\sgl}\,e_\gl(\xi)\, e_\gl(\eta)\,\onel(\eta).
\end{split}
\end{equation}
In particular, $r^D_\gl$ is the Laplace transform of the Dirichlet heat
kernel~\eqref{eq1.20} at $\gl>0$.

\vspace{.5\baselineskip}
In the present context the well-known first passage time formula, e.g., \cite{Ra56, ItMc74},
reads as follows
\begin{equation*}
    R_\gl f(\xi) = E_\xi\Bigl(\int_0^S e^{-\gl t} f(X_t)\,dt\Bigr)
                    + E_\xi\bigl(e^{-\gl S}\,R_\gl f(X_S)\bigr),
\end{equation*}
where $S$ is any $P_\xi$--a.s.\ finite stopping time relative to $\cJ$.
The choice $S=H_v$ gives the following result.

\begin{lemma}   \label{lem1.4}
Let $X$ be a Brownian motion on $\cG$ with resolvent $R=(R_\gl,\,\gl>0)$. Then
for all $\gl>0$, $\xi\in\cG$, $f\in B(\cG)$,
\begin{equation}    \label{eq1.23}
    R_\gl f(\xi) = R^D_\gl f(\xi) + e_\gl(\xi)\,R_\gl f(v)
\end{equation}
holds true.
\end{lemma}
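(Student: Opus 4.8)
The plan is to apply the general first passage time formula quoted immediately before the lemma, specialized to the stopping time $S = H_v$, and then identify each of the two resulting terms with the two summands on the right-hand side of~\eqref{eq1.23}. The first passage time formula states that for any $P_\xi$--a.s.\ finite stopping time $S$,
\[
    R_\gl f(\xi) = E_\xi\Bigl(\int_0^S e^{-\gl t} f(X_t)\,dt\Bigr)
                    + E_\xi\bigl(e^{-\gl S}\,R_\gl f(X_S)\bigr).
\]
Since definition~\ref{def_BM} guarantees $P_\xi(H_v<+\infty)=1$ for every $\xi\in\cG$, the choice $S=H_v$ is admissible, and $H_v$ is a stopping time with respect to the filtration.

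\emph{First} I would treat the second term. At the time $S=H_v$, the process has reached the vertex, so $X_{H_v}=v$ holds $P_\xi$--a.s.\ on the event $\{H_v<+\infty\}$, which has full measure. Therefore $R_\gl f(X_{H_v}) = R_\gl f(v)$ is a constant (not random), and it factors out of the expectation:
\[
    E_\xi\bigl(e^{-\gl H_v}\,R_\gl f(X_{H_v})\bigr)
        = R_\gl f(v)\, E_\xi\bigl(e^{-\gl H_v}\bigr)
        = e_\gl(\xi)\, R_\gl f(v),
\]
where the last equality is exactly the definition~\eqref{eq1.14} of $e_\gl(\xi)$. This reproduces the second summand of~\eqref{eq1.23}.

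\emph{Next} I would identify the first term with $R^D_\gl f(\xi)$. By the very definition~\eqref{eq1.21} of the Dirichlet resolvent,
\[
    R^D_\gl f(\xi) = E_\xi\Bigl(\int_0^{H_v} e^{-\gl t} f(X_t)\,dt\Bigr),
\]
which is literally the first summand produced by the first passage time formula when $S=H_v$. Substituting both identifications into the formula yields~\eqref{eq1.23}, completing the proof.

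The argument is essentially a matter of correctly invoking the strong Markov property packaged in the first passage time formula and recognizing that absorption forces $X_{H_v}=v$. \textbf{The only point deserving care} is the justification of the first passage time formula itself for the stopping time $H_v$: one needs the strong Markov property at $H_v$ together with the finiteness $P_\xi(H_v<+\infty)=1$, and one must ensure that $f\in B(\cG)$ makes all the integrals and expectations well defined (boundedness of $f$ and the exponential discount $e^{-\gl t}$ guarantee integrability). Since the general formula is quoted as known and $H_v$ is verified to be a finite stopping time, no genuine obstacle remains; the proof is a direct specialization.
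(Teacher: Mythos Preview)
Your proof is correct and follows exactly the approach indicated in the paper: specialize the first passage time formula to $S=H_v$, then identify the first term with the Dirichlet resolvent via its definition~\eqref{eq1.21} and the second term using $X_{H_v}=v$ and the definition~\eqref{eq1.14} of $e_\gl$. The paper itself does not spell out these identifications, but they are precisely the ones you supply.
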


The following notation will be convenient. For real valued measurable functions
$f$, $g$ on $\cG$, with restrictions $f_k$, $g_k$, $k\in\sn$, to the edges
$l_k$ we set
\begin{equation*}
    (f,g) = \int_{\cG} f(\xi)\,g(\xi)\,d\xi = \sum_{k=1}^n (f_k,g_k),
\end{equation*}
where the integration is with respect to the Lebesgue measure on $\cG$, and
\begin{equation*}
    (f_k,g_k) = \int_0^\infty f_k(x)\,g_k(x)\,dx,
\end{equation*}
whenever the integrals exist.

Assume that $f\in\Co$. Then for $\gl>0$, $R_\gl f$ belongs to the domain of the
ge\-ne\-ra\-tor of $X$, and therefore to $\Cii$ (cf.\ subsection~\ref{ssect1.1}).
It is straightforward to compute the derivative of the right hand side of
formula~\eqref{eq1.23}, and we obtain the

\begin{corollary}   \label{cor1.5}
For every Brownian motion $X$ on $\cG$ with resolvent $R=(R_\gl,\,\gl>0)$,
and all $f\in\Co$,
\begin{equation}    \label{eq1.24}
    (R_\gl f)'(v_k) = 2(e_{\gl,k},f_k) - \sgl R_\gl f(v),\qquad k\in\sn,
\end{equation}
holds true.
\end{corollary}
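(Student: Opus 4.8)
The plan is to differentiate the first passage time identity~\eqref{eq1.23} of Lemma~\ref{lem1.4} along the edge $l_k$ and to let $\xi$ tend to the vertex. Since $f\in\Co$, the function $R_\gl f$ lies in the domain of the generator of $X$ and hence in $\Cii$, so by Lemma~\ref{lemC02} the one-sided limit $\lim_{\xi\to v,\,\xi\in l_k^\circ}(R_\gl f)'(\xi)$ exists and equals $(R_\gl f)'(v_k)$. It therefore suffices to differentiate the two summands on the right-hand side of~\eqref{eq1.23} in the local coordinate $x$ on $l_k$ and to pass to the limit $x\downarrow 0$.

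The second summand is immediate. Along $l_k$ one has $d(\xi,v)=x$, so by~\eqref{eq1.14} $e_\gl((k,x))=e^{-\sgl x}$, whence $\tfrac{\p}{\p x}\,\bigl(e_\gl((k,x))\,R_\gl f(v)\bigr)=-\sgl\,e^{-\sgl x}\,R_\gl f(v)$, which tends to $-\sgl R_\gl f(v)$ as $x\downarrow 0$. This produces the second term on the right-hand side of~\eqref{eq1.24}.

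For the first summand I would use the explicit kernel~\eqref{eq1.22}--\eqref{eq1.22b}. Because of the indicator factors $\onel$ (equivalently the Kronecker symbols $\gd_{km}$ in~\eqref{eq1.18}--\eqref{eq1.19}), for $\xi=(k,x)$ the integral $R^D_\gl f((k,x))=\int_\cG r^D_\gl(\xi,\eta)\,f(\eta)\,d\eta$ reduces to an integral over the single edge $l_k$, namely
\[
    R^D_\gl f((k,x))=\frac{1}{\sgl}\int_0^\infty\bigl(e^{-\sgl|x-y|}-e^{-\sgl(x+y)}\bigr)\,f_k(y)\,dy .
\]
I would then split the first exponential into the regions $y<x$ and $y>x$, differentiate in $x$ by Leibniz's rule, and observe that the two boundary contributions at $y=x$ (each equal to $\pm\tfrac{1}{\sgl}f_k(x)$) cancel. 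Passing to the limit $x\downarrow 0$, the integral over $y<x$ vanishes, while the remaining two integrals both collapse to $\int_0^\infty e^{-\sgl y}f_k(y)\,dy$, giving $(R^D_\gl f)'(v_k)=2\int_0^\infty e^{-\sgl y}f_k(y)\,dy$. Since $e_{\gl,k}(y)=e^{-\sgl y}$ by~\eqref{eq1.14}, this equals $2(e_{\gl,k},f_k)$ in the inner-product notation introduced above, which is the first term of~\eqref{eq1.24}. Adding the two contributions yields the claim.

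The only genuinely technical point, and where I expect to spend the most care, is the differentiation of the first summand: one must justify differentiation under the integral sign (routine, using the boundedness of $f$ and the integrability of the exponential kernels) and, more importantly, handle the kink of $e^{-\sgl|x-y|}$ at $y=x$ so that the boundary terms produced by Leibniz's rule cancel exactly. The doubling that yields the factor~$2$ comes precisely from combining the derivative of the reflected kernel $e^{-\sgl(x+y)}$ with the $y>x$ part of the derivative of $e^{-\sgl|x-y|}$, and keeping track of these signs is the crux of the computation.
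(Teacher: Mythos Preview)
Your proposal is correct and follows exactly the approach indicated in the paper, which simply states that the result follows by differentiating the right-hand side of~\eqref{eq1.23} without giving details. Your computation of the derivative of $R^D_\gl f$ along $l_k$ --- splitting at $y=x$, checking that the Leibniz boundary terms cancel, and observing that the $y>x$ piece of $r_\gl$ and the reflected term $r_{v,\gl}$ each contribute one copy of $(e_{\gl,k},f_k)$ in the limit $x\downarrow 0$ --- is the natural way to fill in those details, and your handling of the second summand $e_\gl(\xi)R_\gl f(v)$ is immediate and correct.
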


\subsection{\boldmath The Case $b=0$}     \label{ssect1.4}
The case, where all parameters $b_k$, $\kn$, in equation~\eqref{eq1.1} vanish, is
trivial in the sense that the associated Brownian motion can be constructed by a
stochastic process living only on the edge where it started, and therefore it is
just a classical Brownian motion on $\R_+$ in the sense of~\cite[Section~6.1]{Kn81}.
This case is also discussed briefly in~\cite{Kn81}, but for the sake of completeness
we include it here in somewhat more detail than in~\cite{Kn81}.

Consider a standard Brownian motion on $\R$ as before, and without loss of
generality assume in addition that the underlying sample space is large enough such
that all constant paths in $\R$ can be realized as paths of the Brownian motion.
Construct from  the Brownian motion a new process by stopping it when it reaches the
origin of $\R$, and then kill it after an exponential holding time (independent of
the Brownian motion) with rate $\gb\ge 0$. We shall only consider starting points
$x\in\R_+$. If $\gb=0$, then the process is simply a Brownian motion with absorption
at the origin. For example, it follows from Theorem~10.1 and Theorem~10.2
in~\cite{Dy65a} that for every $\gb\ge 0$ this process is a strong Markov process,
and obviously it has the path properties which make it a Brownian motion on $\R_+$
in the sense of~\cite[Section~6.1]{Kn81}. Thus, if $\xi\in\cG$, $\xi\in l_k$, $\kn$,
then we just have to map this process with the isomorphisms between the edges $l_k$,
$\kn$, and the interval $[0,+\infty)$ into $\cG$ to obtain a Brownian motion on
$\cG$ with start in $\xi$, such that it is stopped when reaching the vertex, and
then is killed there after an exponential holding time with rate $\gb\ge 0$.

Let $U^0=(U^0_t,\,t\ge 0)$ denote the semigroup associated with this process. It is
obvious that for $f\in \Co$ we get $U^0_t f(v) = \exp(-\gb t) f(v)$, $t\ge 0$. Thus
for the corresponding resolvent $R^0=(R^0_\gl,\,\gl>0)$, and $f\in\Co$ one finds
\begin{equation}    \label{eq1.25}
    \gl R^0_\gl f(v) - f(v) + \gb R^0_\gl f(v) = 0,\qquad \gl>0.
\end{equation}
Let $A^0$ be the generator of this process, and recall from theorem~\ref{thm_Feller},
that for all $f\in\cD(A^0)$, $A^0 f(\xi) = 1/2\, f''(\xi)$, $\xi\in\cG$. But then the
identity $\gl R^0_\gl = A^0 R^0_\gl + \text{id}$ implies the following formula
\begin{equation*}
    \frac{1}{2}\,\bigl(R^0_\gl f\bigr)''(v) + \gb\,R^0_\gl f(v) = 0.
\end{equation*}
For every $\gl>0$ $R^0_\gl$ maps $\Co$ onto $\cD(A^0)$. With
the choice $a = (1+\gb)^{-1}\,\gb$, $c = (1+\gb)^{-1}$ this shows that the process
realizes the boundary conditions of equations~\eqref{eq1.1} with $b_k=0$, $\kn$.

Moreover, we can now use equation~\eqref{eq1.23} combined with formula~\eqref{eq1.25}, to
obtain the following explicit expression for the resolvent with $f\in\Co$, $\gl>0$:
\begin{equation}    \label{eq1.26}
    R^0_\gl f(\xi) = R^D_\gl f(\xi) + \frac{1}{\gb+\gl}\,e^{-\sgl d(\xi,v)}\,f(v),
                    \qquad \xi\in\cG,
\end{equation}
where, as before, $R^D_\gl$ is the Dirichlet resolvent.

In order to compute the heat kernel associated with this process on $\cG$, we invert
the Laplace transforms in equation~\eqref{eq1.26}. For the first term on the right
hand side this is trivial, and gives the Dirichlet heat kernel $p^D$, cf.\
equation~\eqref{eq1.20}. The second term could be handled by a formula which can be
found in the tables (e.g., \cite[eq.~(5.6.10)]{ErMa54a}). But this formula involves
the complementary error function $\erfc$ at complex arguments, and does not yield a
very intuitive expression. Instead, we can simply use the observation that
$t\mapsto \exp(-\gb t)$ is the inverse Laplace transform of $\gl\mapsto (\gb+\gl)^{-1}$.
Moreover, the well-known formula for the density of the hitting time of the origin
by a Brownian motion on the real line (e.g., \cite[p.~25]{ItMc74}, \cite[p.~96]{KaSh91},
\cite[p.~102]{ReYo91})
provides us with the following expression for the density of the first hitting time of the vertex
\begin{equation}	\label{DFT}
	P_\xi(H_v\in ds) = \frac{d(\xi,v)}{\sqrt{2\pi s^3}}\,e^{-d(\xi,v)^2/2s}\,ds,\qquad s\ge 0.
\end{equation}
Using the well-known Laplace transform (e.g., \cite[eq.~(4.5.28)]{ErMa54a})
\begin{equation}    \label{LT}
    \int_0^\infty e^{-\gl s}\,\frac{a}{\sqrt{2\pi s^3}}\,e^{-a^2/2s}\,ds
        = e^{-\sgl a},\qquad a>0,\,\gl>0,
\end{equation}
we infer that the inverse Laplace transform of the exponential in~\eqref{eq1.26}
is given by $P_\xi(H_v\in ds)$. Thus we obtain the following heat kernel
\begin{equation}    \label{eq1.27}
\begin{split}
    p^0(t,\xi,d\eta)
        &= p^D(t,\xi,\eta)\,d\eta - \Bigl(\int_0^t e^{-\gb(t-s)}
                        \,P_\xi(H_v\in ds)\Bigr)\,\gep_v(d\eta),\\[1ex]
        &= p^D(t,\xi,\eta)\,d\eta - \Bigl(\int_0^t e^{-\gb(t-s)}
                \,\frac{d(\xi,v)}{\sqrt{2\pi s^3}}\,e^{-d(\xi,v)^2/2s}\,ds\Bigr)\,\gep_v(d\eta),
\end{split}
\end{equation}
with $\xi$, $\eta\in\cG$, $t>0$, and $\gep_v$ is the Dirac measure at the vertex $v$.

\subsection{Killing via the Local Time at the Vertex}    \label{ssect1.5}
We recall from remark~\ref{rem_BM}, that we may and will consider every Brownian
motion $X$ on $\cG$ with respect to a filtration $\cF = (\cF_t,\,t\ge 0)$ which is
right continuous and complete relative to $(P_\xi,\,\xi\in\cG)$, and such that
$X$ is strongly Markovian with respect to $\cF$.

In this subsection we suppose that $X$ is a Brownian motion on the single vertex
graph $\cG$ with infinite lifetime, and such that the vertex is not absorbing. This
entails (e.g., \cite[Proposition~II.2.19]{ReYo91}) that $X$ leaves the vertex immediately and
begins a standard Brownian excursion into one of the edges. Therefore we get in this
case for the hitting time $H_v$ of the vertex $P_v(H_v=0)=1$, i.e., $v$ is regular
for $\{v\}$ in the sense of~\cite{BlGe68}. Consequently $X$ has a local time
$L=(L_t,\,t\ge 0)$ at the vertex (e.g., \cite[Theorem~V.3.13]{BlGe68}). Without loss
of generality, we suppose throughout this subsection that $L$ is a \emph{perfect
continuous homogeneous additive functional (PCHAF)} of $X$ in the sense
of~\cite[Section~III.32]{Wi79}. That is, $L$ is a non-decreasing process, which is
adapted to $\cF$, and such that it is a.s.\ continuous, additive, i.e., $L_{t+s} =
L_t + L_s\comp \theta_t$, and for all $\xi\in\cG$, $P_\xi\bigl(L_0=0\bigr)=1$ holds
true. Moreover we may and will assume from now on that $X$ and $L$ are
\emph{pathwise} continuous.

Killing $X$ exponentially on the scale of $L$, we can construct a new Brownian
motion $\hX$ on $\cG$.  We shall do this using the method of \cite{Kn81, KaSh91}.

Let $K=(K_s,\,s\in\R_+)$ denote the right continuous pseudo-inverse of $L$:
\begin{equation}    \label{eq_def_K}
    K_s = \inf\{t\ge 0,\,L_t>s\},\qquad s\in\R_+,
\end{equation}
where --- as usual --- we make the convention that $\inf\emptyset=+\infty$. The
continuity of $L$ entails that for every $s\in\R_+$, $L_{K_s}=s$. Clearly, $K$ is
increasing, and due to its right continuity it is a measurable stochastic process.
Fix $s\in\R_+$. It is straightforward to check that for every $t\in\R_+$,
\begin{equation}    \label{eq_set_K}
    \{K_s < t\} = \{L_t > s\}.
\end{equation}
Because $L$ is adapted, the set on the right hand side belongs to $\cF_t$, and since
$\cF$ is right continuous, equality~\eqref{eq_set_K} shows that for every $s\in\R_+$,
$K_s$ is a stopping time relative to $\cF$. We remark that since $L$ only increases
when $X$ is at the vertex $v$, the continuity of $X$ implies that for every
$s\in\R_+$ we get $X(K_s) = v$ on $\{K_s<+\infty\}$. On the other hand, we shall
argue below that $L$ a.s.\ increases to $+\infty$, so that we get $X(K_s)=v$
a.s.\ for all $s\in\R_+$.

Let $\gb>0$. Bring in the additional probability space $(\R_+, \cB(\R_+), P_\gb)$,
where $P_\gb$ is the exponential law with rate $\gb$. Let $S$ denote the
associated coordinate random variable $S(s) = s$, $s\in\R_+$. Define
\begin{equation*}
    \hgO = \gO\times\R_+,\quad
    \hcA = \cA\otimes\cB(\R_+),\quad
    \hP_\xi = P_\xi\otimes P_\gb,\  \xi\in\cG.
\end{equation*}
We extend $X$, $L$, $K$, and $S$ in the canonical way to these enlarged probability
spaces, but for simplicity keep the same notation for these quantities.

Set
\begin{equation}    \label{eq1.28}
    \zeta_\gb = \inf\bigl\{t\ge 0,\,L_t>S\bigr\},
\end{equation}
and observe that since $K$ is measurable we may write $\zeta_\gb = K_S$. Thus as
above we get $X(\zeta_\gb) = v$. Define the \emph{killed process}
\begin{equation}    \label{eq1.29}
    \hX_t =  \begin{cases}
                X_t,    & t<\zeta_\gb,\\
                \gD,    & t\ge \zeta_\gb.
             \end{cases}
\end{equation}

Since this prescription for killing the process $X$ via the (PCHAF) $L$ is slightly
different from the method used in~\cite{BlGe68, Wi79}, we cannot directly use the results
proved there to conclude that the subprocess $\hX$ of $X$ is still a strong Markov
process. However, it has been proved in~\cite[Appendix~A]{BMMG0}
that the strong Markov property is preserved under this method of killing, i.e.,
$\hX$ is a strong Markov process relative to its natural filtration (actually
relative to a larger filtration, but we will not use this here). Now we may employ
the arguments in section~2.7 of~\cite{KaSh91}, or in section~III.2 of~\cite{ReYo91}
to conclude that $\hX$ is a strong Markov process with respect to the universal
right continuous and complete augmentation of its natural filtration.

It is clear that $\hX$ has a.s.\ right continuous paths which admit left limits, and
that its paths on $[0,\zeta_\gb)$ are equal to those of $X$, and thus are continuous
on this random time interval. Moreover, it is obvious that for every $\xi\in\cGo$,
we have $P_\xi(\zeta_\gb \ge H_v)=1$. Therefore, up to its hitting time of the vertex,
$\hX$ is equivalent to a Brownian motion on the edge to which $\xi$ belongs, because
so is $X$. Altogether we have proved that --- under the hypothesis that $L$ is a
PCHAF, which will be argued below in all cases that we consider --- $\hX$ is a
Brownian motion on $\cG$ in the sense of definition~\ref{def_BM}.

There is a simple, useful relationship between the resolvents $R$ and $\hR$ of the
processes $X$ and $\hX$, respectively. Recall our convention that all functions $f$
on $\cG$ are extended to $\cG\cup\{\gD\}$ by $f(\gD)=0$.

\begin{lemma}   \label{lem1.6}
For all $\gl>0$, $f\in B(\cG)$, $\xi\in\cG$,
\begin{equation}    \label{eq1.30}
    \hR_\gl f(\xi)
        = R_\gl f(\xi) - e_\gl(\xi)\,\hE_v\bigl(e^{-\gl \zeta_\gb}\bigr)\,R_\gl f(v)
\end{equation}
holds true, where $e_\gl$ is defined in equation~\eqref{eq1.14}.
\end{lemma}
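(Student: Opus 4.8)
The plan is to compute both resolvents by decomposing the time integral at the killing time $\zeta_\gb$ and exploiting the strong Markov property of $X$ at $\zeta_\gb$. Recall that $\hX$ agrees with $X$ on $[0,\zeta_\gb)$ and equals $\gD$ afterwards, and that $f(\gD)=0$ by convention. Hence for $f\in B(\cG)$, $\gl>0$, $\xi\in\cG$,
\begin{equation*}
    \hR_\gl f(\xi) = \hE_\xi\Bigl(\int_0^\infty e^{-\gl t}f(\hX_t)\,dt\Bigr)
        = \hE_\xi\Bigl(\int_0^{\zeta_\gb} e^{-\gl t}f(X_t)\,dt\Bigr),
\end{equation*}
while the full resolvent splits as
\begin{equation*}
    R_\gl f(\xi) = E_\xi\Bigl(\int_0^{\zeta_\gb} e^{-\gl t}f(X_t)\,dt\Bigr)
        + E_\xi\Bigl(\int_{\zeta_\gb}^\infty e^{-\gl t}f(X_t)\,dt\Bigr).
\end{equation*}
Since $X$ does not depend on the extra coordinate $S$, the first integral has the same $\hE_\xi$-expectation as its $E_\xi$-expectation, so subtracting gives $\hR_\gl f(\xi) = R_\gl f(\xi) - E_\xi\bigl(\int_{\zeta_\gb}^\infty e^{-\gl t}f(X_t)\,dt\bigr)$, and the entire task reduces to evaluating that tail term.

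First I would handle the tail by the strong Markov property. Because $\zeta_\gb=K_S$ is a stopping time relative to the enlarged filtration and $X(\zeta_\gb)=v$ on $\{\zeta_\gb<+\infty\}$ (as noted in the text, using that $L$ increases only at $v$ and the continuity of $X$), I would write, using the shift operator $\theta$,
\begin{equation*}
    \int_{\zeta_\gb}^\infty e^{-\gl t}f(X_t)\,dt
        = e^{-\gl\zeta_\gb}\int_0^\infty e^{-\gl u}f(X_u)\comp\theta_{\zeta_\gb}\,du,
\end{equation*}
and take conditional expectation given $\cF_{\zeta_\gb}$. The strong Markov property turns the shifted integral into $R_\gl f(X(\zeta_\gb))=R_\gl f(v)$, yielding
\begin{equation*}
    E_\xi\Bigl(\int_{\zeta_\gb}^\infty e^{-\gl t}f(X_t)\,dt\Bigr)
        = \hE_\xi\bigl(e^{-\gl\zeta_\gb}\bigr)\,R_\gl f(v).
\end{equation*}
Here I must be a little careful that the factor $e^{-\gl\zeta_\gb}$ is $\cF_{\zeta_\gb}$-measurable and that $R_\gl f(v)$ is a genuine constant pulled out of the expectation.

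It remains to identify $\hE_\xi(e^{-\gl\zeta_\gb})$ with $e_\gl(\xi)\,\hE_v(e^{-\gl\zeta_\gb})$. The idea is to split off the first passage to the vertex: for $\xi\in\cGo$ one has $\zeta_\gb\ge H_v$ $P_\xi$-a.s.\ (since $L$ cannot increase before $X$ reaches $v$), and by the strong Markov property at $H_v$ together with the additivity $L_{t+H_v}=L_{H_v}+L_t\comp\theta_{H_v}=L_t\comp\theta_{H_v}$ (as $L_{H_v}=0$), the residual local-time clock $\zeta_\gb - H_v$ is distributed as $\zeta_\gb$ started from $v$. Decomposing $e^{-\gl\zeta_\gb}=e^{-\gl H_v}\,e^{-\gl(\zeta_\gb-H_v)}$ and conditioning on $\cF_{H_v}$, the factor $e^{-\gl H_v}$ contributes $E_\xi(e^{-\gl H_v})=e_\gl(\xi)$ by~\eqref{eq1.14}, and the remaining factor contributes $\hE_v(e^{-\gl\zeta_\gb})$; for $\xi=v$ this reads trivially as $e_\gl(v)=1$. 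Substituting back gives exactly~\eqref{eq1.30}.

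I expect the main obstacle to be the rigorous justification of the strong Markov step at $\zeta_\gb$ on the enlarged probability space, since $\zeta_\gb=K_S$ depends on the adjoined exponential coordinate $S$; one must ensure $\zeta_\gb$ is a stopping time for the product filtration and that the Markov property transfers to $(\hgO,\hcA,\hP_\xi)$. This is precisely what the cited result from~\cite[Appendix~A]{BMMG0} is designed to provide, so I would invoke it to license the computation, and the rest is the bookkeeping of conditioning sketched above.
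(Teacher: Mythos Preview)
Your outline matches the paper's proof in structure: both split at $\zeta_\gb$, use $X(\zeta_\gb)=v$ to reduce the tail to $\hE_\xi(e^{-\gl\zeta_\gb})\,R_\gl f(v)$, and then factor $\hE_\xi(e^{-\gl\zeta_\gb})$ through $H_v$. The difference lies precisely at the point you flag as the main obstacle, and the paper's route sidesteps it rather than confronting it. Instead of invoking the strong Markov property of $X$ at the randomized time $\zeta_\gb=K_S$ on the product space, the paper first integrates out $S$ via Fubini: the tail expectation becomes
\begin{equation*}
    \gb\int_0^\infty e^{-\gb s}\int_0^\infty e^{-\gl t}\,
        E_\xi\bigl(e^{-\gl K_s}\,f(X_{t+K_s})\bigr)\,dt\,ds.
\end{equation*}
For each fixed $s$, $K_s$ is a genuine $\cF$--stopping time (established in subsection~\ref{ssect1.5}), so the ordinary strong Markov property of $X$ relative to $\cF$ applies and yields $E_\xi(e^{-\gl K_s})\,E_v(f(X_t))$. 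The same device handles the factorization through $H_v$: for fixed $s$ one has $K_s=H_v+K_s\comp\theta_{H_v}$, and strong Markov at $H_v$ gives $E_\xi(e^{-\gl K_s})=e_\gl(\xi)\,E_v(e^{-\gl K_s})$; integrating back against $P_\gb(ds)$ finishes the argument.

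Your appeal to Appendix~A of~\cite{BMMG0} does not quite deliver what you need: that result establishes the strong Markov property of the \emph{killed} process $\hX$, whereas your computation requires the strong Markov property of $X$ itself at $\zeta_\gb$ with respect to a filtration on the enlarged space. This can be repaired by an independent-enlargement argument (adjoin $\gs(S)$ to each $\cF_t$ and verify that $X$ remains strong Markov and that $\zeta_\gb$ is a stopping time for the enlarged filtration), but the paper's Fubini manoeuvre avoids the issue entirely with no extra machinery.
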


\begin{proof}
For $\gl>0$, $f\in B(\cG)$, $\xi\in\cG$
\begin{align*}
    \hR_\gl f(\xi)
        &= \hE_\xi\Bigl(\int_0^{\zeta_\gb} e^{-\gl t} f(X_t)\,dt\Bigr)\\
        &= R_\gl f(\xi) - \hE_\xi\Bigl(e^{-\gl\zeta_\gb} \int_0^\infty
                e^{-\gl t}\,f\bigl(X_{t+\zeta_\gb}\bigr)\,dt\Bigr).
\end{align*}
By construction, the last expectation value is equal to
\begin{equation*}
    \gb\int_0^\infty e^{-\gb s}\int_0^\infty e^{-\gl t}
        E_\xi\Bigl(e^{-\gl K_s}\,f\bigl(X_{t+K_s}\bigr)\Bigr)\,dt\,ds,
\end{equation*}
where we used Fubini's theorem. Consider the expectation value under the integrals,
and recall that for fixed $s\in\R_+$, $K_s$ is an $\cF$--stopping time, while
$X$ is strongly Markovian relative to $\cF$. Hence we can compute as follows
\begin{align*}
    E_\xi\Bigl(e^{-\gl K_s}\,f\bigl(X_{t+K_s}\bigr)\Bigr)
        &= E_\xi\Bigl(e^{-\gl K_s}\,E_\xi\Bigl(f\bigl(X_{t+K_s}\bigr)\cond \cF_{K_s}\Bigr)\Bigr)\\
        &= E_\xi\Bigl(e^{-\gl K_s}\,E_{X(K_s)}\bigl(f(X_t)\bigr)\Bigr)\\
        &= E_\xi\bigl(e^{-\gl K_s}\bigr)\,E_v\bigl(f(X_t)\bigr),
\end{align*}
where we used the fact that a.s.\ $X(K_s)=v$. So far we have established
\begin{equation*}
    \hR_\gl f(\xi) = R_\gl f(\xi) - \hE_\xi\bigl(e^{-\gl \zeta_\gb}\bigr)\,R_\gl f(v).
\end{equation*}
In order to compute the expectation value on the right hand side, we first remark
that because $L$ is zero until $X$ hits the vertex for the first time, we find
that for given $s\in\R_+$, $K_s\ge H_v$, and therefore $K_s = H_v + K_s\comp \theta_{H_v}$.
Hence, and again by the strong Markov property,
\begin{align*}
    E_\xi\bigl(e^{-\gl K_s}\bigr)
        &= E_\xi\bigl(e^{-\gl H_v}\,e^{-\gl K_s\comp H_v}\bigr)\\
        &= E_\xi\bigl(e^{-\gl H_v}\,E_\xi\bigl(e^{-\gl K_s\comp H_v}\cond \cF_{H_v}\bigr)\bigr)\\
        &= E_\xi\bigl(e^{-\gl H_v}\bigr) E_v\bigl(e^{-\gl K_s}\bigr),
\end{align*}
Integrating the last identity against the exponential law in the variable $s$, we
find with formula~\eqref{eq1.14}
\begin{equation*}
    \hE_\xi\bigl(e^{-\gl\zeta_\gb}\bigr) = e_\gl(\xi)\,\hE_v\bigl(e^{-\gl \zeta_\gb}\bigr),
\end{equation*}
and the proof is finished.
\end{proof}

\begin{remark}  \label{rem1.7}
Formula~\eqref{eq1.30} is quite useful, because if the resolvent of $X$ is known, then
--- in view of equation~\eqref{eq1.14} --- it reduces the calculation of $\hR_\gl$
to the computation of the Laplace transform of the density of $\zeta_\gb$ under
$\hP_v$.
\end{remark}

\section{The Walsh Process} \label{sect2}
The most basic process --- which on a single vertex graph plays the same role as a
reflected Brownian motion on the half line --- is the well-known \emph{Walsh
process}, which we denote by $W=(W_t,\,t\ge 0)$. It corresponds to the case where
the parameters $a$ and $c$ in the boundary condition~\eqref{eq1.1} both vanish. This
process has been introduced by Walsh in~\cite{Wa78} as a generalization of the skew
Brownian motion discussed in\cite[Chapter~4.2]{ItMc74} to a process in $\R^2$ which
only moves on rays connected to the origin.

A pathwise construction of the Walsh process in the present context is as follows.
Consider the paths of the standard Brownian motion $B=(B_t,\,t\ge 0)$ on $\R$, and
its associated reflected Brownian motion $|B|=(|B_t|,\,t\ge 0)$, where $|\,\cdot\,|$
denotes absolute value. Let $Z=\{t\ge 0,\,B_t=0\}$. Then its complement $Z^c$ is
open, and hence it is the pairwise disjoint union of a countable family of
\emph{excursion intervals} $I_j = (t_j, t_{j+1})$, $j\in\N$. Let $R=(R_j,\,j\in\N)$
be an independent sequence of identically distributed random variables, independent of $B$,
with values in $\sn$ such that $R_j$, $j\in\N$, takes the value $k\in\sn$ with probability
$w_k\in [0,1]$, $\sum_k w_k =1$. Now define $W_t = v$ if $t\in Z$, and if $t\in I_j$, and
$R_j = k$ set $W_t = (k, |B_t|)$. In other words, when starting at $\xi\in\cGo$, the
process moves as a Brownian motion on the edge containing $\xi$ until it hits the
vertex at time $H_v$, and then $W$ performs Brownian excursions
from the vertex $v$ into the edges $l_k$, $k\in\sn$, whereby the edge $l_k$ is
selected with probability $w_k$.

As for the standard Brownian motion on $\R$ (cf.\ subsection~\ref{ssect1.2}), we
may and will assume without loss of generality that $W$ has exclusively continuous
paths.

Walsh has remarked in the epilogue of~\cite{Wa78}, cf.\ also~\cite{BaPi89}, that it
is not completely straightforward to prove that this stochastic process is strongly
Markovian. A proof of the strong Markov property based on It\^o's excursion
theory~\cite{It71a} has been given in~\cite{Sa86a, Sa86b}. A construction of this
process via its Feller semigroup can be found in~\cite{BaPi89} (cf.\ also the references
quoted there for other approaches).

Next we check that the Walsh process has a generator with boundary condition at the
vertex given by \eqref{eq1.1} with $a=c=0$. Let $f\in\cD(A^w)$. At the vertex $v$
Dynkin's form for the generator reads
\begin{equation}    \label{eq2.1}
    A^w f(v) = \lim_{\gep\da 0} \frac{E_v\Bigl(f\bigl((X(\TW_{v,\gep})\bigr)\Bigr)-f(v)}
                                  {E_v(\TW_{v,\gep})},
\end{equation}
where $\TW_{v,\gep}$ is the hitting time of the complement of the open ball
$B_\gep(v)$ of radius $\gep>0$ around $v$.

\begin{lemma}   \label{lem2.1}
For the Walsh process $E_v(\TW_{v,\gep}) = \gep^2$.
\end{lemma}

\begin{proof}
Since by construction $W$ has infinite lifetime, $\TW_{v,\gep}$ is the hitting time
of the set of the $n$ points with local coordinates $(k,\gep)$, $\kn$. Therefore, by the
independence of the choice of the edge for the values of the excursion, it follows
that under $P_v$ the stopping time $\TW_{v,\gep}$ has the same law as the hitting
time of the point $\gep>0$ of a reflected Brownian motion on $\R_+$, starting at
$0$. Thus the statement of the lemma follows from equation~\eqref{eq1.6}.
\end{proof}

From the construction of $W$ we immediately get
\begin{equation*}
    E_v\Bigl(f\bigl(W(\TW_{v,\gep})\bigr)\Bigr) = \sum_{k=1}^n w_k f_k(\gep),
\end{equation*}
with the notation $f_k(x)=f(k,x)$, $x\in\R_+$. Inserting this into
equation~\eqref{eq2.1} we obtain
\begin{equation*}
    A^w f(v) = \lim_{\gep\da 0} \gep^{-2} \sum_{k=1}^n w_k \bigl(f_k(\gep)-f(v)\bigr),
\end{equation*}
and since $f'(v_k)$ exists (cf.\ lemma~\ref{lemC02}) it is obvious that
this entails the condition
\begin{equation}    \label{eq2.2}
    \sum_{k=1}^n w_k f'(v_k) = 0.
\end{equation}
For later use we record this result as

\begin{theorem} \label{thm2.2}
Consider the boundary condition~\eqref{eq1.1} with $a=c=0$, and $b\in [0,1]^n$.
Let $W$ be a Walsh process as constructed above with the choice $w_k=b_k$,
$k\in\sn$. Then the generator $A^w$ of $W$ is $1/2$ times the second derivative on
$\cG$ with domain consisting of those $f\in\Cii$ which satisfy condition~\eqref{eq1.1b}.
\end{theorem}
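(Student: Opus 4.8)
The plan is to deduce the theorem from Feller's theorem (theorem~\ref{thm_Feller}) rather than to compute the domain of $A^w$ from scratch. The strategy has three steps: first I would verify that the Walsh process $W$ is a Brownian motion on $\cG$ in the sense of definition~\ref{def_BM}, so that theorem~\ref{thm_Feller} applies and produces constants $\ga$, $\gg$, $\gb_k\in[0,1]$ with $\ga+\gg+\sum_k\gb_k=1$ and $\ga\ne1$, characterizing $\cD(A^w)$ via~\eqref{eq1.1b} and giving $A^w f=\frac12 f''$; second, I would run the Dynkin computation recorded just above the statement to extract the precise boundary functional that is satisfied on $\cD(A^w)$; and third, I would match the two descriptions of the domain to pin down $\ga=\gg=0$ and $\gb_k=w_k$.

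For the first step I would argue directly from the pathwise construction. Up to the hitting time $H_v$ the process $W$ started in $\xi\in\cGo$ coincides with a reflected Brownian motion on the edge containing $\xi$, so $W$ absorbed at $v$ is a Brownian motion on $\R_+$ absorbed at the origin; by construction $W$ has continuous paths; and it is strongly Markovian by the results of~\cite{Sa86a,Sa86b,BaPi89} quoted above. Hence $W$ meets definition~\ref{def_BM} and, as remarked in subsection~\ref{ssect1.1}, is a Feller process, so theorem~\ref{thm_Feller} yields the asserted form $A^w f=\frac12 f''$ together with a domain of the form~\eqref{eq1.1b} for some admissible $\ga$, $\gg$, $\gb_k$. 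In particular $\cD(A^w)\subset\Cii$.

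The substantive part is the third step, and here the Dynkin computation preceding the statement does the work: for $f\in\cD(A^w)$ the second-order Taylor expansion at $v$, legitimate by lemma~\ref{lemC02} and remark~\ref{remC02} (which supply the one-sided derivatives $f'(v_k)$ and the identity $f''(v_k)=f''(v)$), turns~\eqref{eq2.1} together with lemma~\ref{lem2.1} into $A^w f(v)=\lim_{\gep\da0}\gep^{-2}\bigl(\gep\sum_k w_k f'(v_k)+\frac{\gep^2}{2}f''(v)+o(\gep^2)\bigr)$. Finiteness of the limit forces the $O(\gep^{-1})$ term to vanish, i.e.\ $\sum_k w_k f'(v_k)=0$, which is exactly~\eqref{eq2.2}, and then $A^w f(v)=\frac12 f''(v)$. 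Thus $\cD(A^w)$ lies in the kernel of the nonzero functional $\psi(f)=\sum_k w_k f'(v_k)$ on $\Cii$. Since theorem~\ref{thm_Feller} already identifies $\cD(A^w)$ as the full, codimension-one kernel of $\phi(f)=\ga f(v)+\frac{\gg}{2}f''(v)-\sum_k\gb_k f'(v_k)$, the inclusion $\ker\phi\subset\ker\psi$ forces $\phi$ and $\psi$ to be proportional. The hard part will be converting this proportionality into the stated equalities: using that the evaluation functionals $f\mapsto f(v)$, $f\mapsto f''(v)$, $f\mapsto f'(v_k)$ are linearly independent on $\Cii$ (any prescribed boundary data can be realized by a suitable $f\in\Cii$, respecting $f''(v_j)=f''(v)$), I would compare coefficients in $\phi=\mu\psi$ to read off $\ga=\gg=0$ and $\gb_k=-\mu w_k$, and then invoke the normalization $\sum_k\gb_k=1=\sum_k w_k$ from~\eqref{eq1.1a} to fix $\mu=-1$, hence $\gb_k=w_k$. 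This establishes the boundary condition~\eqref{eq1.1b} with $a=c=0$ and $b_k=w_k$, as claimed.
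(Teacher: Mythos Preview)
Your argument is correct and follows the same route as the paper: the Dynkin computation at $v$ combined with lemma~\ref{lem2.1} yields the necessary condition~\eqref{eq2.2}, and then Feller's theorem pins down the domain. The paper's proof is precisely the passage preceding the theorem statement; it derives~\eqref{eq2.2} and then simply records the result, leaving the identification of the Feller parameters implicit, whereas you spell out the codimension-one matching $\ker\phi=\ker\psi$ and the normalization step explicitly---a welcome clarification, but not a different method.
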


For the remainder of this section we make the choice $a=c=0$,
$w_k=b_k$, $k\in\sn$ in~\eqref{eq1.1}.

Next we compute the resolvent of $W$. Let $\gl>0$, $f\in\Co$, and consider first
$\xi=v$. Without loss of generality, we may assume that $W$ has been constructed pathwise
from a standard Brownian motion $B$ as described above, and that $B$ is as in
subsection~\ref{ssect1.2}. Then we get
\begin{equation*}
    E_v\bigl(f(W_t)\bigr)
        = \sum_{m=1}^n b_m E^Q_0\bigl(f_m(|B_t|)\bigr).
\end{equation*}
Hence we find for the resolvent $\RW$ of the Walsh process
\begin{subequations}    \label{eq2.3}
\begin{equation}    \label{eq2.3a}
    \RW_\gl f(v)
        = \int_\cG \rW_\gl(v,\eta)\, f(\eta)\,d\eta
\end{equation}
with resolvent kernel $\rW_\gl (v,\eta)$, $\eta\in\cG$, given by
\begin{equation}    \label{eq2.3b}
    \rW_\gl(v,\eta) = \sum_{m=1}^n 2 b_m\, \frac{e^{-\sgl\, d(v,\eta)}}{\sgl}\,\onel[m](\eta),
\end{equation}
\end{subequations}
and where the integration in~\eqref{eq2.3a} is with respect to the Lebesgue
measure on $\cG$.

Now let $\xi\in\cG$. We use the first passage time formula~\eqref{eq1.23} together with
formulae~\eqref{eq1.14} and \eqref{eq2.3}, and obtain

\begin{lemma}   \label{lem2.3}
The resolvent of the Walsh process on $\cG$ is given by
\begin{subequations}    \label{eq2.4}
\begin{equation}    \label{eq2.4a}
    \RW_\gl f(\xi) = \int_\cG \rW_\gl(\xi,\eta) f(\eta)\,d\eta,\qquad \gl>0,\,\xi\in\cG,\,f\in B(\cG),
\end{equation}
with
\begin{align}
    \rW_\gl(\xi,\eta)
        &= r_\gl(\xi,\eta) + \sum_{k,m=1}^n e_{\gl,k}(\xi) \,\SW_{km}
                \,\frac{1}{\sgl}\,e_{\gl,m}(\eta), \label{eq2.4b}\\
    \SW_{km}
        &= 2 w_m - \gd_{km}, \label{eq2.4c}
\end{align}
where $r_\gl$ is defined in equation~\eqref{eq1.22a}, and where $e_{\gl,k}$,
$e_{\gl,m}$ denote the restrictions of $e_\gl$ (cf.~\eqref{eq1.14}) to
the edges $l_k$, $l_m$ respectively.
\end{subequations}
\end{lemma}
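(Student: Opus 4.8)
The plan is to apply the first passage time formula of Lemma~\ref{lem1.4} with the role of the generic Brownian motion $X$ played by the Walsh process $W$, and to compute each of the two resulting terms explicitly. By equation~\eqref{eq1.23} we have, for $\gl>0$, $\xi\in\cG$, $f\in B(\cG)$,
\begin{equation*}
    \RW_\gl f(\xi) = R^D_\gl f(\xi) + e_\gl(\xi)\,\RW_\gl f(v),
\end{equation*}
so the computation splits into two pieces: the Dirichlet resolvent $R^D_\gl$, whose kernel is already recorded in \eqref{eq1.22}--\eqref{eq1.22b}, and the value $\RW_\gl f(v)$ at the vertex, which was just computed in \eqref{eq2.3}.

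First I would assemble the kernel of $R^D_\gl$. From \eqref{eq1.22} we have $r^D_\gl(\xi,\eta) = r_\gl(\xi,\eta) - r_{v,\gl}(\xi,\eta)$, and the second line of \eqref{eq1.22b} rewrites $r_{v,\gl}$ in the factored form $\sum_k \onel(\xi)\,(\sgl)^{-1} e_\gl(\xi)\,e_\gl(\eta)\,\onel(\eta)$. Because both $e_\gl$ and the indicators are edge-diagonal, this is naturally expressed in terms of the edge restrictions $e_{\gl,k}$; indeed $r_{v,\gl}(\xi,\eta) = \sum_{k} e_{\gl,k}(\xi)\,(\sgl)^{-1} e_{\gl,k}(\eta)$, which already has the bilinear shape appearing in \eqref{eq2.4b} but only with the diagonal terms $k=m$.

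Next I would handle the vertex term. Substituting \eqref{eq2.3a}--\eqref{eq2.3b} gives
\begin{equation*}
    \RW_\gl f(v) = \int_\cG \Bigl(\sum_{m=1}^n 2 w_m\,\frac{e^{-\sgl d(v,\eta)}}{\sgl}\,\onel[m](\eta)\Bigr) f(\eta)\,d\eta,
\end{equation*}
and since $d(v,\eta)=d(\eta,v)$ we can write $e^{-\sgl d(v,\eta)}\onel[m](\eta) = e_{\gl,m}(\eta)$. Multiplying by $e_\gl(\xi)=\sum_k e_{\gl,k}(\xi)$ then produces a double sum $\sum_{k,m} e_{\gl,k}(\xi)\,2w_m\,(\sgl)^{-1}\,e_{\gl,m}(\eta)$, again of the bilinear shape in \eqref{eq2.4b}. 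Adding the two contributions, the full kernel becomes $r_\gl(\xi,\eta)$ together with $\sum_{k,m} e_{\gl,k}(\xi)\,(2w_m-\gd_{km})\,(\sgl)^{-1}\,e_{\gl,m}(\eta)$, where the $-\gd_{km}$ comes precisely from the diagonal $r_{v,\gl}$ term subtracted in $R^D_\gl$. This identifies $\SW_{km}=2w_m-\gd_{km}$ as claimed in \eqref{eq2.4c}.

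The step requiring the most care is the bookkeeping of the edge-diagonal structure: one must verify that the indicator factors $\onel$ in $r_{v,\gl}$ and in $\rW_\gl(v,\cdot)$ combine correctly with the decomposition $e_\gl=\sum_k e_{\gl,k}$ so that no spurious cross terms appear and the single sum from the Dirichlet part lands exactly on the diagonal $k=m$ of the double sum. Since $e_{\gl,k}(\xi)$ is supported on the edge $l_k$, the product $e_{\gl,k}(\xi)e_{\gl,m}(\eta)$ with $k\ne m$ is nonzero only when $\xi\in l_k$ and $\eta\in l_m$ lie on distinct edges, and one should confirm this is consistent with using $d_v$ rather than $d$ in $r_{v,\gl}$; by the observation following \eqref{eq1.15} that $d_v=d$ off-diagonal, everything is consistent. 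Once this indexing is tracked carefully, assembling \eqref{eq2.4a}--\eqref{eq2.4c} is routine, and the extension from $f\in\Co$ to $f\in B(\cG)$ follows by the standard measure-theoretic argument since the kernel identity holds pointwise.
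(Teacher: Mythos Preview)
Your proposal is correct and follows essentially the same approach as the paper: the paper simply states that one uses the first passage time formula~\eqref{eq1.23} together with~\eqref{eq1.14} and~\eqref{eq2.3}, and your write-up makes exactly this computation explicit, including the bookkeeping that turns the diagonal subtraction $r_{v,\gl}$ into the $-\gd_{km}$ part of $\SW_{km}$. The only unnecessary remark is the final sentence about extending from $\Co$ to $B(\cG)$: Lemma~\ref{lem1.4} is already stated for $f\in B(\cG)$, so no separate approximation step is needed.
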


\begin{remark}  \label{rem2.4}
The matrix $\SW=\bigl(\SW_{km},\,k,m = 1,\dotsc,n\bigr)$ is the \emph{scattering
matrix} as defined in quantum mechanics. We briefly recall its construction in the
present context, for more details the interested reader is referred
to~\cite{KoSc99}. $\SW$ is obtained from the boundary conditions at the vertex $v$
in the following way. Consider a function $f$ on $\cG$ which is continuously
differentiable in $\cGo=\cG\setminus\{v\}$, and such that for all $\kn$ the limits
\begin{align*}
    F_k  &= f(v_k) = \lim_{\xi\to v,\,\xi\in l_k\op} f(\xi)\\
    F'_k &= f'(v_k) = \lim_{\xi\to v,\,\xi\in l_k\op} f'(\xi)
\end{align*}
exist. Define two column vectors $F$, $F'\in\C^n$, having the components $F_k$ and
$F'_k$, $\kn$, respectively. Furthermore, consider boundary conditions of the following
form
\begin{equation}    \label{bc}
    A F + B F' =0,
\end{equation}
where $A$ and $B$ are complex $n\times n$ matrices. The \emph{on-shell scattering
matrix at energy $E>0$} is defined as
\begin{equation}    \label{SE}
    S_{A,B}(E) = - (A+i\sqrt{E}B)^{-1}(A-i\sqrt{E}B),
\end{equation}
which exists and is unitary, provided the $n\times 2n$ matrix $(A,B)$ has
maximal rank (i.e., rank $n$) and $AB^\dagger$ is hermitian. These requirements for $A$
and $B$ guarantee that the corresponding Laplace operator is self-adjoint
on $L^2(\cG)$ (with Lebesgue measure). Observe that under these conditions the
boundary conditions~\eqref{bc} are equivalent to any boundary conditions of the form
$CAF+CBF'=0$ where $C$ is invertible. Also $S_{CA, CB}(E) = S_{A,B}(E)$ holds true.
For the Walsh process at hand, concrete choices for $A$ and $B$ are given by
\begin{equation*}
    A^w=\begin{pmatrix}
            0&0&0&\ldots&0&0\\
            1&-1&0&\ldots&0&0\\
            0&1&-1&\ldots&0&0\\
            0&0&1&\ldots &0&0\\
            \vdots&\vdots&\vdots&\ddots&\vdots&\vdots\\
            0&0&0&\ldots&1&-1
        \end{pmatrix},\ 
    B^w=\begin{pmatrix}
            b_1&b_2&b_3&\ldots&b_{n-1}&b_n\\
            0&0&0&\ldots&0&0\\
            0&0&0&\ldots&0&0\\
            \vdots&\vdots&\vdots&\ddots&\vdots&\vdots\\
            0&0&0&\ldots&0&0\\
            0&0&0&\ldots&0&0
        \end{pmatrix}.
\end{equation*}
Then~\eqref{bc} is the condition that $f$ is actually continuous at the vertex $v$,
i.e., $f(v_k)=f(v_m)$, $k$, $m=1,\dotsc,n$, and that~\eqref{eq2.2} is valid (with
$w_k=b_k$, $k\in\sn$). Obviously, $(A^w, B^w)$ has maximal rank. However, $A^w
(B^w)^\dagger$ is hermitian if and only if all $b_k$ are equal (i.e., $b_k = 1/n$,
$\kn$). Nevertheless, \eqref{SE} exists also in the non-hermitian case, and $S_{A^w,
B^w}(E)=S^w$ holds for all $E>0$ due to the relations $A^w S^w = - A^w$, and $B^w
S^w = B^w$. In addition, the following relations are valid:
\begin{align}
         \SW &= \bigl(\SW\bigr)^{-1}, \label{eq2.5}\\
    \det \SW &= (-1)^{n+1}.           \label{eq2.6}
\end{align}
Furthermore, $S^w$ is a contraction, and the associated Laplace operator is
m-dissipative on $L^2(\cG)$ since trivially $\text{Im}(AB^\dagger)=0$, cf.\ Theorem~2.5
in~\cite{KoPo09c}. When all $b_k$ are equal, such that $A^w(B^w)^\dag=0$, then $S^w$ is
an involutive, orthogonal matrix of the form
\begin{equation}	\label{eq2.6a}
	S^w = - \1 + 2P_n.
\end{equation}
$P_n$ is the matrix whose entries are equal to $1/n$.
$P_n$ is a real orthogonal projection, that is $P_n = P_n^\dag = P_n^t = P_n^2$.
It is also of rank $1$, that is $\dim \text{Ran} P_n = 1$.
The relation~\eqref{eq2.4b} giving the resolvent in terms of the
scattering matrix is actually valid in the more general context of arbitrary metric
graphs and boundary conditions of the form~\eqref{bc}, see~\cite{KoSc06, KoPo09c}.
\end{remark}

It is straightforward to compute the inverse Laplace transform of the right hand side
of formula~\eqref{eq2.4b}, and this yields the following result.

\begin{lemma}   \label{lem2.5}
For $t>0$, $\xi$, $\eta\in\cG$ the transition density of the Walsh process on $\cG$ is
given by
\begin{align}
    \pW(t,\xi,\eta)
        &= p^D(t,\xi,\eta) + \sum_{k,m=1}^n \onel(\xi)\,
                2w_m\, g\bigl(t,d_v(\xi,\eta)\bigr)\,\onel[m](\eta), \label{eq2.7}\\
        &= p(t,\xi,\eta) + \sum_{k,m=1}^n \onel(\xi)\,
                \SW_{km} g\bigl(t,d_v(\xi,\eta)\bigr)\,\onel[m](\eta). \label{eq2.8}
\end{align}
$p(t,\xi,\eta)$ is defined in equation~\eqref{eq1.16}, $p^D(t,\xi,\eta)$ in
equation~\eqref{eq1.20}, $g$ is the Gau{\ss}-kernel~\eqref{eq1.4}, and $d_v$ is
defined in equation~\eqref{eq1.15}.
\end{lemma}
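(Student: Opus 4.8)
The plan is to invert the Laplace transform of the resolvent kernel given in Lemma~\ref{lem2.3}, formula~\eqref{eq2.4b}. Since $r^D_\gl$ in~\eqref{eq1.22} is already known to be the Laplace transform of the Dirichlet heat kernel $p^D$, the only work is to invert the singular part of the kernel. I would split~\eqref{eq2.4b} as $\rW_\gl(\xi,\eta) = r_\gl(\xi,\eta) + \sum_{k,m} e_{\gl,k}(\xi)\,\SW_{km}\,\frac{1}{\sgl}\,e_{\gl,m}(\eta)$ and observe, using $e_{\gl,k}(\xi) = \onel(\xi)\,e^{-\sgl d(\xi,v)}$ from~\eqref{eq1.14}, that the product $e_{\gl,k}(\xi)\,\frac{1}{\sgl}\,e_{\gl,m}(\eta)$ equals $\onel(\xi)\,\onel[m](\eta)\,\frac{1}{\sgl}\,e^{-\sgl\,d_v(\xi,\eta)}$ on the support where $\xi\in l_k$ and $\eta\in l_m$, by the very definition~\eqref{eq1.15} of $d_v$.

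Next I would record the basic inversion formula. The Laplace transform~\eqref{LT}, rewritten in its resolvent form, gives $\int_0^\infty e^{-\gl t}\, g(t,a)\,dt = \frac{1}{\sgl}\,e^{-\sgl a}$ for $a>0$. Applying this termwise, the inverse Laplace transform of $\frac{1}{\sgl}\,e^{-\sgl\,d_v(\xi,\eta)}$ is precisely $g(t,d_v(\xi,\eta))$. Substituting $\SW_{km} = 2w_m - \gd_{km}$ from~\eqref{eq2.4c}, the double sum inverts to $\sum_{k,m=1}^n \onel(\xi)\,(2w_m - \gd_{km})\,g(t,d_v(\xi,\eta))\,\onel[m](\eta)$.

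Now I would assemble the two equivalent expressions. For~\eqref{eq2.8} I simply combine the inverse transform of $r_\gl$, which is $p$ by~\eqref{eq1.16} and~\eqref{LT}, with the double sum, giving the form written with $\SW_{km}$. For~\eqref{eq2.7} I would reorganize by pulling out the diagonal $-\gd_{km}$ term: the piece $-\sum_k \onel(\xi)\,g(t,d_v(\xi,\eta))\,\onel(\eta)$ is exactly $-p_v(t,\xi,\eta)$ by~\eqref{eq1.17}, so that $p - p_v = p^D$ by~\eqref{eq1.20}, leaving the stated $p^D(t,\xi,\eta) + \sum_{k,m}\onel(\xi)\,2w_m\,g(t,d_v(\xi,\eta))\,\onel[m](\eta)$. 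The equivalence of~\eqref{eq2.7} and~\eqref{eq2.8} is then just this rewriting of the diagonal contribution.

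The only genuine subtlety — and the step I would treat most carefully — is the justification that termwise inversion is legitimate, i.e.\ that the sum and the inverse Laplace transform may be interchanged. Since the sum over $k,m$ is finite and each summand is, for fixed $\xi,\eta$, the Laplace transform of an explicit nonnegative kernel, this interchange is immediate and one need only remark that the indicator functions $\onel(\xi)$ localize the expression so that at most one pair $(k,m)$ contributes at any given $(\xi,\eta)$. No heat-kernel-table formula involving $\erfc$ at complex arguments is needed; the elementary identity~\eqref{LT} suffices, which is precisely the computational simplification the authors emphasize in the introduction.
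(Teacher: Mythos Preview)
Your proposal is correct and follows exactly the approach the paper indicates: the paper merely states that it is ``straightforward to compute the inverse Laplace transform of the right hand side of formula~\eqref{eq2.4b}'', and you have filled in precisely those details via the elementary identity $\int_0^\infty e^{-\gl t} g(t,a)\,dt = (\sgl)^{-1} e^{-\sgl a}$ and the algebraic rearrangement $p-p_v=p^D$. Your final paragraph about $\erfc$ at complex arguments is a slight misattribution (that remark in the introduction concerns the computation in subsection~\ref{ssect1.4}, not this one), but it does not affect the argument.
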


\begin{remark}  \label{rem2.6}
Alternatively $\pW(t,\xi,\eta)$ can also be written as
\begin{equation}    \label{eq2.9}
\begin{split}
    \pW(t,\xi,\eta)
        &= p(t,\xi,\eta)\\
        &\  + \sum_{k,m=1}^n \onel(\xi)
            \int_0^t P_\xi(H_v\in ds)\, \SW_{km}\, g\bigl(t-s,d(v,\eta)\bigr)\,\onel[m](\eta).
\end{split}
\end{equation}
Even though this formula appears somewhat more complicated than~\eqref{eq2.8}, it
exhibits the role of the scattering matrix $\SW$, that is, it describes more clearly what
happens when the process hits the vertex.
\end{remark}

\section{The Elastic Walsh Process} \label{sect3}
In this section we consider the boundary conditions~\eqref{eq1.1} with $0<a<1$ and
$c=0$. The corresponding stochastic process, which we will denote by $\We$, is constructed
from the Walsh process $W$ of the previous section in a similar way as the elastic Brownian
motion on $\R_+$ is constructed from a reflected Brownian motion (cf., e.g., \cite{ItMc63},
\cite[Chapter~2.3]{ItMc74}, \cite[Chapter~6.2]{Kn81}, \cite[Chapter~6.4]{KaSh91}).

In more detail, the construction is as follows.
Consider the Walsh process $W$ as discussed in the previous section. We may continue
to suppose that $W$ has been constructed pathwise from a standard Brownian motion
$B$, as it has been described there. But then the local time of $W$ at the vertex,
denoted by $L^w$, is pathwise equal to the local time of the Brownian motion at the
origin (and we continue to use the normalization determined by~\eqref{eq1.7a}). It
is well-known (e.g., \cite{ItMc74, Kn81, KaSh91, ReYo91}) that $L^w$ has all
properties of a PCHAF as formulated in subsection~\ref{ssect1.5} for the construction
of a subprocess by killing $W$ at the vertex. We continue to denote the rate of the
exponential random variable $S$ used there by $\gb>0$. Let $\We$ be the subprocess
so obtained. In particular (cf.~\ref{ssect1.5}), $\We$ is a Brownian motion on
$\cG$, and in analogy with the case of a Brownian motion on the real line we call
this stochastic process the \emph{elastic Walsh process}. We write $\zeta_{\gb,0}$
for the lifetime of $\We$ (i.e., for the random time corresponding to $\zeta_\gb$ in
subsection~\ref{ssect1.5}).

We proceed to show that the elastic Walsh process $\We$ has a generator $A^e$ with
domain $\cD(A^e)$
which satisfies the boundary conditions as claimed. In other words, we claim that there
exist $a\in(0,1)$ and $b_k\in (0,1)$, $k\in\sn$, with $a + \sum_k b_k =1$, so that
for all $f\in\cD(G)$,
\begin{equation}    \label{eq3.1}
    a f(v) = \sum_{k=1}^n b_k f'(v_k)
\end{equation}
holds. To this end, we calculate $A^e f(v)$ in Dynkin's form. We shall use a notation
similar to the one used in subsection~\ref{ssect1.5}. Namely, let $\hP_v$ and $\hE_v$
denote the probability and expectation, respectively, on the probability space extended by
$(\R_+,\cB(\R_+),P_\gb)$, while the corresponding symbols without $\hat{\,\cdot\,}$
are those for the Walsh process without killing.

For $\gep>0$ and under $\We$ let $\TeW_{v,\gep}$ denote the hitting time of the
complement $B_\gep(v)^c$  of the open ball $B_\gep(v)$ of radius $\gep>0$ with center $v$. Then
$\TeW_{v,\gep} = \TW_{v,\gep} \land \zeta_{\gb,0}$, where as before $\TW_{v,\gep}$
is the hitting time of $B_\gep(v)^c$ by the Walsh process $W$. (Note that $B_\gep(v)^c$
contains the cemetery $\gD$.) We find
\begin{equation}    \label{eq3.1a}
    \hE_v\Bigl(f\bigl(\We(\TeW_{v,\gep})\bigr)\Bigr)
        = \Bigl(\sum_{k=1}^n w_k f_k(\gep)\Bigr)\,\hP_v\bigl(\TW_{v,\gep}<\zeta_{\gb,0}\bigr).
\end{equation}
The probability in the last expression is taken care of by the following lemma.

\begin{lemma}   \label{lem3.1}
For all $\gep$, $\gb>0$,
\begin{equation}    \label{eq3.2}
    \hP_v\bigl(\TW_{v,\gep} < \zeta_{\gb,0}\bigr) = \frac{1}{1+\gep\gb}.
\end{equation}
\end{lemma}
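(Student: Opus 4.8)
The plan is to reduce the stated probability to a Laplace transform of the local time of the Walsh process accumulated up to $\TW_{v,\gep}$, and then to identify that local time, via the pathwise construction, with the quantity treated in Lemma~\ref{lem1.3}. Throughout I use that the lifetime of the elastic process is $\zeta_{\gb,0} = K_S$, where $K$ is the right continuous pseudo-inverse of $\LW$ and $S$ is the independent exponential random variable of rate $\gb$ adjoined in the construction.

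First I would unwind the event $\{\TW_{v,\gep} < \zeta_{\gb,0}\}$. Writing $\tau = \TW_{v,\gep}$ and $\ell = \LW(\tau)$, the monotonicity and a.s.\ continuity of $\LW$ give a pathwise dichotomy: on $\{\ell < S\}$ one has $\LW_t < S$ for all $t$ in a neighbourhood of $\tau$, hence $K_S > \tau$; on $\{\ell > S\}$ one has $\tau \in \{t : \LW_t > S\}$, hence $K_S \le \tau$. Since $S$ is continuous and independent of $W$, the tie $\{\ell = S\}$ is $\hP_v$--null, so that $\{\TW_{v,\gep} < \zeta_{\gb,0}\} = \{\LW(\TW_{v,\gep}) < S\}$ up to a null set. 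Conditioning on the Walsh process (equivalently on $\LW(\TW_{v,\gep})$) and integrating out the exponential $S$ then gives
\begin{equation*}
    \hP_v\bigl(\TW_{v,\gep} < \zeta_{\gb,0}\bigr)
        = \hP_v\bigl(\LW(\TW_{v,\gep}) < S\bigr)
        = E_v\Bigl(e^{-\gb \LW(\TW_{v,\gep})}\Bigr).
\end{equation*}

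It then remains to compute this Laplace transform. By the pathwise construction of $W$ from the standard Brownian motion $B$, the local time $\LW$ at the vertex equals the Brownian local time $L^B$ at the origin, while $\TW_{v,\gep}$ is the first time $|B|$ reaches $\gep$, i.e.\ $H^B_{\{-\gep,+\gep\}}$. Hence $\LW(\TW_{v,\gep})$ has, under $\hP_v$, the law of $L^B\bigl(H^B_{\{-\gep,+\gep\}}\bigr)$ under $Q_0$, which by Lemma~\ref{lem1.3} is exponentially distributed with mean $\gep$. Evaluating the Laplace transform of this law at $\gb$ yields
\begin{equation*}
    E_v\Bigl(e^{-\gb \LW(\TW_{v,\gep})}\Bigr)
        = \frac{\gep^{-1}}{\gb + \gep^{-1}}
        = \frac{1}{1 + \gep\gb},
\end{equation*}
which is the claimed identity.

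I expect the only delicate point to be the first step — the careful passage from $\{\TW_{v,\gep} < K_S\}$ to $\{\LW(\TW_{v,\gep}) < S\}$ — where the continuity of $\LW$ and the independence of $S$ must both be invoked in order to discard the boundary case $\{\LW(\TW_{v,\gep}) = S\}$; everything thereafter is the identification of local times afforded by the construction of $W$, together with Lemma~\ref{lem1.3} and an elementary Laplace transform.
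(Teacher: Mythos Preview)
Your proof is correct and follows essentially the same route as the paper's: both reduce the event to a statement about $\LW(\TW_{v,\gep})$ via the set identity between $\{K_s < \TW_{v,\gep}\}$ and $\{\LW(\TW_{v,\gep}) > s\}$, identify this local time with $L^B(H^B_{\{-\gep,\gep\}})$ through the pathwise construction, and then invoke Lemma~\ref{lem1.3}. The only cosmetic difference is that you pass directly to the Laplace transform $E_v\bigl(e^{-\gb \LW(\TW_{v,\gep})}\bigr)$, whereas the paper first computes $P_v(K^w_s < \TW_{v,\gep}) = e^{-s/\gep}$ for fixed $s$ and then integrates against the exponential law; and you discard the boundary case via $\{\LW(\TW_{v,\gep}) = S\}$ being $\hP_v$--null, while the paper instead observes that $\zeta_{\gb,0}\ne \TW_{v,\gep}$ because $W$ is at $v$ at the former time and at distance $\gep$ at the latter.
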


\begin{proof}
We may consider the Walsh process $W$ as being pathwise constructed from a standard
Brownian motion $B$ on the real line as in the previous section, and we shall use
the notations and conventions from there. Then it is clear that under $P_v$ and
under $\hP_v$, $\TW_{v,\gep}$ has the same law as the hitting time of the point
$\gep$ in $\R_+$ by the reflecting Brownian motion $|B|$ under $Q_0$, that is, as
$H^B_{\{-\gep,\gep\}}$ of the Brownian motion $B$ itself under $Q_0$. Let $K^w$ denote
the right continuous pseudo-inverse of $L^w$. For fixed $s\in\R_+$ we get
\begin{equation*}
    \{K^w_s < \TW_{v,\gep}\} = \{L^w(\TW_{v,\gep})>s\}.
\end{equation*}
Hence
\begin{align*}
    P_v\bigl(K^w_s < \TW_{v,\gep}\bigr)
        &= P_v\bigl(L^w(\TW_{v,\gep})>s\bigr)\\
        &= Q_0\bigl(L^B(H^B_{\{-\gep,\gep\}})>s\bigr).
\end{align*}
In appendix~B of~\cite{BMMG0} is shown with the method
in~\cite[Section~6.4]{KaSh91} that under $Q_0$ the random variable
$L^B(H^B_{\{-\gep,\gep\}})$ is exponentially distributed with mean $\gep$. So we find
\begin{equation*}
    P_v\bigl(K^w_s < \TW_{v,\gep}\bigr) = e^{-s/\gep}.
\end{equation*}
We integrate this relation against the exponential law with rate $\gb$ in the variable $s$,
and obtain
\begin{align*}
    \hP_v\bigl(\zeta_{\gb,0}>\TW_{v,\gep}\bigr)
        &= 1 - \gb\int_0^\infty e^{-\gb s}\,P_v\bigl(K^W_s < \TW_{v,\gep}\bigr)\,ds\\
        &= \frac{1}{1+\gep\gb}.
\end{align*}
We used the fact that due to the continuity of the paths of $W$ we have
$\zeta_{\gb,0}\ne \TW_{v,\gep}$.
\end{proof}

We insert formula~\eqref{eq3.2} into equation~\eqref{eq3.1a}, and obtain
\begin{align*}
    A^e f(v)
        &=  \lim_{\gep\da 0}\, \frac{1}{\hE_v(\TeW_{v,\gep})}
                \,\Bigr(\hE_v\Bigl(f\bigl(\We(\TeW_{v,\gep})\bigr)\Bigr) - f(v)\Bigl)\\
        &=  \lim_{\gep\da 0}\, \frac{1}{\hE_v(\TeW_{v,\gep})}
                \,\Bigl(\frac{1}{1+\gep\gb}\,\sum_{k=1}^n w_k f_k(\gep) - f(v)\Bigr)\\
        &=  \lim_{\gep\da 0}\, \frac{\gep}{\hE_v(\TeW_{v,\gep})}
                \frac{1}{1+\gep\gb}\,\Bigl(\sum_{k=1}^n w_k\,\frac{f_k(\gep)-f(v)}{\gep}
                    -\gb f(v)\Bigr).
\end{align*}
Obviously $\hE_v(\TeW_{v,\gep})\le E_v(\TW_{v,\gep})=\gep^2$ (cf.~lemma~\ref{lem2.1}).
Since the last limit and $f'(v_k)$, $k\in\sn$, exist and are finite, we get as a necessary
condition
\begin{equation}    \label{eq3.3}
    \sum_{k=1}^n w_k f'(v_k) - \gb f(v) =0.
\end{equation}
Thus we have proved the following theorem.

\begin{theorem} \label{thm3.2}
Consider the boundary condition~\eqref{eq1.1} with $a\in(0,1)$,
$b\in [0,1]^n$, and $c=0$. Set
\begin{equation}    \label{eq3.4}
    w_k = \frac{b_k}{1-a},\,\kn,\quad \gb = \frac{a}{1-a},
\end{equation}
and let $\We$ be the elastic Walsh process as constructed above with these
parameters. Then the generator $A^e$ of $\We$ is $1/2$ times the second derivative
on $\cG$ with
domain consisting of those $f\in\Cii$ which satisfy condition~\eqref{eq1.1b}.
\end{theorem}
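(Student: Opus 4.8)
The plan is to combine the necessary boundary condition already extracted via Dynkin's formula with Feller's theorem, and then to pin down the constants by a uniqueness argument. Since $\We$ was constructed in subsection~\ref{ssect1.5} by killing the Walsh process on the scale of its local time at the vertex, it is a Brownian motion on $\cG$ in the sense of definition~\ref{def_BM}. Hence theorem~\ref{thm_Feller} applies: $A^e f=\tfrac12 f''$, and there exist constants $\tilde a$, $\tilde b_k$, $\tilde c\in[0,1]$ with $\tilde a+\tilde c+\sum_k\tilde b_k=1$, $\tilde a\ne 1$, such that $\cD(A^e)$ consists exactly of the $f\in\Cii$ satisfying $\tilde a f(v)+\tfrac{\tilde c}{2}f''(v)=\sum_k\tilde b_k f'(v_k)$. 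The whole task then reduces to showing that $(\tilde a,\tilde b_k,\tilde c)=(a,b_k,0)$.

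First I would rewrite the necessary condition~\eqref{eq3.3}: inserting $w_k=b_k/(1-a)$ and $\gb=a/(1-a)$ from~\eqref{eq3.4} and multiplying by $1-a>0$ turns it into $a f(v)=\sum_k b_k f'(v_k)$, which is precisely~\eqref{eq1.1b} with $c=0$ and the prescribed $a$, $b_k$. Thus every $f\in\cD(A^e)$ lies in the kernel of the functional $L(f)=a f(v)-\sum_k b_k f'(v_k)$ acting on the boundary data $(f(v),f''(v),f'(v_1),\dots,f'(v_n))$, while by Feller's theorem $\cD(A^e)$ is the kernel of the analogous functional $\tilde L$ built from $(\tilde a,\tilde b_k,\tilde c)$.

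The identification then follows from elementary linear algebra once one knows that the boundary-data map $f\mapsto(f(v),f''(v),f'(v_1),\dots,f'(v_n))$ sends $\Cii$ onto $\R^{n+2}$. Granting this, $L$ and $\tilde L$ are honest functionals on $\R^{n+2}$; both are nonzero ($L$ because $a>0$, and $\tilde L$ by the Feller normalization), so the inclusion $\ker\tilde L\subseteq\ker L$ of two hyperplanes forces $\ker\tilde L=\ker L$ and hence $L=\mu\tilde L$ for some $\mu\ne 0$. The vanishing $f''(v)$-coefficient of $L$ then gives $\tilde c=0$, and comparing the remaining coefficients together with the two normalizations $a+\sum_k b_k=1$ and $\tilde a+\sum_k\tilde b_k=1$ yields $\mu=1$, so $(\tilde a,\tilde b_k)=(a,b_k)$. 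This establishes the claimed domain and completes the proof.

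The step I expect to be the main obstacle is precisely the surjectivity of the boundary-data map, i.e.\ that within $\Cii$ one can prescribe $f(v)$, the common value $f''(v)$ of remark~\ref{remC02}, and the $n$ one-sided derivatives $f'(v_k)$ independently. This is what upgrades the kernel inclusion into genuine proportionality of the coefficient vectors, and in particular what forces $\tilde c=0$; it is proved by a routine patching on each edge of functions of the form $x\mapsto(f(v)+f'(v_k)\,x+\tfrac12 f''(v)\,x^2)$ times a smooth cutoff, so no serious difficulty remains beyond the Dynkin computation already carried out above.
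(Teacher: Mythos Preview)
Your argument is correct and follows the same Dynkin-formula route as the paper's primary proof: the computation leading to~\eqref{eq3.3} is exactly what the paper does before stating theorem~\ref{thm3.2}, and you then make explicit the identification step (via Feller's theorem plus surjectivity of the boundary-data map) that the paper leaves implicit when it writes ``Thus we have proved the following theorem.'' Your surjectivity argument is routine and valid.

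One point worth noting: the paper also supplies a second, independent proof immediately after the theorem statement, based on the resolvent identity of lemma~\ref{lem1.6}. There one computes $\hE_v(e^{-\gl\zeta_{\gb,0}})=\gb/(\gb+\sgl)$ (lemma~\ref{lem3.3}), obtains the explicit formula~\eqref{eq3.5} for $R^e_\gl f$, and then verifies directly that $\sum_k w_k (R^e_\gl f)'(v_k)=\gb\,R^e_\gl f(v)$ for every $f\in\Co$; since $R^e_\gl$ maps $\Co$ onto $\cD(A^e)$, the domain follows. This resolvent approach bypasses both the Dynkin calculation at the vertex and the linear-algebra identification you use, trading them for the Laplace-transform computation of lemma~\ref{lem3.3}. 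Your argument has the advantage of being more conceptual and reusable (the same surjectivity lemma handles every boundary condition at once), while the paper's alternative yields the resolvent kernel and scattering matrix as a byproduct.
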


\begin{remark}  \label{rem3.3a}
Note that condition~\eqref{eq1.1a} entails that if $w_k$ and $\gb$ are defined
by~\eqref{eq3.4} then $w_k\in [0,1]$, $\kn$, $\sum_k w_k=1$, and $\gb>0$. Therefore
the choice~\eqref{eq3.4} is consistent with the conditions on these parameters
required by the construction of the elastic Walsh process $\We$.
\end{remark}

Next we compute the resolvent $\ReW$ of the elastic Walsh process. As a byproduct this
will give another proof of theorem~\ref{thm3.2}. Moreover, it will provide us
with an explicit formula for the scattering matrix in this case. In contrast to the
calculations in~\cite[Chapter~2.3]{ItMc74}, \cite[Chapter~6.2]{Kn81} for the
classical case with $\cG=\R_+$, we do not use the first passage time
formula~\eqref{eq1.23}, but instead we use formula~\eqref{eq1.30}. This simplifies
the computation considerably.

Let $f\in\Co$, $\gl>0$, and $\xi\in\cG$. In the present context
formula~\eqref{eq1.30} reads
\begin{equation*}
    R^e_\gl f(\xi)
        = \RW_\gl f(\xi) - e_\gl(\xi)\,\hE_v\bigl(e^{-\gl \zeta_{\gb,0}}\bigr)\, \RW_\gl f(v),
\end{equation*}
where $\RW$ is the resolvent of the Walsh process without killing, and $e_\gl$ is
defined in~\eqref{eq1.14}. The Laplace transform of the density of $\zeta_{\gb,0}$ under
$\hP_v$ is readily computed:

\begin{lemma}   \label{lem3.3}
For all $\gl$, $\gb>0$,
\begin{equation*}
    \hE_v\bigl(e^{-\gl \zeta_{\gb,0}}\bigr) = \frac{\gb}{\gb + \sgl}.
\end{equation*}
\end{lemma}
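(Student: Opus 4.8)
The plan is to reduce the whole computation to the Laplace transform of the inverse local time already recorded in lemma~\ref{lem1.2}. Recall from the construction in subsection~\ref{ssect1.5} that the lifetime of $\We$ is $\zeta_{\gb,0} = K^w_S$, where $K^w$ is the right continuous pseudo-inverse of the local time $L^w$ of the Walsh process at the vertex, and $S$ is the exponential random variable of rate $\gb$ living on the adjoined factor $(\R_+,\cB(\R_+),P_\gb)$, hence independent of $W$ under $\hP_v$.

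First I would observe that, since $W$ may be taken to be pathwise constructed from the standard Brownian motion $B$ of subsection~\ref{ssect1.2}, the local time $L^w$ at the vertex is pathwise equal to the local time $L^B$ of $B$ at the origin, and consequently $K^w$ coincides pathwise with $K^B$. Under $P_v$ (equivalently, under $Q_0$ for $B$) this gives, for every fixed $r\ge 0$, the identity
\begin{equation*}
    E_v\bigl(e^{-\gl K^w_r}\bigr) = E^Q_0\bigl(e^{-\gl K^B_r}\bigr) = e^{-\sgl\, r},
\end{equation*}
which is precisely formula~\eqref{eq1.13} of lemma~\ref{lem1.2}.

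Next I would exploit the independence of $S$ and $W$ by conditioning on $S=r$ and then integrating against the exponential law $P_\gb$. Using the product structure $\hP_v = P_v\otimes P_\gb$ together with Fubini's theorem,
\begin{equation*}
    \hE_v\bigl(e^{-\gl\zeta_{\gb,0}}\bigr)
        = \hE_v\bigl(e^{-\gl K^w_S}\bigr)
        = \gb\int_0^\infty e^{-\gb r}\,E_v\bigl(e^{-\gl K^w_r}\bigr)\,dr
        = \gb\int_0^\infty e^{-(\gb+\sgl)r}\,dr,
\end{equation*}
and the elementary evaluation of this last integral yields $\gb/(\gb+\sgl)$, as claimed.

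I do not expect a genuine obstacle here: the entire content is the reduction to lemma~\ref{lem1.2}, after which the result follows from a one-line integration. The only points deserving a word of care are the pathwise identification $K^w = K^B$ (valid because $L^w = L^B$ pathwise by the construction of $W$ from $B$), and the use of the independence of $S$ from $W$, which is built into the product structure $\hP_v = P_v\otimes P_\gb$ of subsection~\ref{ssect1.5} and justifies the passage to a conditional expectation followed by integration over the exponential law.
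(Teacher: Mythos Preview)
Your proof is correct and follows essentially the same approach as the paper: both identify $K^w$ pathwise with $K^B$ via the construction of $W$ from $B$, invoke lemma~\ref{lem1.2} to obtain $E_v\bigl(e^{-\gl K^w_r}\bigr)=e^{-\sgl\,r}$, and then integrate against the exponential law of $S$ to reach $\gb/(\gb+\sgl)$. Your write-up is slightly more explicit about the use of Fubini and the product structure $\hP_v=P_v\otimes P_\gb$, but the argument is otherwise identical.
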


\begin{proof}
As remarked before, we may consider $L^w$ to be equal to the local time at the
origin of the Brownian motion $B$ underlying the construction of $W$, and therefore
the analogous statement is true for the right continuous pseudo-inverse $K^w$ of $L^w$.
As above let $K^B$ denote the right continuous pseudo-inverse of $L^B$ (cf.~\ref{ssect1.2}).
Then for $s\in\R_+$,
\begin{align*}
    E_v\bigl(e^{-\gl K^w_s}\bigr)
        &= E^Q_0\bigl(e^{-\gl K^B_s}\bigr)\\
        &= e^{-\sgl s},
\end{align*}
where we used lemma~\ref{lem1.2}. Hence
\begin{equation*}
    \hE_v\bigl(e^{-\gl \zeta_{\gb,0}}\bigr)
        = \gb \int_0^\infty e^{-(\gb+\sgl) t}\,dt,
\end{equation*}
which proves the lemma.
\end{proof}

With lemma~\ref{lem3.3} we obtain the following formula
\begin{equation}    \label{eq3.5}
    R^e_\gl f(\xi)
        = \RW_\gl f(\xi) - \frac{\gb}{\gb + \sgl}\,e_\gl(\xi)\, \RW_\gl f(v).
\end{equation}
Note that $\RW_\gl f$ is in the domain of the generator of the Walsh process, and
therefore satisfies the boundary condition~\eqref{eq2.2}:
\begin{equation*}
    \sum_{k=1}^n w_k\bigl(\RW_\gl f\bigr)'(v_k) = 0.
\end{equation*}
On the other hand, we obviously have $e_\gl'(v_k) = -\sgl$ for all $k\in\sn$.
Thus with $\sum_{k=1}^n w_k =1$ we find,
\begin{equation*}
    \sum_{k=1}^n w_k \bigl(R^e_\gl f\bigr)'(v_k)
                    = \gb\,\frac{\sgl}{\gb+\sgl}\,\RW_\gl f(v),
\end{equation*}
while equation~\eqref{eq3.5} yields for $\xi=v$
\begin{equation*}
    R^e_\gl f(v) = \frac{\sgl}{\gb+\sgl}\,\RW_\gl f(v).
\end{equation*}
The last two equations show that for all $f\in\Co$, $\gl>0$, we have
\begin{equation*}
    \sum_{k=1}^n w_k \bigl(R^e_\gl f\bigr)'(v_k) = \gb\, R^e_\gl f(v).
\end{equation*}
Since for every $\gl>0$, $R^e_\gl$ maps $\Co$ onto the domain of the generator
of $\We$, we have another proof of theorem~\ref{thm3.2}.

Upon insertion of the expressions for the resolvent kernels of the Walsh process,
equations~\eqref{eq2.3}, and \eqref{eq2.4}, with the same notation as in
lemma~\ref{lem2.3} we immediately obtain the following result:

\begin{lemma}   \label{lem3.4}
For $\gl>0$, $\xi$, $\eta\in\cG$ the resolvent kernel of the elastic Walsh process $\We$
is given by
\begin{subequations}    \label{eq3.6}
\begin{align}
    \reW_\gl(\xi,\eta)
        &= r^D_\gl(\xi,\eta) + \sum_{k,m=1}^n e_{\gl,k}(\xi)\,2w_m
            \,\frac{1}{\gb+\sgl}\,e_{\gl,m}(\eta)  \label{eq3.6a}\\
        &= r_\gl(\xi,\eta) + \sum_{k,m=1}^n e_{\gl,k}(\xi)\,\SeW_{km}(\gl)
                            \,\frac{1}{\sgl}\, e_{\gl,m}(\eta),\label{eq3.6b}
\end{align}
with the scattering matrix $\SeW$
\begin{equation}    \label{eq3.6c}
    \SeW_{km}(\gl) = 2\,\frac{\sgl}{\gb+\sgl}\,w_m - \gd_{km},\qquad \gl>0,\,k,m\in\sn.
\end{equation}
\end{subequations}
\end{lemma}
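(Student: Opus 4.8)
The plan is to extract the kernel directly from the scalar resolvent identity~\eqref{eq3.5} and then reduce everything to algebra on the Walsh kernels of Lemma~\ref{lem2.3}. Since $\RW_\gl f(v) = \int_\cG \rW_\gl(v,\eta)\,f(\eta)\,d\eta$ by~\eqref{eq2.3a}, I would rewrite~\eqref{eq3.5} as a single integral against $f$ and read off
\begin{equation*}
    \reW_\gl(\xi,\eta) = \rW_\gl(\xi,\eta) - \frac{\gb}{\gb+\sgl}\,e_\gl(\xi)\,\rW_\gl(v,\eta).
\end{equation*}
The rest is bookkeeping aimed at matching this against~\eqref{eq3.6a} and~\eqref{eq3.6b}.

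First I would bring the correction term into the $\sum_{k,m}$ form of the Walsh kernel. Writing $e_\gl(\xi)=\sum_k e_{\gl,k}(\xi)$ and using~\eqref{eq2.3b} in the form $\rW_\gl(v,\eta)=\sum_m 2w_m\,\sgl^{-1}e_{\gl,m}(\eta)$, the product $e_\gl(\xi)\,\rW_\gl(v,\eta)$ becomes the double sum $\sum_{k,m}e_{\gl,k}(\xi)\,2w_m\,\sgl^{-1}e_{\gl,m}(\eta)$. By~\eqref{eq2.4b}--\eqref{eq2.4c} the Walsh kernel carries this exact $2w_m$-block (plus a diagonal $-\gd_{km}$-block), so the two $2w_m$-blocks collapse with the scalar prefactor $1-\gb/(\gb+\sgl)=\sgl/(\gb+\sgl)$; after multiplying by $\sgl^{-1}$ this produces the coefficient $2w_m/(\gb+\sgl)$ appearing in~\eqref{eq3.6a}.

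The one step that needs care is the treatment of the diagonal block, which is where I expect the only genuine bookkeeping to live. I would observe that $\sgl^{-1}\sum_k e_{\gl,k}(\xi)\,e_{\gl,k}(\eta)$ is precisely $r_{v,\gl}(\xi,\eta)$ of~\eqref{eq1.22b}, because $e_{\gl,k}$ is supported on $l_k$ and $d(\xi,v)+d(\eta,v)=d_v(\xi,\eta)$ when $\xi$ and $\eta$ share an edge. Hence the $-\gd_{km}$-block converts $r_\gl$ into $r^D_\gl=r_\gl-r_{v,\gl}$ via~\eqref{eq1.22}, yielding~\eqref{eq3.6a}. For the second form~\eqref{eq3.6b} I would simply re-expand $r^D_\gl=r_\gl-r_{v,\gl}$, reinstating the $-\sgl^{-1}\gd_{km}$ term, and fold it together with the $2w_m$-term into the single coefficient $\sgl^{-1}\bigl(2\,\sgl(\gb+\sgl)^{-1}w_m-\gd_{km}\bigr)=\sgl^{-1}\SeW_{km}(\gl)$, which is exactly the scattering-matrix form~\eqref{eq3.6c}. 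No analytic input is needed beyond~\eqref{eq3.5} and the already-established Walsh kernels; the whole argument is an identity manipulation, so the work consists entirely in keeping the $\gd_{km}$- and $2w_m$-blocks straight through the two rearrangements.
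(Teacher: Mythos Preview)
Your proposal is correct and follows essentially the same route as the paper: the paper simply states that upon inserting the Walsh resolvent kernels~\eqref{eq2.3} and~\eqref{eq2.4} into formula~\eqref{eq3.5} one ``immediately obtains'' the result, and your argument supplies precisely those details---extracting the kernel identity from~\eqref{eq3.5}, combining the $2w_m$-blocks via $1-\gb/(\gb+\sgl)=\sgl/(\gb+\sgl)$, and recognizing the diagonal $-\gd_{km}$-block as $r_{v,\gl}$ to pass between the $r_\gl$ and $r^D_\gl$ forms.
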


\begin{remark}  \label{rem3.5}
Note that in contrast to the case of the Walsh process, this time the scattering
matrix is not constant with respect to $\gl>0$. Also, when $\gb=0$,
formula~\eqref{eq2.4c} is recovered, as it should be. In analogy with the discussion
in remark~\ref{rem2.4}, the boundary conditions for the elastic Walsh process is
given by the matrices
\begin{equation*}
    A^e = \begin{pmatrix}
             0&0&0&\ldots&0&\gb\\
             1&-1&0&\ldots&0&0\\
             0&1&-1&\ldots&0&0\\
             0&0&1&\ldots &0&0\\
             \vdots&\vdots&\vdots&\ddots&\vdots&\vdots\\
             0&0&0&\ldots&1&-1
         \end{pmatrix},\ 
    B^e=\begin{pmatrix}
            w_1&w_2&w_3&\ldots&w_{n-1}&w_n\\
            0&0&0&\ldots&0&0\\
            0&0&0&\ldots&0&0\\
            0&0&0&\ldots&0&0\\
            \vdots&\vdots&\vdots&\ddots&\vdots&\vdots\\
            0&0&0&\ldots&0&0
        \end{pmatrix},
\end{equation*}
such that
\begin{align*}
    \SeW(\gl) &= S_{A^e,B^e}(E=-2\gl)\\
              &= - (A^e+\sgl B^e)^{-1}\,(A^e-\sgl B^e).
\end{align*}
Observe that for $k$, $m\in\{1,\dotsc,n\}$ the matrix element $\SeW_{km}(\gl)$
of the scattering matrix is obtained from the resolvent kernel as
\begin{equation*}
    \SeW_{km}(\gl)
        = \sgl\,\lim_{\xi,\,\eta\to v}\bigl(\reW_\gl(\xi,\eta) - r_\gl(\xi,\eta)\bigr),
\end{equation*}
where the limit on the right hand side is taken in such a way that $\xi$, $\eta$ converge
to $v$ along the edges $l_k$, $l_m$ respectively. $\SeW_{km}(\gl)$ in turn fixes the
data $w_m$ and $\gb$, e.g., via the behavior for large $\gl$, that is from the behavior
at ``large energies''
\begin{equation*}
    w_m = \frac{1}{2}\bigl(\gd_{km} + \lim_{\gl'\ua\infty} \SeW_{km}(\gl')\bigr),
            \quad \text{for all $k\in\sn$},
\end{equation*}
and
\begin{equation*}
    \gb = \sgl\Bigl(\frac{\gd_{km}+\lim_{\gl'\ua\infty}\SeW_{km}(\gl')}
            {\gd_{km}+\SeW_{mm}(\gl)}-1\Bigr),\quad \text{for all $\gl$, and all $k$, $m\in\sn$}.
\end{equation*}
Alternatively, the data can be obtained from the small $\gl$ behavior, that is the
threshold behavior, of the scattering matrix, since from
\begin{equation*}
    \frac{w_m}{\gb}
        = \lim_{\gl \da 0} \frac{1}{2\sgl}\bigl(\SeW_{km}(\gl)+\gd_{km}\bigr)
            \qquad \text{for all $k\in\sn$},
\end{equation*}
we obtain
\begin{equation*}
    \gb^{-1} = \frac{1}{2\sgl}\Bigl(\sum_{m=1}^n\SeW_{km}(\gl)+1\Bigr)
            \qquad \text{for all $k\in\sn$},
\end{equation*}
and therefore
\begin{equation*}
    w_m = \frac{\lim_{\gl\da 0} \gl^{-1/2}\bigl(\SeW_{km}(\gl)+\gd_{km}\bigr)}
            {\lim_{\gl\da 0} \gl^{-1/2}\Bigl(\sum_m\SeW_{k'm}(\gl)+1\Bigr)}
            \qquad \text{for all $k$, $k'\in\sn$}.
\end{equation*}

Furthermore we remark that in the context of quantum mechanics in the self-adjoint
case $w_k=1/n$, $\kn$, the boundary conditions of the elastic Walsh process are
interpreted as the presence of a $\gd$--potential of strength $\gb$ at the vertex.
\end{remark}

In order to compute expressions for the transition kernel of the elastic Walsh
process, we use the following two inverse Laplace transforms which follow from
formulae (5.3.4) and (5.6.12) in~\cite{ErMa54a} (cf.\ also appendix~C
in~\cite{BMMG0}) ($\gl>0$, $t\ge 0$, $x\ge0$):
\begin{align}
    \frac{\sgl}{\gb+\sgl}\quad
        &\mathop{\longrightarrow}^{\cL^{-1}}\quad
            \gep_0(dt) - \gb\Bigl(\frac{1}{\sqrt{2\pi t}}-\frac{\gb}{2}e^{\gb^2 t/2}
            \erfc\Bigl(\gb\sqrt{\frac{t}{2}}\Bigr)\Bigr)\,dt,\label{eq3.7}\\
    \frac{1}{\gb+\sgl}\,e^{-\sgl x}\quad
        &\mathop{\longrightarrow}^{\cL^{-1}}\quad
            g(t,x) - \frac{\gb}{2}\,e^{\gb x+ \gb^2 t/2}
            \erfc\Bigl(\frac{x}{\sqrt{2t}}+\gb\sqrt{\frac{t}{2}}\Bigr).\label{eq3.8}
\end{align}
Then the inverse Laplace transform of the scattering matrix $\SeW$ is given by
the following measures on $\R_+$:
\begin{equation}    \label{eq3.9}
 \begin{split}
    \seW_{km}(dt)
        = (2w_m-\gd_{km})\,\gep_0(dt) &- 2w_m\gb\,\frac{1}{\sqrt{2\pi t}}\,dt\\
        & + w_m\gb^2 e^{\gb^2t/2}\,\erfc\Bigl(\gb\sqrt{\frac{t}{2}}\Bigr)\,dt,
\end{split}
\end{equation}
with $k$, $m\in\sn$. Moreover, for $t>0$, $x\ge 0$, let us introduce
\begin{equation}    \label{eq3.10}
    g_{\gb,0}(t,x) = g(t,x) - \frac{\gb}{2}\,e^{\gb x+ \gb^2 t/2}
            \erfc\Bigl(\frac{x}{\sqrt{2t}}+\gb\sqrt{\frac{t}{2}}\Bigr).
\end{equation}

\begin{lemma}   \label{lem3.6}
For $t>0$, $\xi$, $\eta\in\cG$, the transition density $\peW$ of the elastic Walsh process
is given by
\begin{equation} \label{eq3.11}
    \peW(t,\xi,\eta)
        = p^D(t,\xi,\eta) + \sum_{k,m=1}^n \onel(\xi)\, 2w_m\,g_{\gb,0}\bigl(t,d_v(\xi,\eta)\bigr)\,
            \onel[m](\eta),
\end{equation}
and alternatively by
\begin{equation}    \label{eq3.12}
\begin{split}
    \peW(t,\xi,\eta)
        = p(t,\xi,\eta) + \sum_{k,m=1}^n &\onel(\xi)\,\Bigl(\int_0^t P_\xi(\TW_v\in ds)\\
            &\times \Bigl(\seW_{km}*g\bigl(\cdot,d(v,\eta)\bigr)\Bigr)(t-s)\,
                \Bigr)\,\onel[m](\eta),
\end{split}
\end{equation}
where $*$ denotes convolution.
\end{lemma}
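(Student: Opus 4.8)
The plan is to obtain both expressions by inverting, in the variable $\gl\mapsto t$, the two forms of the resolvent kernel $\reW_\gl$ established in lemma~\ref{lem3.4}. Since the resolvent density is the Laplace transform of the transition density, i.e.\ $\reW_\gl(\xi,\eta)=\int_0^\infty e^{-\gl t}\,\peW(t,\xi,\eta)\,dt$ for all $\gl>0$, it suffices to identify the inverse Laplace transforms of the right hand sides of~\eqref{eq3.6a} and~\eqref{eq3.6b} term by term.

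To prove~\eqref{eq3.11} I would start from~\eqref{eq3.6a}. The first summand $r^D_\gl(\xi,\eta)$ is, as recorded just after~\eqref{eq1.22b}, the Laplace transform of the Dirichlet heat kernel $p^D(t,\xi,\eta)$ of~\eqref{eq1.20}. For the second summand I would use the definition~\eqref{eq1.14} of $e_\gl$ together with~\eqref{eq1.15} to write, for $\xi\in l_k$ and $\eta\in l_m$,
\begin{equation*}
    e_{\gl,k}(\xi)\,e_{\gl,m}(\eta)
        = e^{-\sgl\,(d(\xi,v)+d(v,\eta))}
        = e^{-\sgl\,d_v(\xi,\eta)}.
\end{equation*}
Hence the inner factor equals $\frac{1}{\gb+\sgl}\,e^{-\sgl\,d_v(\xi,\eta)}$, and the inverse Laplace transform~\eqref{eq3.8}, applied with $x=d_v(\xi,\eta)$, identifies it as $g_{\gb,0}\bigl(t,d_v(\xi,\eta)\bigr)$ with $g_{\gb,0}$ from~\eqref{eq3.10}. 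Summing over $k$, $m$ with the weights $2w_m$ and reinstating the edge indicators $\onel$, $\onel[m]$ then yields precisely~\eqref{eq3.11}.

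To prove~\eqref{eq3.12} I would invert~\eqref{eq3.6b} instead, this time exploiting the convolution theorem for Laplace transforms. The leading term $r_\gl(\xi,\eta)$ is the transform of $p(t,\xi,\eta)$ by~\eqref{eq1.16} and~\eqref{eq1.22a}. For the scattering term I would factor the product $e_{\gl,k}(\xi)\,\SeW_{km}(\gl)\,\frac{1}{\sgl}\,e_{\gl,m}(\eta)$ into three transforms whose inverses are already available: by~\eqref{eq1.14}, \eqref{DFT} and~\eqref{LT}, $e_{\gl,k}(\xi)=e^{-\sgl\,d(\xi,v)}$ is the transform of $P_\xi(\TW_v\in ds)$; by~\eqref{eq3.9}, $\SeW_{km}(\gl)$ is the transform of the measure $\seW_{km}$; and $\frac{1}{\sgl}\,e_{\gl,m}(\eta)=\frac{1}{\sgl}\,e^{-\sgl\,d(v,\eta)}$ is, again by~\eqref{eq1.16} and~\eqref{eq1.22a}, the transform of $g(\cdot,d(v,\eta))$. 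Applying the convolution theorem first to the last two factors gives $\seW_{km}*g(\cdot,d(v,\eta))$, and a second application together with the factor $e_{\gl,k}(\xi)$ produces the iterated convolution $\int_0^t P_\xi(\TW_v\in ds)\,\bigl(\seW_{km}*g(\cdot,d(v,\eta))\bigr)(t-s)$, which is~\eqref{eq3.12}.

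The main obstacle is analytic rather than conceptual: one must justify the termwise inversion and the two applications of the convolution theorem, keeping in mind that $\seW_{km}$ is a \emph{measure} carrying an atom $(2w_m-\gd_{km})\,\gep_0(dt)$ at the origin, so that the convolutions are those of a finite signed measure against locally integrable kernels and the atomic part correctly reproduces the $p(t,\xi,\eta)$ contribution. One should also track the edge indicators carefully, so that $d_v(\xi,\eta)$ and $d(v,\eta)$ are evaluated on the intended edges $l_k$, $l_m$. Finally, it is prudent to check directly that~\eqref{eq3.11} and~\eqref{eq3.12} agree; this is automatic from lemma~\ref{lem3.4}, but a hands-on verification via $p^D=p-p_v$ of~\eqref{eq1.20} and the expansion~\eqref{eq3.10} of $g_{\gb,0}$ provides a useful consistency check.
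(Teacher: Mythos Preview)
Your proposal is correct and follows essentially the same approach as the paper: the paper does not spell out a proof of lemma~\ref{lem3.6} but simply records the inverse Laplace transforms~\eqref{eq3.7}--\eqref{eq3.10} and then states the lemma, so the intended argument is precisely the termwise inversion of~\eqref{eq3.6a} and~\eqref{eq3.6b} that you carry out. One small inaccuracy in your closing paragraph: the atomic part $(2w_m-\gd_{km})\,\gep_0$ of $\seW_{km}$ does not ``reproduce the $p(t,\xi,\eta)$ contribution'' --- that term is already the inverse transform of $r_\gl$ --- rather, after the two convolutions it yields $(2w_m-\gd_{km})\,g\bigl(t,d_v(\xi,\eta)\bigr)$, which is exactly what accounts for the difference between the leading terms $p$ and $p^D$ in the two formulae via $p^D=p-p_v$.
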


\section{The Walsh Process with a Sticky Vertex}  \label{sect4}
In this section  we construct Brownian motions on $\cG$ with $a=0$ in the boundary
condition~\eqref{eq1.1}.

Consider the Walsh process $W$ on $\cG$ from section~\ref{sect2} together with
a right continuous, complete filtration $\cFW$, relative to which it is strongly
Markovian. Furthermore, we denote its local time at the vertex $v$ by $L^w$ (cf.\
section~\ref{sect3}).

Again we follow closely the recipe given by It\^o and McKean in~\cite{ItMc63} (cf.\
also \cite[Section~6.2]{Kn81}) for the case of a Brownian motion on the half line.
For $\gg\ge 0$ introduce a new time scale $\tau$ by
\begin{equation}    \label{eq4.1}
    \tau^{-1}: t\mapsto t + \gg L^w_t,    \qquad t\ge 0.
\end{equation}
Since $L^w$ is non-decreasing, $\tau^{-1}$ is strictly increasing. Moreover, we have
$\tau^{-1}(0)=0$ and $\lim_{t\to+\infty}\tau^{-1}(t) = +\infty$, which implies that
$\tau$ exists, and is strictly increasing from $\R+$ onto $\R_+$, too. As is shown
in~\cite[p.~160]{Kn81}, the additivity of $L^w$ entails the additivity of $\tau$ on
its own time scale, i.e.:

\begin{lemma}   \label{lem4.1}
For all $s$, $t\ge 0$, a.s.\ the following formula holds true
\begin{equation}    \label{eq4.2}
    \tau(s+t) = \tau(s) + \tau(t)\comp \theta_{\tau(s)}.
\end{equation}
\end{lemma}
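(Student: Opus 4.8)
The plan is to establish the additivity formula~\eqref{eq4.2} by transferring the known additivity of $L^w$ (formula~\eqref{eq1.9}) through the definition of $\tau$ as the inverse of the strictly increasing map $\tau^{-1}(t) = t + \gg L^w_t$. The starting point is the observation that $\tau$ and $\tau^{-1}$ are genuine inverse functions: since $\tau^{-1}$ is continuous, strictly increasing, starts at $0$, and tends to $+\infty$, the inverse $\tau$ is a well-defined, continuous, strictly increasing bijection of $\R_+$ onto itself. The key identity to exploit is $\tau^{-1}(\tau(r)) = r$ for all $r\ge 0$, together with the additivity of the functional $L^w$.

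First I would apply $\tau^{-1}$ to the right-hand side of~\eqref{eq4.2} and show that it equals $s+t$; since $\tau^{-1}$ is injective and $\tau^{-1}(\tau(s+t)) = s+t$ by definition, this suffices. Writing $u = \tau(s)$, I would compute
\begin{equation*}
    \tau^{-1}\bigl(\tau(s) + \tau(t)\comp\theta_{\tau(s)}\bigr)
        = \tau^{-1}\bigl(u + \tau(t)\comp\theta_u\bigr).
\end{equation*}
The main task is then to evaluate $\tau^{-1}$ at the shifted argument. Denote $r = \tau(t)\comp\theta_u$, the value of $\tau(t)$ evaluated on the shifted path $\theta_u\go$. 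Using the definition $\tau^{-1}(u+r) = (u+r) + \gg L^w_{u+r}$ and the additivity~\eqref{eq1.9} of the local time, namely $L^w_{u+r} = L^w_u + L^w_r\comp\theta_u$, I would obtain
\begin{equation*}
    \tau^{-1}(u+r) = \bigl(u + \gg L^w_u\bigr) + \bigl(r + \gg L^w_r\comp\theta_u\bigr)
        = \tau^{-1}(u) + \bigl(\tau^{-1}(r)\bigr)\comp\theta_u.
\end{equation*}
Since $u = \tau(s)$ gives $\tau^{-1}(u) = s$, and since on the shifted path $r\comp\theta_{u}^{-1}$ (informally) corresponds to applying $\tau^{-1}$ to $\tau(t)$, the shift operator converts the second summand into $t$. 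Thus the whole expression collapses to $s + t$, which is exactly $\tau^{-1}(\tau(s+t))$, and injectivity of $\tau^{-1}$ yields~\eqref{eq4.2}.

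The hard part will be handling the composition with the shift operator $\theta$ cleanly, since the quantity $\tau(t)\comp\theta_{\tau(s)}$ is $\tau(t)$ evaluated along the shifted trajectory, and one must verify that the additivity relation for $L^w$ interacts correctly with this shift at the random (but a.s.\ finite) time $\tau(s)$. The rigorous version requires checking that $L^w_r\comp\theta_u$ with $r = \tau(t)\comp\theta_u$ is precisely the increment of $L^w$ over the time interval $[\,u, u+r\,]$, which in turn relies on the perfect additive functional property of $L^w$ established in subsection~\ref{ssect1.5}. Since all the identities used hold only almost surely, the conclusion~\eqref{eq4.2} is itself an almost sure statement, consistent with the phrasing of the lemma; the reference to~\cite[p.~160]{Kn81} indicates this is the standard transfer of additivity from an additive functional to its time-change inverse, so no genuinely new difficulty beyond careful bookkeeping of the shift is expected.
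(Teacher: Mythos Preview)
Your proposal is correct and follows the standard argument: applying the strictly increasing bijection $\tau^{-1}$ to both sides and using the pathwise additivity of $L^w$ (guaranteed by its PCHAF property) reduces~\eqref{eq4.2} to a straightforward identity. The paper itself does not give a proof of this lemma but merely cites~\cite[p.~160]{Kn81}; your argument is precisely the computation that the cited reference carries out, so there is nothing to compare.

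One minor point worth tightening in a final write-up: when you write $L^w_r\comp\theta_u$ with $r=\tau(t)\comp\theta_u$ random, the intended meaning is $L^w_{r(\go)}(\theta_{u(\go)}\go)$, and it is exactly the \emph{perfect} additivity of $L^w$ (i.e., the identity $L^w_{a+b}=L^w_a+L^w_b\comp\theta_a$ holding for all $a,b\ge0$ simultaneously on a single set of full measure) that lets you substitute random times here. You identify this issue yourself, so no gap remains.
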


It is easily checked that for every $t\ge 0$, $\tau(t)$ is an $\cFW$--stopping time,
and since $\tau$ is increasing, we obtain the subfiltration $\cFsW = (\cFsW_t,\,t\ge
0)$ of $\cFW$ defined by $\cFsW_t = \cFW_{\tau(t)}$, $t\in\R_+$. Moreover, we set
$\cFW_\infty = \gs(\cFW_t,\,t\in\R_+)$ and $\cFsW_\infty = \gs(\cFsW_t,\,t\in\R_+)$,
and find $\cFsW_\infty\subset \cFW_\infty$. Standard calculations show that the
completeness and the right continuity of $\cFW$ entail the same properties for
$\cF^s$. (For details of the argument in the case where $\cG=\R_+$ we refer the
interested reader to section~3 of~\cite{BMMG0}.)

Define a stochastic process $\Ws$ on $\cG$, called \emph{Walsh process with sticky
vertex}, by
\begin{equation}    \label{eq4.3}
    \Ws_t = W_{\tau(t)},\qquad t\in\R_+.
\end{equation}
Observe that when $W$ is away from the vertex, $L^w$ is constant, and therefore in
this case $\tau^{-1}$ grows with rate $1$. On the other hand, when $W$ is at the
vertex, $\tau^{-1}$ grows faster than with rate $1$, and therefore $\tau$ increases
slower than the deterministic time scale $t\mapsto t$. Thus $\Ws$ ``experiences a
slow down in time'' until $W$ has left the vertex. In this heuristic sense the
vertex is ``sticky'' for $\Ws$, because it spends more time there than $W$.

Note that because $L^w$ has continuous paths, $\tau^{-1}$ and therefore also $\tau$
are pathwise continuous. Consequently, $\Ws$ has continuous sample paths. Since $W$
has continuous paths, it is a measurable process, and hence for every $t\ge 0$,
$W_{\tau(t)}$ is $\cFW_{\tau(t)}$--measurable, that is, $\Ws$ is $\cFsW$--adapted.
Set $\theta^s_t = \theta_{\tau(t)}$. With the additivity~\eqref{eq4.2} of $\tau$ we
immediately find
\begin{equation} \label{sshift}
    W^s_s\comp\theta^s_t = W^s_{s+t},\qquad s,\,t\in\R_+.
\end{equation}
Thus $\theta^s=(\theta^s_t,\,t\in\R_+)$ is a family of shift operators for $\Ws$.

Next we show the strong Markov property of $\Ws$ relative to $\cFsW$ following the
argument briefly sketched in section~6.2 of~\cite{Kn81} for the case $\cG=\R_+$.
First we prove the simple Markov property of $\Ws$ with respect to $\cFsW$. To this
end, let $s$, $t\ge 0$, $\xi\in\cG$, and $C\in\cB(\cG)$. Then we get
with~\eqref{sshift}
\begin{align*}
    P_\xi\bigl(\Ws_{t+s}\in C\cond \cFsW_t\bigr)
        &= P_\xi\bigl(\Ws_s\comp\theta^s_t\in C\cond \cFsW_t\bigr)\\
        &= P_\xi\bigl(W_{\tau(s)}\comp\theta_{\tau(t)}\in C\cond \cFW_{\tau(t)}\bigr)\\
        &= P_{W_{\tau(t)}} \bigl(W_{\tau(s)}\in C\bigr)\\
        &= P_{\Ws_t}\bigl(\Ws_s\in C\bigr),
\end{align*}
where we used the strong Markov property of $W$ with respect to $\cFW$. As a next
step we prove that $\Ws$ has the strong Markov property for its hitting time $H^s_v$
of the vertex. By construction, $\Ws$ and $W$ have the same paths up to the hitting
time of the vertex, and in particular  $H^s_v$ is also the hitting time of the vertex
by $W$, that is, $H^s_v=H_v$. Moreover, since $L^w(H_v)=0$, we get that
$\tau^{-1}(H_v) = H_v = \tau(H_v)$, as well as $\theta^s(H_v) = \theta(H_v)$.
Assume now that $t\ge 0$, $\xi\in\cG$, and $C\in\cB(\cG)$. Then on $\{H_v<+\infty\}$
we can compute with the strong Markov property of $W$ as follows
\begin{align*}
    P_\xi\bigl(\Ws_{t+H_v}\in C \cond \cFW_{H_v}\bigr)
        &= P_\xi\bigl(\Ws_t\comp\theta^s_{H_v}\in C\cond \cFW_{H_v}\bigr)\\
        &= P_\xi\bigl(W_{\tau(t)}\comp\theta_{H_v}\in C\cond  \cFW_{H_v}\bigr)\\
        &= P_v\bigl(W_{\tau(t)}\in C\bigr)\\
        &= P_v\bigl(\Ws_t\in C\bigr).
\end{align*}
It is readily checked that $\cFsW_{H_v}\subset\cFW_{H_v}$, and therefore we get
in particular the strong Markov property of $W^s$ with respect to $H^s_v=H_v$ in the form
\begin{equation}    \label{sSMT_v}
    P_\xi\bigl(\Ws_{t+H^s_v}\in C \cond \cFsW_{H_v}\bigr)
        = P_v\bigl(\Ws_t\in C\bigr).
\end{equation}
Finally, with the strong Markov property of the standard one-dimensional Brownian
motions on every edge and the strong Markov property~\eqref{sSMT_v} just proved
we can apply the arguments similar to those in~\cite[Section 3.6]{ItMc74} to conclude
that $W^s$ is a Feller process. Hence it is strongly Markovian relative to the
filtration $\cF^s$.

By construction, $W^s$ is up to time $H^s_v$ equivalent to a standard
one-dimensional Brownian motion, and it has continuous sample paths. Hence,
altogether we have shown that $\Ws$ is a Brownian motion on $\cG$ in the sense of
definition~\ref{def_BM}.

Now we want to compute the generator of $\Ws$, and first we argue that $v$ is not a
trap for $\Ws$. To this end, we may consider $W$ as constructed from a standard
Brownian motion $B$ as described in section~\ref{sect2}. Let $Z$ denote the zero set
of $B$. Given $s\ge 0$ we can choose $t_0\ge s$ in the complement $Z^c$ of $Z$.
Consider $t = \tau^{-1}(t_0)$, i.e., $t = t_0 + \gg L^w_{t_0}$. Obviously $t\ge s$,
and $\tau(t)\in Z^c$. Therefore $B_{\tau(t)} \ne 0$, and consequently
$\Ws_t=W_{\tau(t)}\ne v$.

\begin{theorem} \label{thm4.2}
Consider the boundary condition~\eqref{eq1.1} with $a=0$, $c\in(0,1)$,
and $b\in [0,1]^n$. Set
\begin{equation}    \label{eq4.4}
    w_k = \frac{b_k}{1-c},\,\kn,\qquad \gg = \frac{c}{1-c},
\end{equation}
and let $\Ws$ be the sticky Walsh process as constructed above with these parameters.
Then the generator $A^s$ of $\Ws$ is $1/2$ times the second derivative on $\cG$ with domain
consisting of those $f\in\Cii$ which satisfy condition~\eqref{eq1.1b}.
\end{theorem}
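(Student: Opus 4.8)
The plan is to follow the strategy used for Theorem~\ref{thm3.2} and to compute the generator $A^s$ at the vertex through Dynkin's formula, the only genuinely new ingredient being the effect of the time change $\tau$ on the mean exit time from a small ball around $v$. Since it has already been shown that $\Ws$ is a Brownian motion on $\cG$ in the sense of Definition~\ref{def_BM}, Feller's theorem~\ref{thm_Feller} guarantees that $A^s f = \tfrac12 f''$ and that $\cD(A^s)$ consists exactly of those $f\in\Cii$ satisfying a single boundary relation of the form~\eqref{eq1.1}. The whole task is therefore to identify the constants in that relation and to match them with $a=0$, $c$, and the $b_k$ prescribed by~\eqref{eq4.4}.

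For $\gep>0$ let $\TsW_{v,\gep}$ denote the hitting time of $B_\gep(v)^c$ by $\Ws$. Because $\Ws$ and $W$ pass through the same points of $\cG$ until they leave $B_\gep(v)$ --- the time change $\tau$ is strictly increasing and does not alter the spatial trajectory --- the exit location of $\Ws$ has the same law under $P_v$ as that of $W$, so that, exactly as for the Walsh process,
\begin{equation*}
    E_v\Bigl(f\bigl(\Ws(\TsW_{v,\gep})\bigr)\Bigr) = \sum_{k=1}^n w_k\,f_k(\gep).
\end{equation*}
The new point is the denominator. Since $\Ws_t = W_{\tau(t)}$, the process $\Ws$ leaves $B_\gep(v)$ precisely when $\tau(t)$ reaches $\TW_{v,\gep}$, whence $\TsW_{v,\gep} = \tau^{-1}(\TW_{v,\gep}) = \TW_{v,\gep} + \gg\,\LW(\TW_{v,\gep})$ by the definition~\eqref{eq4.1} of $\tau^{-1}$. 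Taking expectations and using Lemma~\ref{lem2.1} for the first term,
\begin{equation*}
    E_v\bigl(\TsW_{v,\gep}\bigr) = \gep^2 + \gg\,E_v\bigl(\LW(\TW_{v,\gep})\bigr).
\end{equation*}
For the remaining expectation I would argue as in the proof of Lemma~\ref{lem3.1}: $\LW$ coincides pathwise with the local time $L^B$ of the underlying Brownian motion at the origin, and $\TW_{v,\gep}$ corresponds to $H^B_{\{-\gep,\gep\}}$, so Lemma~\ref{lem1.3} shows that $\LW(\TW_{v,\gep})$ is exponentially distributed with mean $\gep$. Hence $E_v(\TsW_{v,\gep}) = \gep^2 + \gg\gep$.

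Inserting both computations into Dynkin's formula and expanding $f_k(\gep)-f(v) = \gep\,f'(v_k) + \tfrac{\gep^2}{2} f''(v) + o(\gep^2)$ (using $\sum_k w_k=1$ and $f''(v_k)=f''(v)$ from Remark~\ref{remC02}), I obtain
\begin{equation*}
    A^s f(v) = \lim_{\gep\da 0}\frac{\sum_{k=1}^n w_k\bigl(f_k(\gep)-f(v)\bigr)}{\gep^2+\gg\gep}
             = \frac{1}{\gg}\sum_{k=1}^n w_k\,f'(v_k),
\end{equation*}
the $f''(v)$ contribution being of higher order and dropping out in the limit (here $\gg>0$ since $c\in(0,1)$). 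On the other hand $A^s f(v) = \tfrac12 f''(v)$ by Feller's theorem, so every $f\in\cD(A^s)$ satisfies $\tfrac{\gg}{2} f''(v) = \sum_k w_k f'(v_k)$. Substituting $w_k = b_k/(1-c)$ and $\gg = c/(1-c)$ turns this into $\tfrac{c}{2} f''(v) = \sum_k b_k f'(v_k)$, i.e.\ precisely~\eqref{eq1.1b} with $a=0$. To conclude that $\cD(A^s)$ is \emph{exactly} this set I would invoke Feller's theorem once more: it forces $\cD(A^s)$ to be cut out by a single relation of the form~\eqref{eq1.1}; the relation just derived carries no $f(v)$ term, so $a=0$, and the normalization $a+c+\sum_k b_k=1$ fixes the overall scale, yielding $c$ and the $b_k$ as stated. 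As in Remark~\ref{rem3.3a}, one checks that $c\in(0,1)$ makes $w_k\in[0,1]$ with $\sum_k w_k=1$ and $\gg>0$, so the construction is consistent.

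The main obstacle is the evaluation $E_v(\TsW_{v,\gep}) = \gep^2 + \gg\gep$: this is where the sticky time change genuinely enters, and it rests on correctly reading off $\TsW_{v,\gep} = \tau^{-1}(\TW_{v,\gep})$ from $\Ws_t = W_{\tau(t)}$, together with the exponential law of $\LW(\TW_{v,\gep})$ furnished by Lemma~\ref{lem1.3}. An alternative route, parallel to Lemmas~\ref{lem3.3} and~\ref{lem3.4}, would compute the resolvent $R^s$ directly and read the boundary condition (and the scattering matrix) off it; this would give a second, independent proof, but would require the Laplace transform of the sticky clock rather than merely its mean.
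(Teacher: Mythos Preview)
Your proposal is correct and follows essentially the same route as the paper: Dynkin's formula at $v$, with the numerator unchanged from the Walsh case and the denominator computed via the identity $\TsW_{v,\gep} = \tau^{-1}(\TW_{v,\gep}) = \TW_{v,\gep} + \gg\,L^w(\TW_{v,\gep})$ together with Lemma~\ref{lem1.3}. The paper isolates this identity as a separate lemma (Lemma~\ref{lem4.3}) and proves it by a careful two-sided inequality argument rather than your one-line appeal to the strict monotonicity and continuity of $\tau$, but the substance is the same; your Taylor-expansion presentation of the limit and your explicit invocation of Feller's theorem to pin down the full domain are slightly more detailed than the paper's treatment, which simply records~\eqref{eq4.7} as a necessary condition.
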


Before we prove theorem~\ref{thm4.2} we first prepare two preliminary results. Let
$\gep>0$, and let $\TsW_{v,\gep}$ denote the hitting time of the complement of the
open ball $B_\gep(v)$ with radius $\gep$ and center $v$ by $\Ws$. Recall that
$\TW_{v,\gep}$ denotes the corresponding first hitting time for the Walsh process
$W$.

\begin{lemma}   \label{lem4.3}
$P_v$--a.s., the formula
\begin{equation}    \label{eq4.5}
    \TsW_{v,\gep} = \TW_{v,\gep} + \gg\,L^w_{\TW_{v,\gep}}
\end{equation}
holds true.
\end{lemma}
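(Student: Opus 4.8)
The plan is to exploit the fact that $\Ws$ is nothing but the Walsh process $W$ read on the time scale $\tau$: the two processes trace out exactly the same spatial trajectory, so the hitting time by $\Ws$ of a \emph{fixed} subset of $\cG$ is obtained from the corresponding hitting time by $W$ simply by applying the time change. Concretely, I expect to establish the identity $\TsW_{v,\gep} = \tau^{-1}(\TW_{v,\gep})$ and then read off \eqref{eq4.5} directly from the explicit formula \eqref{eq4.1} for $\tau^{-1}$.

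First I would record that the target set $B_\gep(v)^c = \{\xi\in\cG : d(\xi,v)\ge\gep\}$ is closed, and that under $P_v$ the process starts at the vertex, hence inside the open ball $B_\gep(v)$. By the $P_v$--a.s.\ continuity of the paths of $W$ it then follows that the infimum defining $\TW_{v,\gep}$ is actually attained, with $d\bigl(W_{\TW_{v,\gep}},v\bigr)=\gep$, so that $W_{\TW_{v,\gep}}\in B_\gep(v)^c$.

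The main step is the transformation of the hitting time under the time change. Since $\Ws_t = W_{\tau(t)}$ with $\tau$ a strictly increasing, continuous bijection of $\R_+$ onto itself (as established in the paragraph containing \eqref{eq4.1}), one has $\Ws_t\in B_\gep(v)^c$ if and only if $\tau(t)\in\{s\ge 0 : W_s\in B_\gep(v)^c\}$. Writing $t^\ast = \tau^{-1}(\TW_{v,\gep})$, strict monotonicity of $\tau$ gives $\tau(t)<\TW_{v,\gep}$ for $t<t^\ast$, whence $W_{\tau(t)}$ still lies in the open ball, while $\tau(t^\ast)=\TW_{v,\gep}$ yields $W_{\tau(t^\ast)}\in B_\gep(v)^c$ by the previous paragraph. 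Hence $\TsW_{v,\gep}=t^\ast=\tau^{-1}(\TW_{v,\gep})$, and substituting the explicit expression $\tau^{-1}(s)=s+\gg L^w_s$ from \eqref{eq4.1} at $s=\TW_{v,\gep}$ produces \eqref{eq4.5}.

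I expect the only genuine subtlety to be the bookkeeping around whether the infimum defining $\TW_{v,\gep}$ is attained and lands in the closed complement: this is precisely what path continuity together with the start inside the open ball secures, and it is what legitimizes the exact identity $\TsW_{v,\gep}=\tau^{-1}(\TW_{v,\gep})$ rather than a mere inequality. Everything else is a direct consequence of $\tau$ being an increasing homeomorphism of $\R_+$, under which infima of spatial hitting sets are preserved.
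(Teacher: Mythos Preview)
Your proof is correct and follows essentially the same route as the paper: both establish the identity $\TsW_{v,\gep}=\tau^{-1}(\TW_{v,\gep})$ using path continuity (so that $d(v,W_{\TW_{v,\gep}})=\gep$) and strict monotonicity of $\tau$, then read off \eqref{eq4.5} from the explicit formula~\eqref{eq4.1}. The paper organizes the argument as two separate inequalities $\TsW_{v,\gep}\le\gs$ and $\TsW_{v,\gep}\ge\gs$ with $\gs=\tau^{-1}(\TW_{v,\gep})$, whereas you compress this into a single observation via the bijection property, but the content is the same.
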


\begin{proof}
Let $W$, and therefore also $\Ws$, start in the vertex $v$. Since $\Ws$ and $W$
have continuous paths with infinite lifetime we have for all $\gg\ge 0$
\begin{equation*}
    \TsW_{v,\gep} = \inf\bigl\{t>0,\, d(v,W_{\tau(t)})=\gep\bigr\},
\end{equation*}
and in particular for $\gg=0$,
\begin{equation*}
    \TW_{v,\gep} = \inf\bigl\{t>0,\, d(v,W_t)=\gep\bigr\}.
\end{equation*}
Moreover, as argued above, both infima are a.s.\ finite. Set
\begin{equation*}
    \gs = \TW_{v,\gep} + \gg\,L^w_{\TW_{v,\gep}}.
\end{equation*}
Then $\tau(\gs) = \TW_{v,\gep}$, and therefore
\begin{align*}
    d\bigl(v, \Ws_\gs\bigr)
        &= d\bigl(v, W_{\tau(\gs)}\bigr)\\
        &= d\bigl(v, W_{\TW_{v,\gep}}\bigr)\\
        &= \gep.
\end{align*}
Consequently we get $\TsW_{v,\gep} \le \gs$. To derive the converse inequality
we remark that
\begin{align*}
    \gep
        &= d\bigl(v, \Ws_{\TsW_{v,\gep}}\bigr)\\
        &= d\bigl(v, W_{\tau(\TsW_{v,\gep})}\bigr),
\end{align*}
which implies
\begin{equation*}
    \tau\bigl(\TsW_{v,\gep}\bigr)\ge \TW_{v,\gep}.
\end{equation*}
Since $\tau$ is strictly increasing this entails
\begin{equation*}
    \TsW_{v,\gep} \ge \tau^{-1}\bigl(\TW_{v,\gep}\bigr) = \gs,
\end{equation*}
and the proof is finished.
\end{proof}

\begin{corollary}   \label{cor4.4}
For every $\gg\ge 0$,
\begin{equation}    \label{eq4.6}
    E_v\bigl(\TsW_{v,\gep}\bigr)=\gep^2+\gg\gep
\end{equation}
holds.
\end{corollary}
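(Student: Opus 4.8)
The plan is to take expectations in the almost sure identity provided by lemma~\ref{lem4.3}. Under $P_v$ we have
\[
    \TsW_{v,\gep} = \TW_{v,\gep} + \gg\,L^w_{\TW_{v,\gep}},
\]
so by linearity of the expectation it suffices to compute $E_v(\TW_{v,\gep})$ and $E_v\bigl(L^w_{\TW_{v,\gep}}\bigr)$ separately and then add.

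The first term is immediate: lemma~\ref{lem2.1} gives $E_v(\TW_{v,\gep}) = \gep^2$. For the second term I would invoke the pathwise construction of the Walsh process $W$ from a standard Brownian motion $B$ (as set up in section~\ref{sect2} and reused in the proof of lemma~\ref{lem3.1}). Under that construction $L^w$ coincides pathwise with the local time $L^B$ of $B$ at the origin, and $\TW_{v,\gep}$ has, under $P_v$, the same law as $H^B_{\{-\gep,+\gep\}}$ under $Q_0$; consequently $L^w_{\TW_{v,\gep}}$ is distributed as $L^B\bigl(H^B_{\{-\gep,+\gep\}}\bigr)$. By lemma~\ref{lem1.3} this random variable is exponentially distributed with mean $\gep$, whence $E_v\bigl(L^w_{\TW_{v,\gep}}\bigr) = \gep$.

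Substituting these two values gives $E_v(\TsW_{v,\gep}) = \gep^2 + \gg\gep$, as claimed. No serious obstacle arises here: both summands in the identity of lemma~\ref{lem4.3} are integrable --- the first has finite expectation by lemma~\ref{lem2.1}, the second is exponentially distributed and hence integrable --- so taking $E_v$ term by term is legitimate. The only point requiring a little care is the bookkeeping in the identification of $L^w_{\TW_{v,\gep}}$ with $L^B\bigl(H^B_{\{-\gep,+\gep\}}\bigr)$, but this is precisely the correspondence already established in the proof of lemma~\ref{lem3.1}, so it can be quoted rather than redone.
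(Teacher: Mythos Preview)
Your proof is correct and follows essentially the same route as the paper: both take expectations in the identity of lemma~\ref{lem4.3}, use lemma~\ref{lem2.1} for the first summand, and identify $L^w_{\TW_{v,\gep}}$ with $L^B\bigl(H^B_{\{-\gep,+\gep\}}\bigr)$ via the pathwise construction to invoke lemma~\ref{lem1.3} for the second. Your added remark on integrability is a harmless extra justification.
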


\begin{proof}
By construction, the paths of $W$ starting in $v$ hit the complement of $B_\gep(v)$
exactly when the underlying standard Brownian motion $B$ (cf.\ section~\ref{sect2})
starting at the origin hits one of the points $\pm\gep$ on the real line. Thus under $P_v$,
$L^w(\TW_{v,\gep})$ has the same law as $L^B(H^B_{\{-\gep,\gep\}})$ under $P_0$.
Lemma~\ref{lem1.3} states that under $P_0$ this random variable
is exponentially distributed with mean $\gep$. Then equation~\eqref{eq4.6} follows
directly from lemmas~\ref{lem4.3}, and \ref{lem2.1}.
\end{proof}

Given these results, we  come to the

\begin{proof}[Proof of theorem~\ref{thm4.2}]
Let $w_k$, $\kn$, and $\gg$ be defined as in~\eqref{eq4.4}, and note that due to the
condition~\eqref{eq1.1a} on $b_k$, $\kn$, and $c$, we have $w_k\in [0,1]$, $\kn$,
$\sum_k w_k =1$, as well as $\gg>0$. Hence we can construct the associated sticky
Walsh process $\Ws$ as above.

Let $A^s$ denote the generator of $\Ws$ with domain $\cD(A^s)$. Then we
have for $f\in\cD(A^s)$, $A^s f(v) = 1/2 f''(v)$ (cf.\ theorem~\ref{thm_Feller}). On the
other hand, we can compute $A^s f(v)$ via Dynkin's formula as follows
\begin{align*}
    A^s f(v)
        &= \lim_{\gep\da 0} \frac{E_v\Bigl(f\bigl(\Ws(\TsW_{v,\gep})\bigr)\Bigr)-f(v)}
                                 {E_v\bigl(\TsW_{v,\gep}\bigr)}\\[1ex]
        &= \lim_{\gep\da 0} \frac{\sum_k w_k f_k(\gep) - f(v)}{\gep^2 + \gg\gep},
\end{align*}
where we used corollary~\ref{cor4.4}. Since the directional derivatives of $f$ at $v$
\begin{equation*}
    f'(v_k)=\lim_{\xi\to v,\,\xi\in l_k}    \frac{f(\xi)-f(v)}{d(\xi,v)},\qquad k\in\sn,
\end{equation*}
exist (cf.\ lemma~\ref{lemC02}), we obviously get the boundary condition
\begin{equation}    \label{eq4.7}
    \frac{1}{2}\,f''(v) = \frac{1}{\gg}\,\sum_{k=1}^n w_k f'(v_k)
\end{equation}
as a necessary condition. Finally, inserting of the values~\eqref{eq4.4} of the
parameters $w_k$, $\kn$, and $\gg$ into equation~\eqref{eq4.7} we complete the proof
of theorem~\ref{thm4.2}.
\end{proof}

Next we shall compute the resolvent $\RsW$ of the Walsh process with sticky vertex.
Similarly to the alternative proof of theorem~\ref{thm3.2} for the elastic Walsh
process, as a byproduct we obtain an alternative proof of theorem~\ref{thm4.2}. We
begin with the following

\begin{lemma}   \label{lem4.5}
Let $\gl>0$, $f\in \Co$. Then
\begin{equation}    \label{eq4.8}
    \frac{1}{2} \bigl(\RsW_\gl f\bigr)''(v)
        = \frac{1}{\sgl +\gg\gl}\,\Bigl(2\gl\,(e^w_\gl,f) - \sgl f(v)\Bigr)
\end{equation}
holds, where
\begin{equation}    \label{eq4.9}
    e^w_\gl(\xi) = w_k\,e_\gl(\xi),\qquad \xi\in l_k,\,\kn,
\end{equation}
and $e_\gl$ is defined in equation~\eqref{eq1.14}.
\end{lemma}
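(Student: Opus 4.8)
The plan is to reduce \eqref{eq4.8} to the computation of the single value $\RsW_\gl f(v)$, and to obtain that value by undoing the time change~\eqref{eq4.1}. Since $\RsW_\gl f$ lies in the domain of the generator $A^s$ of $\Ws$, on which $A^s$ acts as one half of the second derivative (Theorem~\ref{thm_Feller}), the resolvent relation $\gl\RsW_\gl = A^s\RsW_\gl + \text{id}$ evaluated at the vertex gives
\begin{equation*}
    \frac{1}{2}\bigl(\RsW_\gl f\bigr)''(v) = \gl\,\RsW_\gl f(v) - f(v).
\end{equation*}
Consequently it suffices to establish that $\RsW_\gl f(v) = \bigl(2(e^w_\gl,f)+\gg f(v)\bigr)/(\sgl+\gg\gl)$; inserting this into the preceding display and simplifying yields precisely~\eqref{eq4.8}.

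To compute $\RsW_\gl f(v)$, I would start from $\RsW_\gl f(v)=E_v\bigl(\int_0^\infty e^{-\gl t}f(W_{\tau(t)})\,dt\bigr)$ and substitute $u=\tau(t)$, so that $t=\tau^{-1}(u)=u+\gg L^w_u$ and the Lebesgue measure $dt$ pulls back to the Lebesgue--Stieltjes measure $d\tau^{-1}(u)=du+\gg\,dL^w_u$. This splits the resolvent value as
\begin{equation*}
\begin{split}
    \RsW_\gl f(v)
        &= E_v\Bigl(\int_0^\infty e^{-\gl(u+\gg L^w_u)}\,f(W_u)\,du\Bigr)\\
        &\quad+\gg\,E_v\Bigl(\int_0^\infty e^{-\gl(u+\gg L^w_u)}\,f(W_u)\,dL^w_u\Bigr).
\end{split}
\end{equation*}

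In the first term the factor $e^{-\gl\gg L^w_u}$ is exactly the conditional probability, given $W$, that an independent exponential variable of rate $\gb=\gl\gg$ exceeds $L^w_u$; by the construction of the elastic Walsh process in Section~\ref{sect3} this identifies the first term with $\ReW_\gl f(v)$ at killing rate $\gb=\gl\gg$, which by~\eqref{eq3.5} (using $e_\gl(v)=1$) equals $\frac{\sgl}{\gg\gl+\sgl}\,\RW_\gl f(v)$, and~\eqref{eq2.3} gives $\RW_\gl f(v)=\frac{2}{\sgl}(e^w_\gl,f)$. In the second term $L^w$ increases only while $W=v$, so $f(W_u)=f(v)$ on the support of $dL^w_u$; passing to the local-time scale through the pseudo-inverse $K^w$ of $L^w$ and using $E_v\bigl(e^{-\gl K^w_r}\bigr)=e^{-\sgl r}$ (Lemma~\ref{lem1.2}) turns it into $\gg f(v)\int_0^\infty e^{-(\gg\gl+\sgl)r}\,dr=\gg f(v)/(\gg\gl+\sgl)$. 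Adding the two contributions produces the claimed value of $\RsW_\gl f(v)$.

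The main obstacle is the rigorous justification of the time substitution inside the expectation: one must verify the change-of-variables formula $\int_0^\infty g(t)\,dt=\int_0^\infty g(\tau^{-1}(u))\,d\tau^{-1}(u)$ for the continuous strictly increasing bijection $\tau^{-1}$ with $\tau^{-1}(0)=0$, the resulting $P_v$--a.s.\ decomposition of $d\tau^{-1}(u)$ into its absolutely continuous part $du$ and the part carried by the singular measure $dL^w_u$, and the applicability of Tonelli's theorem to interchange $E_v$ with both integrals. Once these measure-theoretic points are settled, the identification of the first piece with the already computed elastic resolvent~\eqref{eq3.5} and the reduction of the second piece to the Laplace transform of $K^w_r$ are routine.
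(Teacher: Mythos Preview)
Your argument is correct, and it shares with the paper's proof both the opening move (the resolvent identity $\frac{1}{2}(\RsW_\gl f)''(v)=\gl\RsW_\gl f(v)-f(v)$) and the time change $t=u+\gg L^w_u$. The difference lies in how the resulting integral is evaluated. The paper subtracts $f(v)$ \emph{before} changing variables, so that the integrand $f(W_s)-f(v)$ vanishes on the support of $dL^w_s$ and only the $ds$--part survives; that remaining expectation is then computed directly from the explicit joint density of $(|B_s|,L^B_s)$ under $Q_0$ (Lemma~\ref{lem1.1}). You instead keep $f(W_u)$ and split the Stieltjes measure into its two pieces: the $du$--part you recognise as the elastic Walsh resolvent $\ReW_\gl f(v)$ at killing rate $\gb=\gl\gg$, already computed in~\eqref{eq3.5} and~\eqref{eq2.3}, while the $dL^w_u$--part reduces, after the substitution $r=L^w_u$, to the Laplace transform~\eqref{eq1.13} of $K^w_r$. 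Your route thus bypasses Lemma~\ref{lem1.1} entirely by recycling the Section~\ref{sect3} computation, which is an economical alternative; the paper's route is more self-contained but needs the joint law. The measure-theoretic justification you flag (the change of variables for the continuous increasing bijection $\tau^{-1}$ and the identity $\int_0^\infty \phi(u)\,dL^w_u=\int_0^\infty \phi(K^w_r)\,dr$) is standard and is used implicitly in the paper's proof as well.
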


\begin{proof}
Let $A^s$ be the generator of $\Ws$ on $\Co$. From the identity $A^s \RsW_\gl =
\gl\,\RsW_\gl -\text{id}$, and the definition of $\tau$ we get
\begin{align*}
    \frac{1}{2}\,\bigl(\RsW_\gl f\bigr)''(v)
        &= \gl E_v\Bigl(\int_0^\infty e^{-\gl t}\bigl(f(\Ws_t)-f(v)\bigr)\,dt\Bigr)\\
        &= \gl E_v\Bigl(\int_0^\infty e^{-\gl(s+\gg L^w_s)}
            \bigl(f(W_s)-f(v)\bigr)\,(ds + \gg dL^w_s)\Bigr)\\
        &= \gl E_v\Bigl(\int_0^\infty e^{-\gl(s+\gg L^w_s)}
            \bigl(f(W_s)-f(v)\bigr)\,ds \Bigr).
\end{align*}
In the last equality we used the fact that $L^w$ only grows when $W$ is at the vertex~$v$.
By construction of the Walsh process $W$ we have
\begin{align*}
    E_v\Bigl(&e^{-\gl\gg L^w_s}\bigl(f(W_s)-f(v)\bigr)\Bigr)\\
        &= \sum_{k=1}^n w_k\,E_0\Bigl(e^{-\gl\gg L^B_s}
                \bigl(f_k(|B_s|) - f_k(0)\bigr) \Bigr)\\
        &= 2\sum_{k=1}^n w_k \int_0^\infty \int_0^\infty e^{-\gl\gg y}
                \bigl(f_k(x) - f_k(0)\bigr)\,\frac{x+y}{\sqrt{2\pi s^3}}\,
                    e^{-(x+y)^2/2s}\,dx\,dy,
\end{align*}
where we used lemma~\ref{lem1.1}. We insert the last expression above, and
use formula~\eqref{LT}. This gives
\begin{align*}
    \frac{1}{2}\,\bigl(\RsW_\gl f\bigr)''(v)
        &= 2\gl \sum_{k=1}^n w_k\,\frac{1}{\sgl + \gg\gl}\,
            \int_0^\infty e^{-\sgl x}\bigl(f_k(x)-f_k(0)\bigr)\,dx\\
        &= \frac{1}{\sgl+\gg\gl}\,\bigl(2\gl\,(e^w_\gl,f) - \sgl f(v)\bigr). \qedhere
\end{align*}
\end{proof}

From the identity $A^s \RsW_\gl = \gl\,\RsW_\gl -\text{id}$ and some simple
algebra we get the

\begin{corollary}   \label{cor4.6}
Let $\gl>0$, and $f\in\Co$. Then
\begin{equation}    \label{eq4.11}
    \RsW_\gl f(v) = \frac{1}{\sgl+\gg\gl}\,\bigl(2\,(e^w_\gl,f)+\gg f(v)\bigr)
\end{equation}
holds.
\end{corollary}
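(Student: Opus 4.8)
The statement I need to prove, Corollary~\ref{cor4.6}, asserts that
\begin{equation*}
    \RsW_\gl f(v) = \frac{1}{\sgl+\gg\gl}\,\bigl(2\,(e^w_\gl,f)+\gg f(v)\bigr).
\end{equation*}
The announcement is explicit: it should follow from the resolvent identity $A^s\RsW_\gl = \gl\RsW_\gl - \text{id}$ ``and some simple algebra''. The key realization is that Theorem~\ref{thm4.2} already identifies $A^s$ as $\tfrac12 f''$ on the domain, and crucially that $\RsW_\gl f$ lies in $\cD(A^s)$ for every $f\in\Co$, so Lemma~\ref{lem4.5} supplies an explicit formula for $\tfrac12(\RsW_\gl f)''(v)$. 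So the whole problem reduces to combining these two facts at the single point $v$.

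\textbf{The main computation.} First I would evaluate the resolvent identity at $\xi=v$. Since $\RsW_\gl f\in\cD(A^s)$ we have $A^s\RsW_\gl f(v)=\tfrac12(\RsW_\gl f)''(v)$, so the identity $A^s\RsW_\gl f=\gl\RsW_\gl f-f$ gives, at $v$,
\begin{equation*}
    \frac{1}{2}\bigl(\RsW_\gl f\bigr)''(v) = \gl\,\RsW_\gl f(v) - f(v).
\end{equation*}
Now I substitute the explicit value of the left-hand side from Lemma~\ref{lem4.5}, namely $\tfrac{1}{\sgl+\gg\gl}\bigl(2\gl(e^w_\gl,f)-\sgl f(v)\bigr)$, to obtain a single scalar equation whose only unknown is $\RsW_\gl f(v)$. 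This reads
\begin{equation*}
    \gl\,\RsW_\gl f(v) - f(v) = \frac{1}{\sgl+\gg\gl}\,\bigl(2\gl\,(e^w_\gl,f)-\sgl f(v)\bigr).
\end{equation*}

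\textbf{Solving for the value.} The remaining step is purely algebraic: solve for $\RsW_\gl f(v)$. Moving $f(v)$ to the right and dividing by $\gl$ gives
\begin{equation*}
    \RsW_\gl f(v) = \frac{1}{\gl}\,f(v) + \frac{1}{\gl(\sgl+\gg\gl)}\,\bigl(2\gl\,(e^w_\gl,f)-\sgl f(v)\bigr).
\end{equation*}
I then put everything over the common denominator $\gl(\sgl+\gg\gl)$; the numerator becomes $(\sgl+\gg\gl)f(v) + 2\gl(e^w_\gl,f) - \sgl\,f(v) = \gg\gl\,f(v) + 2\gl(e^w_\gl,f)$, and the factor $\gl$ cancels, yielding exactly $\tfrac{1}{\sgl+\gg\gl}\bigl(2(e^w_\gl,f)+\gg f(v)\bigr)$, as claimed.

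\textbf{Obstacles.} I do not anticipate a genuine obstacle: the derivation is a one-line algebraic consequence once Lemma~\ref{lem4.5} is in hand, which is precisely why the authors label it a corollary. The only point requiring a moment's care is the justification that $\RsW_\gl f\in\cD(A^s)$, so that the generator may legitimately be replaced by $\tfrac12(\cdot)''$ at $v$; this is exactly the standard fact that a resolvent maps the Banach space $\Co$ \emph{onto} the domain of the generator, which was already invoked in the parallel arguments for the elastic Walsh process in Section~\ref{sect3} and holds here for the Feller process $\Ws$. With that established, the computation is the bookkeeping of cancelling the factor $\gl$ and collecting the $f(v)$ terms.
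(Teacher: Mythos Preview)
Your proposal is correct and follows exactly the approach the paper indicates: evaluate the resolvent identity $A^s\RsW_\gl=\gl\RsW_\gl-\text{id}$ at $v$, use that $A^s$ acts as $\tfrac12\,(\cdot)''$ on its domain (so the left side equals the expression computed in Lemma~\ref{lem4.5}), and solve the resulting scalar equation for $\RsW_\gl f(v)$. The algebra you display is precisely the ``simple algebra'' the paper alludes to.
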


Since formula~\eqref{eq1.24} in corollary~\ref{cor1.5} is valid for the resolvent of
every Brownian motion on $\cG$, we may use that formula for $\RsW_\gl f$, sum it
against the weights $w_k$, $\kn$, and insert the right hand side of
equation~\eqref{eq4.11}. This results in
\begin{equation*}
    \sum_{k=1}^n w_k \bigl(\RsW_\gl f\bigr)'(v_k)
        = \gg\,\frac{1}{\sgl+\gg\gl}\,\bigl(2\gl\,(e^w_\gl,f)-\sgl f(v)\bigr),
\end{equation*}
and a comparison with formula~\eqref{eq4.8} shows that equation~\eqref{eq4.7} holds
true for $f$ replaced by $\RsW_\gl f$ for arbitrary $f\in\Co$. As promised we thus
have another proof of theorem~\ref{thm4.2}.

With the help of the first passage time formula we can now provide explicit
expressions for the resolvent $\RsW$, its kernel $\rsW$ and the transition
kernel $\psW$ of $\Ws$. Inserting the right hand side of equation~\eqref{eq4.11}
into the first passage time formula~\eqref{eq1.23}, we immediately obtain for
$f\in\Co$, $\gl>0$,
\begin{equation}    \label{eq4.12}
    \RsW_\gl f(\xi)
        = R^D_\gl f(\xi) + \frac{1}{\sgl +\gg\gl}\,e_\gl(\xi)
            \bigl(2\,(e^w_\gl,f) + \gg f(v)\bigr),\quad \xi\in\cG,
\end{equation}
where $R^D$ is the Dirichlet resolvent~\eqref{eq1.21}. Using formula~\eqref{eq1.22}
for the kernel of $R^D$ together with~\eqref{eq1.23}, and \eqref{eq1.24}, we get the
following result.

\begin{corollary}   \label{cor4.7}
For $\xi$, $\eta\in\cG$, $\gl>0$, the resolvent kernel $\rsW_\gl$,  of the Walsh process
with sticky vertex is given  by
\begin{equation}    \label{eq4.13}
\begin{split}
    \rsW_\gl(\xi,d\eta)
        = r^D_\gl(\xi,\eta)\,d\eta
            +\sum_{k,m=1}^n e_{\gl,k}(\xi)\,\,&2w_m\,\frac{1}{\sgl+\gg\gl}\,e_{\gl,m}(\eta)\,d\eta\\
                &\quad + \frac{\gg}{\sgl +\gg\gl}\,e_\gl(\xi)\,\epsilon_v(d\eta),
\end{split}
\end{equation}
with $r^D_\gl$ defined in~\eqref{eq1.22}, and $\epsilon_v$ denotes the Dirac measure
in $v$. Alternatively, $\rsW_\gl$ is given by
\begin{subequations}    \label{eq4.14}
\begin{equation}    \label{eq4.14a}
\begin{split}
    \rsW_\gl(\xi,d\eta)
        = r_\gl(\xi,\eta)\,d\eta
            +\sum_{k,m=1}^n e_{\gl,k}(\xi)\,&\SsW_{km}(\gl)\,\frac{1}{\sgl}\,e_{\gl,m}(\eta)\,d\eta\\
                &\quad + \frac{\gg}{\sgl +\gg\gl}\,e_\gl(\xi)\,\epsilon_v(d\eta),
\end{split}
\end{equation}
where $r_\gl$ is defined in equation~\eqref{eq1.22a}, and
\begin{equation} \label{eq4.14b}
    \SsW_{km}(\gl)
        = 2\,\frac{\sgl}{\sgl+\gg\gl}\,w_m - \gd_{km}.
\end{equation}
\end{subequations}
\end{corollary}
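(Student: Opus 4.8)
The plan is to read both kernel representations directly off the resolvent identity \eqref{eq4.12}, which already encodes the first passage time formula \eqref{eq1.23} with $\RsW_\gl f(v)$ supplied by corollary~\ref{cor4.6}; no further probabilistic input is required. What remains is purely algebraic: rewrite each of the three terms on the right of \eqref{eq4.12} as an integral of $f$ against a measure in the variable $\eta$, thereby obtaining \eqref{eq4.13}, and then convert the Dirichlet piece into the scattering--matrix form \eqref{eq4.14} by means of \eqref{eq1.22}.

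First I would unfold the pairing $(e^w_\gl,f)$. By the definition \eqref{eq4.9} of $e^w_\gl$ and the convention $(g,h)=\sum_k\int_0^\infty g_k h_k$, one has $(e^w_\gl,f)=\int_\cG\bigl(\sum_{m=1}^n w_m\,e_{\gl,m}(\eta)\bigr)f(\eta)\,d\eta$, where $e_{\gl,m}=\onel[m]\,e_\gl$ denotes the restriction of $e_\gl$ to $l_m$. Writing likewise $e_\gl(\xi)=\sum_{k=1}^n e_{\gl,k}(\xi)$, the middle term of \eqref{eq4.12} turns into $\int_\cG\bigl(\sum_{k,m}e_{\gl,k}(\xi)\,\tfrac{2w_m}{\sgl+\gg\gl}\,e_{\gl,m}(\eta)\bigr)f(\eta)\,d\eta$. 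The last term $\tfrac{\gg}{\sgl+\gg\gl}\,e_\gl(\xi)\,f(v)$ is exactly $\int_\cG\tfrac{\gg}{\sgl+\gg\gl}\,e_\gl(\xi)\,f(\eta)\,\epsilon_v(d\eta)$ since $\int f\,d\epsilon_v=f(v)$, while the first term contributes the density $r^D_\gl(\xi,\eta)\,d\eta$. Collecting the three contributions gives \eqref{eq4.13}.

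For the alternative representation I would substitute $r^D_\gl=r_\gl-r_{v,\gl}$ from \eqref{eq1.22} into \eqref{eq4.13}. Using \eqref{eq1.22b} in the form $r_{v,\gl}(\xi,\eta)=\sum_{k,m}e_{\gl,k}(\xi)\,\tfrac{\gd_{km}}{\sgl}\,e_{\gl,m}(\eta)$ --- the Kronecker delta forcing both legs onto the same edge --- the subtracted term $-r_{v,\gl}$ merges with the double sum already present to yield $\sum_{k,m}e_{\gl,k}(\xi)\bigl(\tfrac{2w_m}{\sgl+\gg\gl}-\tfrac{\gd_{km}}{\sgl}\bigr)e_{\gl,m}(\eta)$. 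Pulling out a factor $1/\sgl$ identifies the bracket with $\SsW_{km}(\gl)/\sgl$, where $\SsW_{km}(\gl)=\tfrac{2\sgl}{\sgl+\gg\gl}\,w_m-\gd_{km}$, which is \eqref{eq4.14b}; the Dirac term is untouched by this Dirichlet--to--free replacement, so \eqref{eq4.14a} follows.

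Since the computation is essentially bookkeeping, the only step demanding care --- and the one I would flag as the main, if mild, obstacle --- is the consistent use of the restriction/indicator convention $e_{\gl,k}=\onel\,e_\gl$: one must track which factor is supported on which edge so that the single sum defining $r_{v,\gl}$ is legitimately promoted to a double sum with the $\gd_{km}$ in the right place. Equally, the absolutely continuous part (carried by $d\eta$) and the singular part (carried by $\epsilon_v(d\eta)$) must be kept separate throughout, since the resolvent kernel of the sticky process is genuinely a measure and not a density.
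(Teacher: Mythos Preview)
Your proposal is correct and follows essentially the same route as the paper: the paper derives \eqref{eq4.12} from the first passage time formula and corollary~\ref{cor4.6}, and then states that the kernel expressions follow from the formula~\eqref{eq1.22} for the Dirichlet kernel, which is exactly the bookkeeping you spell out. Your explicit handling of the indicator convention and the promotion of the single sum in $r_{v,\gl}$ to a double sum with $\gd_{km}$ is the right way to make the passage from \eqref{eq4.13} to \eqref{eq4.14} rigorous.
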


\begin{remark}
When all $w_m$, $m=1$, \dots, $n$, are equal to $1/n$, the matrix $S^s(\gl)$ takes the
form
\begin{equation*}
	S^s(\gl) = -\1 + \frac{2\sgl}{\sgl + \gg \gl}\,P_n
\end{equation*}
which reduces to~\eqref{eq2.6a} when $\gg=0$. $S^s(\gl)$ is unitary for all $\gl<0$.
Also the $S^s(\gl)$ for different $\gl$ all commute. As a consequence $S^s(\gl)$
has the interpretation of a quantum scattering matrix in the sense of~\cite{KoSc99}.
More precisely, $S^s(\gl)$ stems from the Schr\"odinger operator $-\gD^s$, where
$\gD^s$ is a self-adjoint Laplace operator on $L^2(\cG)$ with boundary conditions
of the form~\eqref{bc} with the choice
\begin{equation}		\label{AB}
\begin{split}
	A &= - \frac{1}{2}\,\bigl(S^s(\gl_0)+\1\bigr),\\
	B &= - \frac{1}{2\sqrt{\mathstrut2\gl_0}}\,\bigl(S^s(\gl_0)-\1\bigr),
\end{split}
\end{equation}
for any $\gl_0$ for which $\sqrt{2\gl_0}+\gg\gl_0\ne 0$. We emphasize that
the Schr\"odinger operator $-\gD^s$ and the generator $A^s$ of the Walsh process
are quite different: Not only do they act on different Banach spaces,
but also the functions in the intersection of their domains satisfy different
boundary conditions at the vertex $v$. As matter of fact, the integral kernel
of the resolvent  $(-\gD^s+2\gl)^{-1}$ of the Schr\"odinger operator $-\gD^s$ is
given by, see Lemma~4.2 in~\cite{KoSc06},
\begin{equation*}
 \frac{1}{2}\Bigl(r_\gl(\xi,\eta)
            +\sum_{k,m=1}^n e_{\gl,k}(\xi)\,
				\SsW_{km}(\gl)\,\frac{1}{\sgl}\,e_{\gl,m}(\eta)\Bigl),
\end{equation*}
that is --- up to a factor $2$ --- by the right hand side of~\eqref{eq4.14a}
\emph{without} the last term.

In more detail and with the definition~\eqref{SE}
\begin{equation*}
	S_{A,B}\bigl(E=-2\gl\bigr) = S^s(\gl)
\end{equation*}
holds for all $\gl>0$. As a function of $k$ ($k^2=E$), $S^s$ is meromorphic in the complex $k$--plane with
a pole on the positive imaginary axis at $k^b=2i/\gg$. This corresponds to a negative
eigenvalue $E^b=-4/\gg^2$ of $-\gD^s$. The corresponding (normalized) eigenfunction
$\psi^b$ --- physically speaking a \emph{bound state} --- is given as
\begin{equation*}
	\psi^b(\xi) = \frac{1}{2}\,\sqrt{\frac{\gg}{n}}\,e^{-2 d(v,\xi)/\gg},\qquad \xi\in\cG.
\end{equation*}
So quantum mechanically the vertex $v$ acts like an attractive potential. We view this
as a quantum analogue of the stickiness of the vertex $v$.

This analogy can be elaborated a bit further by inspecting the associated quantum mechanical
time delay matrix (see, e.g., \cite{AmJa77, BrFr02, JaMi72, Ma76, Ma81, Na80, Wi55})
\begin{equation*}
	T(k) = \frac{1}{2ik}\,S(k)^{-1}\frac{\p}{\p k} S(k)
\end{equation*}
which in the present context gives
\begin{equation*}
	T(k) = \frac{-2\gg}{k(4+k^2\gg^2)}\,P_n.
\end{equation*}
So $T(k)$ has zero as an $(n-1)$--fold eigenvalue plus the non-degenerate
eigenvalue
\begin{equation*}
	\frac{-2\gg}{k(4+k^2\gg^2)},
\end{equation*}
which for $\gg>0$ is the signal for a strict quantum delay. Observe that for
$k\to +\infty$, that is for large energies, the time delay experienced by the
quantum particle tends to zero, while for $k\to 0$, i.e., for low energies,
the delay becomes arbitrarily large. From the physical point of view, both
effects are clearly to be expected. For comparison and in contrast to the
present stochastic context, in quantum mechanics $\gg<0$ is also allowed for a
meaningful Schr\"odinger operator and an associated scattering matrix.
\end{remark}

Define for $x\ge 0$, $\gg$, $t>0$,
\begin{equation}    \label{eq4.15}
    g_{0,\gg}(t,x) = \frac{1}{\gg}\,\exp\Bigl(\frac{2x}{\gg} + \frac{2t}{\gg^2}\Bigr)\,
                    \erfc\Bigl(\frac{x}{\sqrt{2t}}+\frac{\sqrt{2t}}{\gg}\Bigr).
\end{equation}
It is not hard to check that
\begin{equation}	\label{lim_g}
    \lim_{\gg\downarrow 0} g_{0,\gg}(t,x) = g(t,x) = \frac{1}{\sqrt{2\pi t}}\,e^{-x^2/2t}.
\end{equation}
Moreover, from~\cite[eq.~(5.6.16)]{ErMa54a} (cf.\ also appendix~C
in~\cite{BMMG0}) the Laplace transform is
\begin{equation}    \label{eq4.16}
    \cL g_{0,\gg}(\,\cdot\,,x)(\gl) = \frac{1}{\sgl + \gg\gl}\,e^{-\sgl x},\qquad x\ge 0.
\end{equation}
Observe that in agreement with~\eqref{lim_g}
\begin{equation*}
	\cL g(\,\cdot\,,x)(\gl) = \frac{1}{\sgl}\,e^{-\sgl x}
\end{equation*}
holds. Now we can readily compute the inverse Laplace transform of formulae~\eqref{eq4.13},
\eqref{eq4.14}, and obtain the following result.

\begin{corollary}   \label{cor4.8}
For $t>0$, $\xi$, $\eta\in\cG$ the transition kernel of the Walsh process with sticky
vertex is given by
\begin{equation}    \label{eq4.17}
\begin{split}
    \psW(t,\xi,\eta) &= p^D(t,\xi,\eta)\,d\eta\\[1ex]
                &\qquad + \sum_{k,m=1}^n \onel(\xi)\,2w_m
                    \,g_{0,\gg}\bigl(t,d_v(\xi,\eta)\bigr)\onel[m](\eta)\,d\eta\\[1ex]
                &\qquad + \gg\,g_{0,\gg}\bigl(t, d(\xi,v)\bigr)\,\epsilon_v(d\eta)
\end{split}
\end{equation}
where $p^D$ is defined in equation~\eqref{eq1.20}, or alternatively by
\begin{equation}    \label{eq4.18}
\begin{split}
    \psW(t,\xi,d\eta)
        &= p(t,\xi,\eta)\,d\eta\\
        &\qquad +\sum_{k,m=1}^n \onel(\xi)\,\Bigl(
                        2w_m\,g_{0,\gg}\bigl(t,d_v(\xi,\eta)\bigr)\,d\eta\\[-1ex]
        &\hspace{9.5em}   -\gd_{km}\,g\bigl(t,d_v(\xi,\eta)\bigr)\,d\eta\Bigr)\, \onel[m](\eta)\\[1ex]
        &\qquad   +\gg\,g_{0,\gg}\bigl(t, d(\xi,v)\bigr)\,\epsilon_v(d\eta),
\end{split}
\end{equation}
and $p(t,a,b)$ is given in formula~\eqref{eq1.16}.
\end{corollary}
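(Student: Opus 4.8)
The plan is to obtain both representations by inverting, term by term, the Laplace transforms appearing in the resolvent kernel, using uniqueness of the Laplace transform together with the elementary inversion formulas already at our disposal. Since each summand in \eqref{eq4.13} and \eqref{eq4.14a} is, for fixed $\xi$, $\eta$, an absolutely integrable function (or, in one case, a measure) whose $\gl$-dependence is a product of an exponential $e^{-\sgl a}$ with one of the rational factors $1/\sgl$ or $1/(\sgl+\gg\gl)$, the inversion can be carried out summand by summand, and the resulting kernel is then identified by uniqueness of the Laplace transform.

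For the representation \eqref{eq4.17} I would start from \eqref{eq4.13}. The first term $r^D_\gl(\xi,\eta)$ is, as noted after \eqref{eq1.22}, the Laplace transform of the Dirichlet heat kernel $p^D(t,\xi,\eta)$, so it inverts to the first line of \eqref{eq4.17}. For the second term I would use $e_{\gl,k}(\xi)\,e_{\gl,m}(\eta)=\onel(\xi)\,\onel[m](\eta)\,e^{-\sgl d_v(\xi,\eta)}$, which follows from \eqref{eq1.14} and the definition \eqref{eq1.15} of $d_v$; each summand then equals $2w_m\,\onel(\xi)\,\onel[m](\eta)$ times $(\sgl+\gg\gl)^{-1}e^{-\sgl d_v(\xi,\eta)}$, whose inverse Laplace transform is $g_{0,\gg}\bigl(t,d_v(\xi,\eta)\bigr)$ by \eqref{eq4.16}. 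Finally, in the third term $e_\gl(\xi)=e^{-\sgl d(\xi,v)}$, so $\gg\,(\sgl+\gg\gl)^{-1}e^{-\sgl d(\xi,v)}$ inverts, again by \eqref{eq4.16}, to $\gg\,g_{0,\gg}\bigl(t,d(\xi,v)\bigr)$, carrying the Dirac mass $\epsilon_v(d\eta)$ along unchanged. This yields \eqref{eq4.17}.

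For \eqref{eq4.18} I would instead invert \eqref{eq4.14a}. Here the leading term is $r_\gl$, which by \eqref{eq1.22a} is the Laplace transform of $p(t,\xi,\eta)$ from \eqref{eq1.16}. In the double sum I would split the scattering factor using \eqref{eq4.14b}, namely $\SsW_{km}(\gl)/\sgl = 2w_m/(\sgl+\gg\gl)-\gd_{km}/\sgl$, multiply by $e_{\gl,k}(\xi)e_{\gl,m}(\eta)=\onel(\xi)\onel[m](\eta)e^{-\sgl d_v(\xi,\eta)}$ as above, and invert the two pieces separately: the first gives $2w_m\,g_{0,\gg}(t,d_v(\xi,\eta))$ by \eqref{eq4.16}, and the second gives $-\gd_{km}\,g(t,d_v(\xi,\eta))$, since $(\sgl)^{-1}e^{-\sgl x}$ is the Laplace transform of $g(\cdot,x)$. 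The Dirac term is inverted exactly as in the previous paragraph, and \eqref{eq4.18} follows.

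None of these inversions is individually difficult; the only points requiring care are the justification of term-by-term inversion and the correct treatment of the measure-valued summand $\epsilon_v(d\eta)$, which is merely a multiplicative constant in $\gl$ and so passes through the inversion untouched. As a consistency check one verifies that \eqref{eq4.17} and \eqref{eq4.18} agree: by \eqref{eq1.20} one has $p^D=p-p_v$, and by \eqref{eq1.17} the kernel $p_v(t,\xi,\eta)=\sum_k\onel(\xi)\,g(t,d_v(\xi,\eta))\,\onel(\eta)$ equals $\sum_{k,m}\onel(\xi)\,\gd_{km}\,g(t,d_v(\xi,\eta))\,\onel[m](\eta)$, so the extra $-\gd_{km}g(t,d_v)$ terms in \eqref{eq4.18} convert the $p$ appearing there back into the $p^D$ of \eqref{eq4.17}. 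I expect this bookkeeping, rather than any analytic obstacle, to be the main thing to get right.
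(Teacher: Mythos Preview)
Your proposal is correct and follows exactly the approach the paper takes: the paper simply states that one ``can readily compute the inverse Laplace transform of formulae~\eqref{eq4.13}, \eqref{eq4.14}'' using the transform identity~\eqref{eq4.16}, and records the result without further detail. You have in fact written out more of the term-by-term bookkeeping than the paper itself does.
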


We close this section with some remarks concerning the local time of $\Ws$ at the
vertex $v$, which also serve to prepare the construction of the most general
Brownian motion on the single vertex graph $\cG$ in the next section.

Let us define
\begin{equation}    \label{sLT}
    L^s_t = L^w_{\tau(t)},\qquad t\ge 0,
\end{equation}
where --- as before --- $L^w$ denotes the local time of the Walsh process at the
vertex, having (cf.\ section~\ref{sect3}) the same normalization as the local time
of a standard one-dimensional Brownian motion (cf.~\eqref{eq1.7a}). By construction,
$L^s$ is pathwise continuous and non-decreasing. It is adapted to $\cF^s$, and a
straightforward calculation based on the additivity of $L^w$ and
formula~\eqref{eq4.2} shows the (pathwise) additivity property
\begin{equation}    \label{add_sLT}
    L^s_{s+t} = L^s_t + L^s_s \comp \theta^s_t,\qquad s,\,t\ge 0.
\end{equation}
Thus $L^s$ is a PCHAF of $(W^s,\cF^s)$. Furthermore, $t\ge 0$ is a point of increase
for $L^s$ if only if $\tau(t)$ is a point of increase for $L^w$, which only is the
case if $W_{\tau(t)}$ is at the vertex, i.e., if $W^s_t$ is at the vertex. Thus, it
follows that $L^s$ is a local time at the vertex for $W^s$. In order to completely
identify it, it remains to compute its normalization, and it is not very hard to
compute its $\ga$--potential (the interested reader can find the details for the
case $\cG=\R_+$ in~\cite{BMMG0}):
\begin{equation}    \label{norm_sLT}
    E_\xi\Bigl(\int_0^\infty e^{-\ga t}\,dL^s_t\Bigr)
        = \frac{1}{\sqrt{2\ga}+\gg\ga}\,e^{-\sqrt{2\ga}\,d(\xi,v)},\qquad \ga>0,\,\xi\in\cG.
\end{equation}


\section{The General Brownian Motion on a Single Vertex Graph}  \label{sect5}
Finally, in this subsection we construct a Brownian motion $\Wg$ by killing the Walsh
process with sticky vertex of section~\ref{sect4} in a similar way as in the
construction of the elastic Walsh process (cf.~section~\ref{sect3}). $\Wg$ realizes
the boundary condition~\eqref{eq1.1} in its most general form.

Consider the sticky Walsh process $\Ws$ with stickiness parameter $\gg>0$, right
continuous and complete filtration $\cF^s$, and local time $L^s$ at the vertex. We
argued in section~\ref{sect4} that $L^s$ is a PCHAF for $(W^s,\cF^s)$, and therefore
we can apply the method of killing described in section~\ref{ssect1.5}: We bring in
the additional probability space $(\R_+,\cB(\R_+),P_\gb)$ where $P_\gb$ is the
exponential law of rate $\gb>0$, and the canonical coordinate random variable $S$.
Then we take the family of product spaces $\bigl(\hgO,\hcA,(\hP_\xi,\,\xi\in\cG)\bigr)$
of $\bigl(\gO,\cA,(P_\xi,\,\xi\in\cG)\bigr)$ and $(\R_+,\cB(\R_+),P_\gb)$. Define
the random time
\begin{equation}    \label{eq5.1}
    \zeta_{\gb,\gg} = \inf\bigl\{t\ge 0,\,L^s_{t}>S\bigr\}.
\end{equation}
Then by the arguments given in section~\ref{ssect1.5}, the stochastic process $\Wg$
defined by $\Wg_t=\Ws_t$ for $t\in[0,\zeta_{\gb,\gg})$, and $\Wg_t=\gD$ for $t\ge
\zeta_{\gb,\gg}$, is again a Brownian motion on $\cG$ in the sense of
definition~\ref{def_BM}.

Denote by $K^s$ the right continuous pseudo-inverse of $L^s$. Since $L^s$
is continuous (cf.\ equation~\eqref{sLT}), we get $L^s_{K^s_r}=r$ for all
$r\in\R_+$. Recall that the right continuous pseudo-inverse of the local
time $L^w$ of the Walsh process was denoted by $K^w$. Then we have the following

\begin{lemma}	\label{lem5.1a}
For all $\gg\ge 0$, the following relation holds true:
\begin{equation}	\label{eq5.1a}
	K^s_r = K^w_r + \gg r,\qquad r\in\R_+.
\end{equation}
\end{lemma}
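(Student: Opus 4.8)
The statement relates the right-continuous pseudo-inverse $K^s$ of the sticky local time $L^s$ to the pseudo-inverse $K^w$ of the Walsh local time $L^w$. The natural strategy is to unwind the definitions and exploit the two pathwise identities that have already been established: the time change $\tau^{-1}(t)=t+\gg L^w_t$ from equation~\eqref{eq4.1}, and the definition $L^s_t=L^w_{\tau(t)}$ from equation~\eqref{sLT}. Since $\tau$ is strictly increasing, continuous, and bijective from $\R_+$ onto $\R_+$, I can freely pass between the two time scales, and this is what makes the argument work.

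**The key computation.** First I would fix $r\in\R_+$ and recall $K^s_r=\inf\{t\ge 0,\ L^s_t>r\}$. Substituting $L^s_t=L^w_{\tau(t)}$ gives $K^s_r=\inf\{t\ge 0,\ L^w_{\tau(t)}>r\}$. Now apply the strictly increasing bijection $\tau^{-1}$: writing $u=\tau(t)$, so that $t=\tau^{-1}(u)$, the condition $L^w_{\tau(t)}>r$ becomes $L^w_u>r$, and because $\tau^{-1}$ is increasing the infimum over $t$ is attained at the image under $\tau^{-1}$ of the infimum over $u$. Concretely,
\begin{equation*}
    K^s_r = \inf\bigl\{\tau^{-1}(u),\ u\ge 0,\ L^w_u>r\bigr\}
          = \tau^{-1}\bigl(\inf\{u\ge 0,\ L^w_u>r\}\bigr)
          = \tau^{-1}(K^w_r),
\end{equation*}
where the middle equality uses continuity and strict monotonicity of $\tau^{-1}$, and the last is just the definition~\eqref{eq_def_K} of $K^w$. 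Then I would insert $\tau^{-1}(t)=t+\gg L^w_t$ evaluated at $t=K^w_r$ to get $K^s_r=K^w_r+\gg L^w_{K^w_r}$. The final step is to invoke the a.s.\ identity $L^w_{K^w_r}=r$, which holds because $L^w$ has continuous paths (this is exactly the remark following equation~\eqref{eq1.11} applied to $L^w$), yielding $K^s_r=K^w_r+\gg r$.

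**Where the care is needed.** The one place that is not purely formal is the interchange of the infimum with the application of $\tau^{-1}$, i.e.\ the claim that $\inf\{\tau^{-1}(u):L^w_u>r\}=\tau^{-1}(\inf\{u:L^w_u>r\})$. This is the main obstacle, though a mild one: it requires that $\tau^{-1}$ be continuous and strictly increasing (so that it commutes with infima of sublevel-type sets and preserves the infimum value), both of which were noted in the paragraph introducing~\eqref{eq4.1}. I would also record that the relation holds for all $\gg\ge 0$, and that the degenerate case $\gg=0$ gives $\tau^{-1}=\mathrm{id}$ and hence $K^s=K^w$ trivially, consistent with the formula. No heavier machinery (no excursion theory, no Markov property) is needed here, since this is a deterministic pathwise statement about the time change once the continuity of $L^w$ is granted.
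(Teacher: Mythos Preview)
Your proof is correct and follows essentially the same route as the paper: both arguments establish the key identity $K^s_r=\tau^{-1}(K^w_r)$ and then evaluate $\tau^{-1}$ using~\eqref{eq4.1} together with $L^w_{K^w_r}=r$. The paper phrases the first step in terms of the level sets $J_\gg(r)=\{t\ge 0:L^s_t>r\}$ and $J_0(r)=\{u\ge 0:L^w_u>r\}$, observing that $J_\gg(r)=\tau^{-1}\bigl(J_0(r)\bigr)$ and reading off the left endpoints, which is exactly your change of variables $u=\tau(t)$ written at the level of sets rather than infima.
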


\begin{proof}
For $\gg$, $r\in\R_+$ define the random subset
\begin{equation*}
    J_\gg(r) = \{t\ge 0,\, L^s_t>r\}
\end{equation*}
of $\R_+$. Since $L^s$ is pathwise increasing, this set is a random interval
with endpoints $K^s_r$ and $+\infty$. The relation $L^s_{K^s_r}=r$
implies that
\begin{equation*}
	J_\gg(r) = (K^s_r,+\infty).
\end{equation*}
In particular, we have $J_0 = (K^w_r,+\infty)$. Now
\begin{equation*}
    t\in J_\gg(r) \Leftrightarrow L^s_t = L^w_{\tau(t)} > r \Leftrightarrow \tau(t)\in J_0(r).
\end{equation*}
In other words, $J_\gg(r) = \tau^{-1}\bigl(J_0(r))$, and therefore
$K^s_r=\tau^{-1}(K^w_r)$ holds. From the definition of $\tau^{-1}$ (see equation~\eqref{eq4.1}),
and the relation  $L^s_{K^s_r}=r$ we obtain formula~\eqref{eq5.1a}
\end{proof}

In the proof of lemma~\ref{lem3.3} the Laplace transform of the density of
$K^w_r$, $r\ge 0$, under $P_v$ has been determined as $\gl \mapsto
\exp(-\sgl r)$. Hence we have
\begin{equation*}
	P_v(K^w_r\in dl) = \frac{r}{\sqrt{2\pi l^3}}\,e^{-r^2/2l}\,dl,\qquad l\ge 0.
\end{equation*}
As a consequence we find the

\begin{corollary}   \label{cor5.1b}
For $r\ge 0$, $K^s_r$ has the density
\begin{equation}	\label{eq5.1b}
	P_v(K^s_r\in dl)
		= \frac{r}{\sqrt{2\pi (l-\gg r)^3}}\,e^{-r^2/2(l-\gg r)}\,dl,\qquad l\ge \gg r.
\end{equation}
Furthermore, the Laplace transform of the density of $K^s_r$ under $P_v$ is given
by
\begin{equation}	\label{eq5.1c}
	E_v\bigl(e^{-\gl K^s_r}\bigr)
		= e^{-(\sgl + \gl \gg)r},\qquad \gl >0.
\end{equation}
\end{corollary}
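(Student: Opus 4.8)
The plan is to read off both assertions directly from the deterministic shift relation of Lemma~\ref{lem5.1a}, namely $K^s_r = K^w_r + \gg r$, together with the law of $K^w_r$ under $P_v$ recorded immediately above the corollary.

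First I would treat the density~\eqref{eq5.1b}. For fixed $r$ and $\gg$ the quantity $\gg r$ is a deterministic constant, so $K^s_r$ is simply $K^w_r$ translated by $\gg r$. Hence the law of $K^s_r$ under $P_v$ is the pushforward of the law of $K^w_r$ under $l\mapsto l+\gg r$; concretely one substitutes $l\mapsto l-\gg r$ in the density $\tfrac{r}{\sqrt{2\pi l^3}}\,e^{-r^2/2l}$ and shifts the support from $l\ge 0$ to $l\ge \gg r$. This yields~\eqref{eq5.1b} at once, and the support condition $l\ge\gg r$ is consistent with $K^s_r=K^w_r+\gg r\ge\gg r$ since $K^w_r\ge 0$.

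For the Laplace transform~\eqref{eq5.1c} I would factor $e^{-\gl K^s_r}=e^{-\gl\gg r}\,e^{-\gl K^w_r}$ via Lemma~\ref{lem5.1a}, pull the deterministic factor $e^{-\gl\gg r}$ out of the expectation, and invoke the computation from the proof of Lemma~\ref{lem3.3} (equivalently Lemma~\ref{lem1.2}), by which $E_v\bigl(e^{-\gl K^w_r}\bigr)=e^{-\sgl r}$. Multiplying the two factors gives $E_v\bigl(e^{-\gl K^s_r}\bigr)=e^{-(\sgl+\gl\gg)r}$, which is exactly~\eqref{eq5.1c}.

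There is essentially no obstacle here, since both claims are elementary consequences of a deterministic translation; the only point worth a sanity check is internal consistency. One can confirm that the Laplace transform of the shifted density~\eqref{eq5.1b} reproduces~\eqref{eq5.1c}: a change of variables $l\mapsto l-\gg r$ in $\int_{\gg r}^\infty e^{-\gl l}\,P_v(K^s_r\in dl)$ splits off the factor $e^{-\gl\gg r}$ and reduces the remaining integral to the known Laplace transform of $K^w_r$. Equivalently, one recognizes the right-hand side of~\eqref{eq5.1b} as the inverse Laplace transform of~\eqref{eq5.1c}, so the two displayed formulas describe the same law.
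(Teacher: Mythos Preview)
Your proposal is correct and matches the paper's approach: the corollary is stated as an immediate consequence of Lemma~\ref{lem5.1a} together with the density of $K^w_r$ recorded just before it, and you have simply written out that deduction in full. The consistency check you add at the end is exactly the content of Remark~\ref{rem5.1c}.
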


\begin{remark}	\label{rem5.1c}
One can use lemma~C.1 in~\cite{BMMG0} to check that the right hand side
of equation~\eqref{eq5.1b} is indeed the inverse Laplace transform of the right hand side
of formula~\eqref{eq5.1c}.
\end{remark}

Observe that $\zeta_{\gb,\gg} = K^s_S$ and $\zeta_{\gb,0}=K^w_S$. Thus we obtain the

\begin{corollary}   \label{lem5.1}
For all $\gb>0$, $\gg\ge0$, the following equation holds true
\begin{equation}    \label{eq5.2}
    \zeta_{\gb,\gg} = \zeta_{\gb,0}+\gg S.
\end{equation}
\end{corollary}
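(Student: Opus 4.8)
The plan is to read the identity off directly from Lemma~\ref{lem5.1a} by substituting the random variable $S$ for the deterministic parameter $r$. As recorded in the sentence preceding the statement, we have $\zeta_{\gb,\gg} = K^s_S$ (from the definition~\eqref{eq5.1} together with the measurability of $K^s$) and $\zeta_{\gb,0} = K^w_S$, where $K^s$ and $K^w$ are the right continuous pseudo-inverses of $L^s$ and $L^w$, respectively. Hence the claimed equation~\eqref{eq5.2} is nothing but the relation $K^s_r = K^w_r + \gg r$ of Lemma~\ref{lem5.1a}, evaluated at $r=S$.

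First I would stress that the identity of Lemma~\ref{lem5.1a} is in fact a \emph{pathwise} statement that holds simultaneously for all $r\ge 0$. Its proof produces $K^s_r = \tau^{-1}(K^w_r)$ and then, via the form~\eqref{eq4.1} of $\tau^{-1}$ together with $L^w_{K^w_r}=r$, concludes $K^s_r=K^w_r+\gg r$. Both of these inputs hold on the almost sure event on which $L^w$ is continuous, and the continuity of $L^w$ yields $L^w_{K^w_r}=r$ for \emph{every} $r\ge 0$ at once; neither input depends on $r$ except through the value $K^w_r$. Thus on a single $P_v$--almost sure event the equality $K^s_r=K^w_r+\gg r$ is valid for all $r\ge 0$ simultaneously. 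Since $S$ is the coordinate random variable on the adjoined factor $(\R_+,\cB(\R_+),P_\gb)$, composing the pathwise identity with $r=S$ gives, $\hP_\xi$--a.s.\ for every $\xi\in\cG$,
\begin{equation*}
    \zeta_{\gb,\gg} = K^s_S = K^w_S + \gg S = \zeta_{\gb,0} + \gg S,
\end{equation*}
which is precisely~\eqref{eq5.2}.

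The only point requiring care --- and the one I would flag as the main obstacle, albeit a mild one --- is the legitimacy of inserting the random $S$ into a relation established for a fixed parameter. This is why the simultaneous-in-$r$ reading of Lemma~\ref{lem5.1a} is essential: once the equality holds on one almost sure event for all $r$ at once, the substitution is purely pathwise. For a product point $(\go,s)$ one simply reads $K^s_{S}(\go)=K^w_{S}(\go)+\gg\,S$ off the $\go$--path at the value $r=s=S$, so no independence or distributional property of $S$ is needed; its measurability and a.s.\ finiteness suffice. Everything else is a direct substitution.
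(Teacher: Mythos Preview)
Your proof is correct and follows exactly the paper's approach: the paper simply notes that $\zeta_{\gb,\gg}=K^s_S$ and $\zeta_{\gb,0}=K^w_S$ and then reads off the corollary from Lemma~\ref{lem5.1a}. Your additional care in justifying the pathwise substitution $r=S$ is welcome but not strictly needed beyond what the paper already assumes.
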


As before, $\hE_\xi$ denotes the expectation with respect to $\hP_\xi$, $\xi\in\cG$.

\begin{corollary}   \label{cor5.2}
For all $\gb>0$, $\gg\ge 0$, $\gl>0$, the following formula holds true
\begin{subequations}    \label{eq5.3}
\begin{equation}    \label{eq5.3a}
    \hE_v\bigl(e^{-\gl \zeta_{\gb,\gg}}\bigr) = \gb \rho(\gl),
\end{equation}
with
\begin{equation}    \label{eq5.3b}
    \rho(\gl) = \frac{1}{\gb+\sgl+\gg\gl}.
\end{equation}
\end{subequations}
\end{corollary}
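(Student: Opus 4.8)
The plan is to exploit the representation $\zeta_{\gb,\gg}=K^s_S$ recorded just before corollary~\ref{lem5.1}, together with the product structure of the extended probability space $\hP_v=P_v\otimes P_\gb$ and the Laplace transform of $K^s_r$ established in corollary~\ref{cor5.1b}. The whole computation reduces to conditioning on the value of $S$ and performing one elementary integral.

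First I would condition on $S$. Because $P_\gb$ is the exponential law of rate $\gb$ and $\hP_v=P_v\otimes P_\gb$, the coordinate variable $S$ is independent of every quantity defined on $(\gO,\cA,P_v)$, in particular of the family $(K^s_r,\,r\ge 0)$. Hence Fubini's theorem yields
\begin{equation*}
    \hE_v\bigl(e^{-\gl \zeta_{\gb,\gg}}\bigr)
        = \hE_v\bigl(e^{-\gl K^s_S}\bigr)
        = \gb\int_0^\infty e^{-\gb r}\,E_v\bigl(e^{-\gl K^s_r}\bigr)\,dr.
\end{equation*}
Next I would insert the Laplace transform $E_v(e^{-\gl K^s_r})=e^{-(\sgl+\gg\gl)r}$ from formula~\eqref{eq5.1c}, which collapses the right hand side to
\begin{equation*}
    \gb\int_0^\infty e^{-(\gb+\sgl+\gg\gl)r}\,dr
        = \frac{\gb}{\gb+\sgl+\gg\gl}
        = \gb\,\rho(\gl),
\end{equation*}
i.e.\ exactly the assertion~\eqref{eq5.3}.

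For completeness I would note the alternative route through corollary~\ref{lem5.1}: one may instead start from $\zeta_{\gb,\gg}=\zeta_{\gb,0}+\gg S$; since $\zeta_{\gb,0}=K^w_S$ still depends on $S$, the expectation does not factor directly, but conditioning on $S=r$ and using $E_v(e^{-\gl K^w_r})=e^{-\sgl r}$ (obtained in the proof of lemma~\ref{lem3.3}) produces the same integrand $e^{-(\gb+\sgl+\gg\gl)r}$ and hence the same answer.

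The computation is essentially routine, so there is no serious obstacle; the only point that deserves a word of justification is the independence of $S$ from $(K^s_r)$, which is immediate from the product form of $\hP_v$. Thus the real content sits entirely in the Laplace transform identity~\eqref{eq5.1c}, and once that is available the corollary follows by a single integration against the exponential law.
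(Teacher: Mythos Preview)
Your proof is correct and follows essentially the same route as the paper: write $\zeta_{\gb,\gg}=K^s_S$, integrate $E_v(e^{-\gl K^s_r})=e^{-(\sgl+\gg\gl)r}$ from corollary~\ref{cor5.1b} against the exponential law of $S$, and read off $\gb\rho(\gl)$. The paper's version is simply a terser two-line computation of exactly this argument.
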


\begin{proof}
With corollary~\ref{cor5.1b} and $\zeta_{\gb,\gg} = K^s_S$ we obtain
\begin{align*}
    \hE_v\bigl(e^{-\gl \zeta_{\gb,\gg}}\bigr)
        &= \gb \int_0^\infty E_v\bigl(e^{-\gl K^s_r}\bigr)\,e^{-\gb r}\,dr\\
	   &= \frac{\gb}{\gb + \sgl + \gg \gl}.	\qedhere
\end{align*}
\end{proof}

Denote by $\Rg$ the resolvent of $\Wg$. With lemma~\ref{lem1.6} we immediately find
the

\begin{corollary}   \label{cor5.3}
For all $f\in\Co$, $\gl>0$, $\xi\in\cG$ the following formula holds true:
\begin{equation}    \label{eq5.4}
    \Rg_\gl f(\xi)
        = \RsW_\gl f(\xi) - \gb\rho(\gl)\,e_\gl(\xi)\,\RsW_\gl f(v).
\end{equation}
\end{corollary}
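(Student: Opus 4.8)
The plan is to obtain~\eqref{eq5.4} as a direct application of lemma~\ref{lem1.6}, with the sticky Walsh process $\Ws$ playing the role of the process $X$ of subsection~\ref{ssect1.5} and $\Wg$ playing the role of the killed subprocess $\hX$. First I would check that the construction of $\Wg$ is precisely an instance of the killing recipe of subsection~\ref{ssect1.5}. This is already in place: by the analysis of section~\ref{sect4}, $\Ws$ is a Brownian motion on $\cG$ in the sense of definition~\ref{def_BM} with infinite lifetime whose vertex is not a trap, and $L^s$ is a PCHAF of $(\Ws,\cF^s)$ serving as its local time at the vertex. Hence killing $\Ws$ exponentially on the scale of $L^s$ at rate $\gb$ produces exactly $\Wg$, and the lifetime $\zeta_{\gb,\gg}=K^s_S$ corresponds to the random time $\zeta_\gb$ of subsection~\ref{ssect1.5}.

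Granting this identification, lemma~\ref{lem1.6} applies verbatim with $X=\Ws$ and $\hX=\Wg$, and yields for all $f\in\Co$, $\gl>0$, $\xi\in\cG$
\begin{equation*}
    \Rg_\gl f(\xi) = \RsW_\gl f(\xi) - e_\gl(\xi)\,\hE_v\bigl(e^{-\gl\zeta_{\gb,\gg}}\bigr)\,\RsW_\gl f(v).
\end{equation*}
It then remains only to insert the Laplace transform $\hE_v\bigl(e^{-\gl\zeta_{\gb,\gg}}\bigr)=\gb\rho(\gl)$ furnished by corollary~\ref{cor5.2}, which immediately gives the asserted formula~\eqref{eq5.4}.

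Since lemma~\ref{lem1.6} and corollary~\ref{cor5.2} do essentially all the work, there is no genuine obstacle here; the statement is little more than a combination of these two results. The only point deserving attention — and the step I would actually spell out — is the verification that the general framework of subsection~\ref{ssect1.5} legitimately applies to the pair $(\Ws,L^s)$, namely that $L^s$ is indeed a PCHAF and that $v$ is regular (not a trap) for $\Ws$, both of which were established in section~\ref{sect4}.
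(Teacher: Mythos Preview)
Your proposal is correct and matches the paper's approach exactly: the paper introduces the corollary with the sentence ``With lemma~\ref{lem1.6} we immediately find the'', so the intended argument is precisely to apply lemma~\ref{lem1.6} to the pair $(\Ws,\Wg)$ and then insert the value $\hE_v\bigl(e^{-\gl\zeta_{\gb,\gg}}\bigr)=\gb\rho(\gl)$ from corollary~\ref{cor5.2}. Your remarks about checking the applicability of subsection~\ref{ssect1.5} are appropriate but, as you note, already covered in section~\ref{sect4}.
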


Now it is easy to verify that for appropriately chosen parameters $\gb$, $\gg$,
$w_k$, $\kn$, the Brownian motion $W_g$ realizes the boundary
condition~\eqref{eq1.1b}.

\begin{theorem} \label{thm5.4}
Consider the boundary condition~\eqref{eq1.1}, and assume that $b$ is not the null
vector. Set $r=a+c\in(0,1)$, and
\begin{equation}    \label{eq5.5a}
    w_k = \frac{b_k}{1-r},\,\kn,\quad \gb = \frac{a}{1-r},\quad \gg = \frac{c}{1-r}.
\end{equation}
Let $\Wg$ be the Brownian motion as constructed above with these parameters.
Then the generator $A^g$ of $\Wg$ is $1/2$ times the Laplace operator on $\cG$ with domain
$\cD(A^g)$ consisting of those $f\in\Cii$ which satisfy condition~\eqref{eq1.1b}.
\end{theorem}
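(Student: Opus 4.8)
The plan is to give a resolvent-based proof entirely parallel to the alternative proofs of Theorems~\ref{thm3.2} and~\ref{thm4.2}. Since $\Wg$ has already been shown to be a Brownian motion on $\cG$ in the sense of Definition~\ref{def_BM}, Theorem~\ref{thm_Feller} guarantees that its generator acts as $\frac{1}{2}\,\p^2$ on the edges and that $\cD(A^g)$ is characterised by a boundary condition of the form~\eqref{eq1.1b}; moreover, for every $\gl>0$ the resolvent $\Rg_\gl$ maps $\Co$ onto $\cD(A^g)$. Hence it suffices to verify that every function $g=\Rg_\gl f$, $f\in\Co$, satisfies~\eqref{eq1.1b} with the prescribed constants. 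First I would observe that $b\ne 0$ forces $1-r=\sum_k b_k>0$, so the definitions~\eqref{eq5.5a} are legitimate and yield $w_k\in[0,1]$, $\sum_k w_k=1$, $\gb\ge 0$, $\gg\ge 0$; dividing~\eqref{eq1.1b} by $1-r$ then reduces the claim to the single identity
\begin{equation*}
    \gb\,g(v) + \frac{\gg}{2}\,g''(v) = \sum_{k=1}^n w_k\,g'(v_k),\qquad g=\Rg_\gl f .
\end{equation*}

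Next I would feed the explicit resolvent formula~\eqref{eq5.4} from Corollary~\ref{cor5.3} into the three boundary quantities, using the elementary edgewise identities $e_\gl(v)=1$, $e_\gl'(v_k)=-\sgl$, and $e_\gl''(v_k)=2\gl$ (immediate from $e_{\gl,k}(x)=e^{-\sgl x}$). Writing $G:=\RsW_\gl f(v)$ and $\rho=\rho(\gl)=(\gb+\sgl+\gg\gl)^{-1}$, this gives
\begin{align*}
    g(v)   &= G - \gb\rho\,G, \\
    g'(v_k)&= (\RsW_\gl f)'(v_k) + \gb\rho\,\sgl\,G, \\
    g''(v) &= (\RsW_\gl f)''(v) - 2\gl\,\gb\rho\,G .
\end{align*}
Substituting into the displayed identity and collecting the correction terms proportional to $\gb\rho\,G$ produces the factor $\gb+\gg\gl+\sgl=\rho^{-1}$, so those terms collapse to exactly $-\gb G$ and cancel the leading $\gb\,g(v)$ contribution. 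What remains is precisely
\begin{equation*}
    \frac{\gg}{2}\,(\RsW_\gl f)''(v) - \sum_{k=1}^n w_k\,(\RsW_\gl f)'(v_k),
\end{equation*}
which vanishes because $\RsW_\gl f$ lies in $\cD(A^s)$ and therefore obeys the sticky boundary condition~\eqref{eq4.7} established in Theorem~\ref{thm4.2}. For this last step I would simply quote the three building blocks already available: Corollary~\ref{cor4.6} for $\RsW_\gl f(v)$, Lemma~\ref{lem4.5} for $(\RsW_\gl f)''(v)$, and Corollary~\ref{cor1.5} summed against the weights $w_k$ (together with $\sum_k w_k=1$ and the definition~\eqref{eq4.9} of $e^w_\gl$) for $\sum_k w_k(\RsW_\gl f)'(v_k)$; these express all three sticky-Walsh quantities through $(e^w_\gl,f)$ and $f(v)$.

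Since $\Rg_\gl$ is onto $\cD(A^g)$, the verified identity holds on the entire domain, so $\cD(A^g)$ is contained in the subspace of $\Cii$ cut out by~\eqref{eq1.1b}. Feller's theorem asserts that $\cD(A^g)$ is itself exactly such a subspace for \emph{some} normalised constants; as one proper codimension-one constraint subspace is then contained in another, the two linear relations must be proportional, and the common normalisation $a+c+\sum_k b_k=1$ forces the constants to be the prescribed ones. Together with $A^g f=\frac{1}{2}f''$ (again from Theorem~\ref{thm_Feller}) this completes the identification.

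The main obstacle is not conceptual but organisational: the whole argument rests on the single cancellation $\gb\rho(\gl)(\gb+\sgl+\gg\gl)=\gb$, and the real work is to route the $e_\gl$-derivative corrections through~\eqref{eq5.4} cleanly enough that this cancellation becomes visible and the residual expression is \emph{recognised} as the already-proved sticky boundary condition rather than re-derived. I would also record that the limiting parameter choices $\gb=0$ (i.e.\ $a=0$) and $\gg=0$ (i.e.\ $c=0$) degenerate the construction back to the sticky Walsh process of Section~\ref{sect4} and the elastic Walsh process of Section~\ref{sect3}, so those borderline cases are subsumed by Theorems~\ref{thm4.2} and~\ref{thm3.2} and need no separate treatment.
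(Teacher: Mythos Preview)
Your proposal is correct and follows essentially the same resolvent-based route as the paper. Both arguments differentiate formula~\eqref{eq5.4} at the vertex, invoke the sticky boundary condition~\eqref{eq4.7} for $\RsW_\gl f$, and exploit the single algebraic cancellation $\gb\rho(\gl)\bigl(\gb+\sgl+\gg\gl\bigr)=\gb$; the only difference is organisational---you compute $g(v)$, $g'(v_k)$, $g''(v)$ separately and substitute, whereas the paper starts from $\tfrac{\gg}{2}(\Rg_\gl h)''(v)$ and rewrites it term by term into the form~\eqref{eq5.8}. Your closing remark on why the verified constraint must coincide with the one guaranteed by Feller's theorem is a point the paper leaves implicit, and the detour through Corollary~\ref{cor4.6}, Lemma~\ref{lem4.5}, and Corollary~\ref{cor1.5} is unnecessary since~\eqref{eq4.7} alone suffices, but neither affects correctness.
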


\begin{proof}
As in the previous cases it is readily seen that the definition~\eqref{eq5.5a} of
the parameters $\gg$, $\gb$, $w_k$, $\kn$, is consistent with the conditions used in
the above construction of $\Wg$.

Let $A^g$ be the generator of $\Wg$ with domain $\cD(A^g)$. Since $\Wg$ is a Brownian
motion on $\cG$ in the sense of definition~\ref{def_BM}, it follows from
theorem~\ref{thm_Feller} that $\cD(A^g)\subset \Cii$, and that for all $f\in\cD(A^g)$,
$A^g f(\xi) = 1/2\,f''(\xi)$, $\xi\in\cG$. Let $h\in\Co$, $\gl>0$. Then $\Rg_\gl
h\in\cD(A^g)$, and therefore we may compute with equation~\eqref{eq5.4} as follows
\begin{align*}
    \frac{\gg}{2}\,\bigl(\Rg_\gl h\bigr)''(v)
        &= \frac{\gg}{2}\,\bigl(\RsW_\gl h\bigr)''(v)
            - \gb\,\rho(\gl)\,2\gl \bigl(\RsW_\gl h\bigr)(v)\\
        &= \sum_{k=1}^n w_k\,\bigl(\RsW_\gl h\bigr)'(v_k)
            - \gb\,\rho(\gl)\,\gg\gl \bigl(\RsW_\gl h\bigr)(v),
\end{align*}
where we used the fact that, since $\RsW_\gl h$ is in the domain of the generator $A^s$ of
$\Ws$, it satisfies the boundary condition~\eqref{eq4.7}. We rewrite this equation
in the following way:
\begin{equation}    \label{eq5.5}
\begin{split}
    \frac{\gg}{2}\,\bigl(\Rg_\gl h\bigr)''(v)
        = \sum_{k=1}^n w_k\,\bigl(\RsW_\gl h\bigr)'(v_k)
                &+\gb\sgl\,\rho(\gl) \bigl(\RsW_\gl h\bigr)(v)\\
                &-\gb\,\rho(\gl)\bigl(\sgl+\gg\gl\bigr)\,\bigl(\RsW_\gl h\Bigr)(v).
\end{split}
\end{equation}
Now we differentiate equation~\eqref{eq5.4} at $\xi\in l_k$, $\kn$, let $\xi$ tend to
$v$ along any edge $l_k$, and sum the resulting equation against the weights $w_k$,
$\kn$. Then we get the following formula
\begin{equation}    \label{eq5.6}
    \sum_{k=1}^n w_k\,\bigl(\Rg_\gl h\bigr)'(v_k)
        = \sum_{k=1}^n w_k\,\bigl(\RsW_\gl h\bigr)'(v_k)
            + \gb\sgl\,\rho(\gl)\bigl(\RsW_\gl h\bigr)(v),
\end{equation}
where we used $\sum_k w_k=1$. On the other hand, for $\xi=v$, equation~\eqref{eq5.4}
gives
\begin{equation}    \label{eq5.7}
    \bigl(\Rg_\gl h\bigr)(v) = \rho(\gl)\bigl(\sgl +\gg\gl\bigr)\bigl(\RsW_\gl h\bigr)(v).
\end{equation}
A comparison of equations~\eqref{eq5.6}, \eqref{eq5.7} with \eqref{eq5.5} shows
that we have proved the following formula
\begin{equation}    \label{eq5.8}
    \frac{\gg}{2}\,\bigl(\Rg_\gl h\bigr)''(v)
        = \sum_{k=1}^n w_k\,\bigl(\Rg_\gl h\bigr)'(v_k) - \gb \bigl(\Rg_\gl h\bigr)(v).
\end{equation}
With the values~\eqref{eq5.5a} for $\gb$, $\gg$, and $w_k$, $\kn$, it is obvious that
$f=\Rg_\gl h$ satisfies equation~\eqref{eq1.1b}. Since $\Rg_\gl$ is surjective from
$\Co$ onto the domain of the generator $A^g$ of $\Wg$, the proof of the theorem is finished.
\end{proof}

Let $\gl>0$, $f\in\Co$. Insertion of the right hand side of formula~\eqref{eq4.12}
for $\RsW_\gl$ into equation~\eqref{eq5.4} gives us after some simple algebra the
following expression for $\Rg_\gl f$:
\begin{equation}    \label{eq5.9}
    \Rg_\gl f(\xi)
        = R^D_\gl f(\xi) + \rho(\gl)\,e_\gl(\xi)\,\bigl(2(e_\gl^w,f) + \gg f(v)\bigr),
            \qquad \xi\in\cG,
\end{equation}
where $R^D$ is the Dirichlet resolvent, $e_\gl$ is defined in equation~\eqref{eq1.14},
$e^w_\gl$ in equation~\eqref{eq4.9}, and $\rho(\gl)$ is as in formula~\eqref{eq5.3b}.
From equation~\eqref{eq5.9} we can read off the following result:

\begin{corollary}   \label{cor5.5}
For $\xi$, $\eta\in\cG$, $\gl>0$, the resolvent kernel $\rg_\gl$ of the general Brownian
motion $\Wg$ on $\cG$ is given  by
\begin{equation}    \label{eq5.10}
\begin{split}
    \rg_\gl(\xi,d\eta)
        = r^D_\gl(\xi,\eta)\,d\eta
            +\sum_{k,m=1}^n e_{\gl,k}&(\xi)\,2w_m\,\rho(\gl)\,e_{\gl,m}(\eta)\,d\eta\\
                & + \gg\,\rho(\gl)\,e_\gl(\xi)\,\epsilon_v(d\eta),
\end{split}
\end{equation}
with $r^D_\gl$ as in formula~\eqref{eq1.22}, and $\rho$ is defined in
equation~\eqref{eq5.3b}. Alternatively, $\rg_\gl$ can be written in the following form
\begin{subequations}    \label{eq5.11}
\begin{equation}    \label{eq5.11a}
\begin{split}
    \rg_\gl(\xi,d\eta)
        = r_\gl(\xi,\eta)\,d\eta
            +\sum_{k,m=1}^n e_{\gl,k}&(\xi)\,\Sg_{km}(\gl)
                \,\frac{1}{\sgl}\,e_{\gl,m}(\eta)\,d\eta\\
                & + \gg\,\rho(\gl)\,e_\gl(\xi)\,\epsilon_v(d\eta),
\end{split}
\end{equation}
where $r_\gl$ is defined in equation~\eqref{eq1.22a}, and
\begin{equation} \label{eq5.11b}
    \Sg_{km}(\gl)
        = 2\,\sgl\,\rho(\gl)\,w_m - \gd_{km}.
\end{equation}
\end{subequations}
\end{corollary}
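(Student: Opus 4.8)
The plan is to obtain both representations directly from the closed form~\eqref{eq5.9} of the resolvent $\Rg_\gl$ by reading off its kernel, i.e.\ by rewriting each of the three summands on the right hand side of~\eqref{eq5.9} as an integral of $f$ against a measure in the variable $\eta$. First I would treat these summands separately. The Dirichlet term $R^D_\gl f(\xi)$ is, by the very definition~\eqref{eq1.21}--\eqref{eq1.22} of the Dirichlet resolvent, equal to $\int_\cG r^D_\gl(\xi,\eta)\,f(\eta)\,d\eta$, so it contributes the absolutely continuous piece $r^D_\gl(\xi,\eta)\,d\eta$ to the kernel.

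For the second summand I would expand the pairing using the convention $(e^w_\gl,f)=\sum_{m=1}^n\int_0^\infty w_m\,e_{\gl,m}(\eta)\,f_m(\eta)\,d\eta$ together with definition~\eqref{eq4.9} of $e^w_\gl$. Writing $e_\gl(\xi)=\sum_{k=1}^n e_{\gl,k}(\xi)$ on the edges, each restriction $e_{\gl,k}$ being supported on $l_k$ as in the notation of lemma~\ref{lem2.3}, the factor $2\rho(\gl)\,e_\gl(\xi)\,(e^w_\gl,f)$ becomes $\int_\cG\bigl(\sum_{k,m=1}^n e_{\gl,k}(\xi)\,2w_m\,\rho(\gl)\,e_{\gl,m}(\eta)\bigr)f(\eta)\,d\eta$, which is precisely the double sum in~\eqref{eq5.10}. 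The third summand $\gg\,\rho(\gl)\,e_\gl(\xi)\,f(v)$ I would write as $\int_\cG \gg\,\rho(\gl)\,e_\gl(\xi)\,f(\eta)\,\epsilon_v(d\eta)$, which yields the singular Dirac-at-the-vertex term. Collecting the three pieces gives~\eqref{eq5.10}.

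To pass from~\eqref{eq5.10} to the alternative form~\eqref{eq5.11}, I would only rewrite the Dirichlet part. By~\eqref{eq1.22} we have $r^D_\gl=r_\gl-r_{v,\gl}$, and by~\eqref{eq1.22b} the kernel $r_{v,\gl}$ is diagonal in the edge indices, $r_{v,\gl}(\xi,\eta)=\sum_{k=1}^n e_{\gl,k}(\xi)\,\tfrac{1}{\sgl}\,e_{\gl,k}(\eta)$. Hence $-r_{v,\gl}$ contributes exactly the $-\gd_{km}$ part of a double sum of the shape $\sum_{k,m}e_{\gl,k}(\xi)\,(\cdots)\,\tfrac{1}{\sgl}\,e_{\gl,m}(\eta)$. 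Factoring $\tfrac1{\sgl}$ out of the absolutely continuous double sum in~\eqref{eq5.10}, so that its coefficient becomes $2\sgl\,\rho(\gl)\,w_m$, and adding this diagonal contribution gives the combined coefficient $\Sg_{km}(\gl)=2\sgl\,\rho(\gl)\,w_m-\gd_{km}$ of~\eqref{eq5.11b}, leaving the $r_\gl$ term and the $\epsilon_v$ term untouched.

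The computation is entirely mechanical; the only point that requires care is the bookkeeping at the vertex. The identity $e_\gl(\xi)=\sum_k e_{\gl,k}(\xi)$ fails pointwise at $\xi=v$, where the left side equals $1$ but the right side equals $n$; this discrepancy, however, lives on the Lebesgue-null vertex and is irrelevant for the absolutely continuous part of the kernel, while the genuine contribution of the vertex is carried separately and unambiguously by the $\epsilon_v(d\eta)$ term arising from $f(v)$. Thus the main, and essentially only, obstacle is to apply the restriction notation of lemma~\ref{lem2.3} consistently, so that the diagonal structure of $r_{v,\gl}$ in~\eqref{eq1.22b} lines up with the $\gd_{km}$ in the scattering matrix and no spurious off-diagonal terms appear.
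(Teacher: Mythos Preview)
Your proposal is correct and follows exactly the paper's approach: the paper simply states that formula~\eqref{eq5.9} allows one to ``read off'' the kernel, and you have spelled out precisely this reading-off step, together with the passage from~\eqref{eq5.10} to~\eqref{eq5.11} via $r^D_\gl=r_\gl-r_{v,\gl}$. Your remark about the harmless ambiguity of the edge decomposition at the vertex is a fair observation that the paper leaves implicit.
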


In order to invert the Laplace transforms in equations~\eqref{eq5.10},
\eqref{eq5.11}, we define for $\gb$, $\gg>0$, the following function $g_{\gb,\gg}$
on $(0,+\infty)\times \R_+$:
\begin{equation}    \label{eq5.12}
    g_{\gb,\gg}(t,x)
        = \frac{1}{\gg^2}\,\frac{1}{\sqrt{2\pi}}\,\int_0^t \frac{s+\gg x}{(t-s)^{3/2}}\,
            \exp\Bigl(-\frac{(s+\gg x)^2}{2\gg^2(t-s)}\Bigr)\,e^{-\gb s/\gg}\,ds,
\end{equation}
with $(t,x)\in(0,+\infty)\times\R_+$. The heat kernel $g_{\gb,\gg}$ is discussed in
more detail in appendix~C of~\cite{BMMG0}. In particular, it is
outlined there that the limits of $g_{\gb,\gg}$ as $\gb\downarrow 0$, and
$\gg\downarrow 0$, yield the kernels $g_{\gb,0}$ (equation~\eqref{eq3.10}) and
$g_{0,\gg}$ (equation~\eqref{eq4.15}), respectively. Moreover, it is proved there
that the Laplace transform of $g_{\gb,\gg}(\cdot,x)$, $x\ge 0$, is given by
\begin{equation}    \label{eq5.13}
    \rho(\gl)\,e^{-\sgl x},\qquad \gl>0,
\end{equation}
where $\rho$ is defined in~\eqref{eq5.3b}. Hence we get the

\begin{corollary}   \label{cor5.6}
For $\xi$, $\eta\in\cG$, $t>0$, the transition kernel of the general Brownian
motion $\Wg$ on $\cG$ is given  by
\begin{equation}    \label{eq5.14}
\begin{split}
    \pg(t,\xi,d\eta)
        &= p^D(t,\xi,\eta)\,d\eta\\
        &\hspace{2em} +\sum_{k,m=1}^n \onel(\xi)\,2w_m\,g_{\gb,\gg}\bigl(t,d_v(\xi,\eta)\bigr)\,
                                \onel[m](\eta)\,d\eta\\
        &\hspace{2em} + \gg\,g_{\gb,\gg}\bigl(t,d(\xi,v)\bigr)\,\gep_v(d\eta),
\end{split}
\end{equation}
which alternatively can be written as
\begin{equation}    \label{eq5.15}
\begin{split}
    \pg(t,\xi,d\eta)
        &= p(t,\xi,\eta)\,d\eta\\
        &\hspace{2em} +\sum_{k,m=1}^n \onel(\xi)\,\Bigl(2w_m\,g_{\gb,\gg}\bigl(t,d_v(\xi,\eta)\bigr)\\
        &\hspace{10em} -\gd_{km}\,g\bigl(t,d_v(\xi,\eta)\bigr)\Bigl)\,
                                \onel[m](\eta)\,d\eta\\
        &\hspace{2em} + \gg\,g_{\gb,\gg}\bigl(t,d(\xi,v)\bigr)\,\gep_v(d\eta).
\end{split}
\end{equation}
\end{corollary}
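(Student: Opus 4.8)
The plan is to obtain both formulae by inverting, term by term in the time variable, the Laplace-transform representations of the resolvent kernel established in corollary~\ref{cor5.5}. Indeed, by definition $\rg_\gl(\xi,d\eta)=\int_0^\infty e^{-\gl t}\,\pg(t,\xi,d\eta)\,dt$, so it suffices to recognize each summand of~\eqref{eq5.10} (respectively~\eqref{eq5.11}) as the Laplace transform of the corresponding summand of~\eqref{eq5.14} (respectively~\eqref{eq5.15}). Since only finitely many terms occur, each with an explicitly known transform, the inversion is legitimate and reduces to matching known pairs; note also that the inversion acts only on the $t$-dependence and leaves the measure structure in $\eta$ (the absolutely continuous part $d\eta$ and the atom $\gep_v(d\eta)$) intact.

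First I would treat~\eqref{eq5.10}. The leading term $r^D_\gl(\xi,\eta)$ is, as recorded just after~\eqref{eq1.22b}, the Laplace transform of the Dirichlet heat kernel $p^D$ of~\eqref{eq1.20}, so it inverts to the first term of~\eqref{eq5.14}. For the second term I would use~\eqref{eq1.14} together with~\eqref{eq1.15} to write
\begin{equation*}
    e_{\gl,k}(\xi)\,e_{\gl,m}(\eta)
        = \onel(\xi)\,e^{-\sgl\,d_v(\xi,\eta)}\,\onel[m](\eta),
\end{equation*}
so that each summand equals $\onel(\xi)\,2w_m\,\rho(\gl)\,e^{-\sgl\,d_v(\xi,\eta)}\,\onel[m](\eta)$; applying the transform formula~\eqref{eq5.13} with $x=d_v(\xi,\eta)$ produces $2w_m\,g_{\gb,\gg}\bigl(t,d_v(\xi,\eta)\bigr)$, the second term of~\eqref{eq5.14}. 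For the last term, since $e_\gl(\xi)=e^{-\sgl d(\xi,v)}$, formula~\eqref{eq5.13} with $x=d(\xi,v)$ shows that $\rho(\gl)\,e_\gl(\xi)$ is the Laplace transform of $g_{\gb,\gg}\bigl(t,d(\xi,v)\bigr)$, and multiplication by $\gg$ and by the time-independent measure $\gep_v(d\eta)$ yields the third term of~\eqref{eq5.14}.

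The alternative formula~\eqref{eq5.15} follows in the same way from~\eqref{eq5.11}, the only changes being that the leading term is now $r_\gl(\xi,\eta)$, the Laplace transform of the free kernel $p$ of~\eqref{eq1.16}, and that the scattering term must be split. Inserting $\Sg_{km}(\gl)=2\sgl\,\rho(\gl)\,w_m-\gd_{km}$ from~\eqref{eq5.11b}, the contribution $2\sgl\,\rho(\gl)\,w_m\cdot(\sgl)^{-1}=2w_m\,\rho(\gl)$ inverts exactly as above to $2w_m\,g_{\gb,\gg}\bigl(t,d_v(\xi,\eta)\bigr)$, while the remaining contribution $-\gd_{km}\,(\sgl)^{-1}\,e^{-\sgl d_v(\xi,\eta)}$ inverts, via the elementary transform $\cL g(\,\cdot\,,x)(\gl)=(\sgl)^{-1}e^{-\sgl x}$ recorded just after~\eqref{eq5.13}, to $-\gd_{km}\,g\bigl(t,d_v(\xi,\eta)\bigr)$. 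The third term is unchanged, and this reproduces~\eqref{eq5.15}.

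There is no genuine analytic difficulty: the argument is entirely a matching of finitely many known Laplace-transform pairs. The only non-elementary ingredient is the identity~\eqref{eq5.13} for the heat kernel $g_{\gb,\gg}$ defined in~\eqref{eq5.12}, whose derivation is deferred to appendix~C of~\cite{BMMG0}; granting it, the term-by-term inversion outlined above is immediate. As a consistency check I would finally verify that the limits $\gb\downarrow 0$ and $\gg\downarrow 0$ of~\eqref{eq5.14} recover the transition kernels obtained in lemma~\ref{lem3.6} and corollary~\ref{cor4.8}, which is guaranteed by the corresponding limits of $g_{\gb,\gg}$ stated after~\eqref{eq5.12}.
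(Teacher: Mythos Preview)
Your proposal is correct and follows essentially the same approach as the paper: the corollary is obtained by term-by-term inversion of the Laplace transforms in corollary~\ref{cor5.5}, the key ingredient being the transform pair~\eqref{eq5.13} for $g_{\gb,\gg}$. Your write-up is in fact more detailed than the paper's, which simply states ``Hence we get the'' after recording~\eqref{eq5.13}; one minor slip is that the identity $\cL g(\,\cdot\,,x)(\gl)=(\sgl)^{-1}e^{-\sgl x}$ is displayed after~\eqref{eq4.16}, not after~\eqref{eq5.13}.
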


\providecommand{\bysame}{\leavevmode\hbox to3em{\hrulefill}\thinspace}
\providecommand{\MR}{\relax\ifhmode\unskip\space\fi MR }
\providecommand{\MRhref}[2]{%
  \href{http://www.ams.org/mathscinet-getitem?mr=#1}{#2}
}
\providecommand{\href}[2]{#2}

\end{document}